\theoremstyle{plain}
\newtheorem*{thm*}{Theorem}
\newtheorem{thm}{Theorem}
\newtheorem{lemma}{Lemma}[section]
\newtheorem{prop}[lemma]{Proposition}
\newtheorem{claim}[lemma]{Claim}
\newtheorem{cor}[lemma]{Corollary}
\theoremstyle{definition}
\newtheorem{defin}[lemma]{Definition}
\newtheorem{rem}[lemma]{Remark}
\newtheorem{example}[lemma]{Example}
\newtheorem{conj}[lemma]{Conjecture}
\newtheorem{exam}[lemma]{Example}
\newcommand{\R}{{\mathbb{R}}}
\newcommand{\Z}{{\mathbb{Z}}}
\newcommand{\N}{{\mathbb{N}}}
\newcommand{\C}{{\mathbb{C}}}
\newcommand{\cA}{{\mathcal{A}}}
\newcommand{\cC}{{\mathcal{C}}}
\newcommand{\cL}{{\mathcal{L}}}
\newcommand{\cN}{{\mathcal{N}}}
\newcommand{\cP}{{\mathcal{P}}}
\newcommand{\cU}{{\mathcal{U}}}
\def\id{{1\hskip-2.5pt{\rm l}}}
\newcommand{\indCZ}{{\operatorname{CZ}}}
\newcommand{\indRS}{{\operatorname{RS}}}
\newcommand{\Sp}{{\operatorname{Sp}}}
\newcommand{\spec}{{\operatorname{Spec}}}
\newcommand{\sign}{{\operatorname{sign}\,}}
\newcommand{\im}{{\operatorname{Im}\,}}
\begin{document}
\title{A max inequality for spectral invariants of disjointly supported Hamiltonians.}
\author{Shira Tanny}
\maketitle
\begin{abstract}
	We study the relation between spectral invariants of disjointly supported Hamiltonians and of their sum. On aspherical manifolds, such a relation was established by Humili\`ere, Le Roux and Seyfaddini. We show that a weaker statement holds in a wider setting, and derive applications to Polterovich's Poisson bracket invariant and to Entov and Polterovich's notion of superheavy sets.	
\end{abstract}

\tableofcontents
\section{Introduction and results.}

Hamiltonian spectral invariants on closed symplectic manifolds were introduced by Oh and Schwartz \cite{oh2005construction,schwarz2000action}. These invariants assign to each Hamiltonian $H:M\times S^1\rightarrow\R$ and a non-zero quantum homology class $\alpha\in QH_*(M)$ a real number, denoted by $c(H;\alpha)$. In this paper we consider only spectral invariants with respect to the fundamental class, and therefore abbreviate $c(\cdot):=c(\cdot;[M])$.
Spectral invariants have been widely studied and have many applications in symplectic  geometry. 
One relevant application concerns lower bounds for Polterovich's Poisson bracket invariant, which was introduced in \cite{polterovich2012quantum,polterovich2014symplectic}. Given a finite open cover $\{U_i\}_{i=1}^N$ of a closed symplectic manifold, the Poisson bracket invariant of $\{U_i\}$ is defined by
\begin{equation*}
pb(\{U_i\}):=\inf_{\{f_i\}}\ \max_{|x_j|,|y_k|\leq 1}\left\{\sum_j x_j f_j, \sum_k y_k f_k\right\},
\end{equation*} 
where the infimum is taken over all smooth partitions of unity that are subordinate\footnote{A partition of unity is a collection of non-negative functions that sum up to 1. We say that $\{f_i\}$ is subordinate to $\{U_i\}$ if $supp(f_i)\subset U_i$ for each $i$.} to the cover $\{U_i\}$. 
Polterovich explained the relation of this invariant to quantum mechanics and conjectured a lower bound for it, in terms of the displacement energies of the sets. Moreover, he showed how upper bounds for the spectral invariants of sums of disjointly supported Hamiltonians can be used to establish lower bounds for $pb$.  This inspired several works studying upper bounds for the spectral invariant of a sum of disjointly supported Hamiltonians: In  \cite{polterovich2014symplectic}, Polterovich produced upper bounds for Hamiltonians supported in certain domains on symplectically aspherical manifolds, namely when both the symplectic form $\omega$ and the first Chern class $c_1$ vanish on $\pi_2(M)$. Later, in \cite{seyfaddini2014spectral}, Seyfaddini constructed so called {\it spectral killers} and bounded the spectral invariant of a sum of Hamiltonians supported in disjoint small balls on monotone manifolds, i.e., when $\omega$ is proportional to $c_1$. In \cite{ishikawa2015spectral}, Ishikawa considered Hamiltonians supported in symplectic embeddings of strongly convex sets in $\R^{2n}$, into monotone manifolds. 
Finally, in \cite{humiliere2016towards}, Humili\`ere, Le Roux and Seyfaddini proved that, on symplectically aspherical manifolds, the spectral invariant of a sum of Hamiltonians supported in certain disjoint domains, is equal to the maximum over the spectral invariants of the Hamiltonians:
\begin{thm*}[Humili\`ere-Le Roux-Seyfaddini, \cite{humiliere2016towards}]
	Let $H_1,\dots,H_N$ be  Hamiltonians supported in disjoint incompressible Liouville domains in a symplectically aspherical manifold. Then,
	\begin{equation*}
	c(H_1+\cdots+H_N)=\max\{c(H_1),\dots,c(H_N)\}.
	\end{equation*}
\end{thm*}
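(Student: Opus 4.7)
The plan is to prove both inequalities $c(H) \le \max_i c(H_i)$ and $c(H) \ge \max_i c(H_i)$ for $H := H_1 + \cdots + H_N$. By induction on $N$ it suffices to treat $N = 2$, so fix disjoint incompressible Liouville domains $U_1, U_2$ containing $\supp H_1, \supp H_2$, and set $H := H_1 + H_2$.

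The first step is an orbit--action analysis. Since the supports are disjoint, the flows of $H_1$ and $H_2$ commute, and every contractible 1-periodic orbit of $H$ is of one of three types: an orbit of $H_1$ lying in $U_1$, an orbit of $H_2$ lying in $U_2$, or a constant at a point where $H \equiv 0$. Incompressibility lets us cap an orbit by a disc inside the ambient $U_i$, and the aspherical hypothesis makes $\mathcal{A}_H$ independent of the capping. A direct computation then gives $\mathcal{A}_H(\gamma) = \mathcal{A}_{H_i}(\gamma)$ for orbits in $U_i$ and $\mathcal{A}_H(\gamma) = 0$ for the outside constants.

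The crux is a chain-level splitting of the Floer complex. Choosing an $\omega$-compatible almost complex structure $J$ that is of contact type near $\partial U_1, \partial U_2$, I would invoke the integrated ``no-escape'' maximum principle (in the style of Abouzaid--Seidel) to prevent Floer cylinders for $H$ from crossing either boundary $\partial U_i$. Combined with the absence of sphere bubbles in the aspherical setting, this should yield a direct sum decomposition
\[
CF_*(H) \;=\; CF_*^{U_1} \,\oplus\, CF_*^{U_2} \,\oplus\, CF_*^{\mathrm{out}},
\]
whose first two summands are canonically identified, as action-filtered complexes, with the $U_i$-subcomplexes of $CF_*(H_i)$, and whose third summand is generated by the outside constants, all at action $0$. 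A continuation from $H$ to $H_i$ obtained by turning off $H_{3-i}$, confined by the same maximum principle, then transports $[M]_H$ to $[M]_{H_i}$ and matches components under the respective splittings. Applying the spectral minimax summand by summand yields $c(H) = \max_i c(H_i)$.

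The main obstacle will be the Floer-theoretic splitting: one must rigorously rule out Floer cylinders for $H$ that cross a boundary $\partial U_i$, and verify that the resulting decomposition of $CF_*(H)$ is respected by the PSS and continuation maps used in the definition of $c$. While $X_H$ vanishes on $\partial U_i$ so the $J$-holomorphic maximum principle applies there, making this water-tight requires both the Liouville structure (to set up the maximum principle) and the incompressibility hypothesis (to confine cappings inside each $U_i$ so that the action identities above hold). The remaining steps --- the orbit analysis, the action identities, and the closing minimax --- are routine once the splitting is secured.
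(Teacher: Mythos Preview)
The paper does not actually prove this theorem; it is quoted from \cite{humiliere2016towards} as background. What the paper does prove in the aspherical setting (Section~4.1) is only the inequality $c(\sum H_i)\le\max_i c(H_i)$, and by a completely different route: it observes that on aspherical manifolds $\spec_n(\partial U;M)\subset(-\infty,0]$, so Proposition~\ref{pro:killer1} produces a spectral killer $K_i$ supported in $U_i$ with $\|K_i\|_{C^0}=c(H_i)$ and $c(H_i+K_i)=0$, and then Claim~\ref{clm:max_ineq} gives the bound via stability and subadditivity. No Floer cylinders are confined, no chain-level splitting is used, and the reverse inequality $c(\sum H_i)\ge\max_i c(H_i)$ is not addressed at all by this method.

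Your proposal, by contrast, follows the strategy of the original proof in \cite{humiliere2016towards} (and the alternative in \cite{ganor2020floer}): confine Floer trajectories to the individual $U_i$ via a no-escape argument, split the Floer complex accordingly, and read off the equality. This is a genuinely different approach from the paper's spectral-killer machinery, and it is the one that yields the full equality rather than just the $\le$ direction. The outline you give is correct in spirit; the one place I would caution you is the final step, where you say continuation maps ``transport $[M]_H$ to $[M]_{H_i}$ and match components under the respective splittings.'' Extracting both inequalities from the splitting requires more than just moving $[M]$ around: you need to argue that a cycle representing $[M]$ in $CF_*(H)$ must have a nonzero component in each summand realizing $c(H_i)$ (for $\ge$), and conversely that one can assemble such a representative from the pieces (for $\le$). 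This is where the actual work in \cite{humiliere2016towards} lies, and your sketch elides it.
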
 
This result is referred to as the ``max formula" for spectral invariants.
{An alternative proof for the max formula, as well as an inequality for spectral invariants with respect to a general homology class, were given in \cite{ganor2020floer}.}
Humili\`ere, Le Roux and Seyfaddini also showed that the max formula does not hold on the sphere, by constructing Hamiltonians $H_1$ and $H_2$, supported in disjoint disks on $S^2$, for which $c(H_1+H_2)<\max\{c(H_1),c(H_2)\}$. A natural question is whether an inequality holds in general. In what follows, we consider disjointly supported Hamiltonians $H_1,\dots, H_N$ on a closed connected symplectic manifold $(M,\omega)$, and show that under certain conditions one has 
\begin{equation}\label{eq:max_ineq}
c(H_1+\cdots+H_N)\leq \max\{c(H_1),\dots,c(H_N)\}.
\end{equation}
The main ingredient of the proof is the construction of spectral killers, in the spirit of Seyfaddini \cite{seyfaddini2014spectral}. We change Seyfaddini's construction in order to prove a max inequality, as well as extend it to a more general setting.

\subsection{The max inequality for disjointly supported Hamiltonians.}
Let $(M,\omega)$ be a closed symplectic manifold. Throughout the paper, we consider Hamiltonians supported in domains satisfying certain conditions. These domains include, for example, symplectic embeddings into $M$ of star-shaped domains in $\R^{2n}$ with smooth boundaries and such that the radial vector field is transverse to the boundary (following \cite{gutt2018symplectic}, we call such domains ``nice star-shaped domains"). In order to describe the class of relevant domains in full generality, let us recall a few standard notions.
A domain $U\subset M$ has a {\it contact type boundary} if there exists a vector field $Y$, called {\it the Liouville vector field}, that is defined on a neighborhood of the boundary $\partial U$, satisfies $\cL_Y\omega=\omega$, is transverse to the boundary, and points outwards.
In this case, $\lambda:=\iota_Y\omega$ is a primitive of $\omega$ and its restriction to $\partial U$ is called the {\it contact form} associated to $Y$. The {\it Reeb vector field} $R$ on $\partial U$ is defined by the equations
$$\omega(Y,R)=1,\quad \omega(R,\cdot)|_{T\partial U}=0.$$
The flow $\varphi_R^t:\partial U\rightarrow\partial U$ of $R$ is called the {\it  Reeb flow}, and we denote the set of its contractible periodic orbits (of any period) by $\cP(\partial U)$.  The action of a periodic Reeb orbit $\gamma\in\cP(\partial U)$ is given by $\int_\gamma \lambda$. The {\it action spectrum} of $\partial U$ is  
$$
\spec(\partial U) := \left\{\int_\gamma\lambda:\gamma\in \cP(\partial U)\right\}.
$$
Finally, the boundary $\partial U \subset M$ is called {\it incompressible} if the map $\pi_1(\partial U)\rightarrow\pi_1(M)$ induced by the inclusion is injective. In particular, when $\partial U$ is simply connected, it  is incompressible. 

We prove that the max inequality (\ref{eq:max_ineq}) holds for Hamiltonians supported in disjoint domains with incompressible contact type boundaries, under additional conditions on the Hamiltonians and the domains, which depend on whether the symplectic manifold is positively monotone, negatively monotone or rational. For the sake of convenience we assume from now on that $\dim M=2n$ is greater than 2, unless stated otherwise. The max formula proved by Humili\`ere, Le Roux and Seyfaddini holds for all symplectic surfaces other than the sphere. We discuss the max inequality on the sphere in Section~\ref{subsec:pos_mon_intro} which concerns positively monotone manifolds. Let us start by describing the results on rational symplectic manifolds.

\subsubsection{Rational manifolds.}
Let $(M,\omega)$ be a closed rational symplectic manifold, namely $\omega(\pi_2(M))=\kappa\Z$ for some $\kappa\in \R$.
{It is simpler to establish a max inequality if the disjoint supports are ``far enough" from one another. In order to make this condition precise consider the following definition.

\begin{defin}\label{def:extendable}
	Let $U\subset M$ be a domain with a contact type boundary. We say that $U$ is {\it $\sigma$-extendable}, for $\sigma>0$, if the  flow $\psi^\tau$ of the Liouville vector field $Y$ exists for all time $0<\tau<\log(1+\sigma)$. The {\it $\sigma$-extension} of such a domain $U$ is  defined to be
	\begin{equation*}
	(1+\sigma) U:=U\cup \left(\bigcup_{\tau\in[0,\log(1+\sigma)]}\psi^\tau\partial U\right).
	\end{equation*}
\end{defin}
See Figure~\ref{fig:extendable} for an illustration of a $\sigma$-extendable domain. We remark that for every domain $U$ with a contact type boundary, there exists $\varepsilon>0$ such that $U$ is $\varepsilon$-extendable, see Section~\ref{subsec:preliminaries_Liouville_coord_and_Reeb_flow}.
\begin{figure}
	\centering
	\includegraphics[scale=0.9]{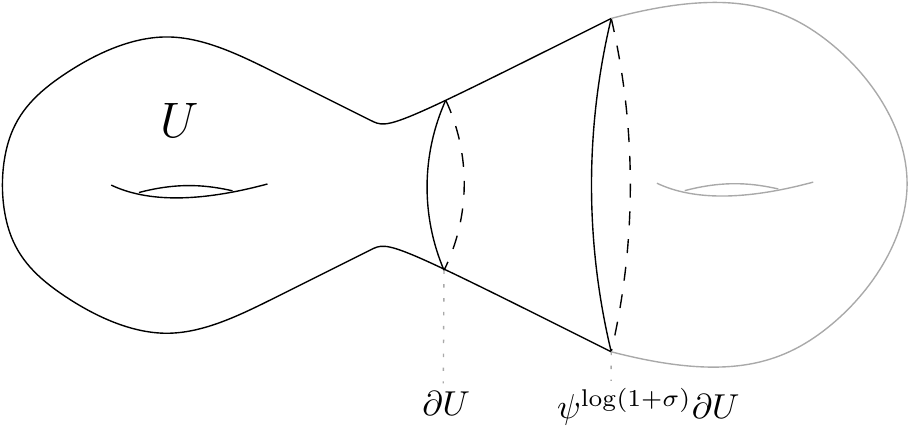}
	\caption{\small{An illustration of a $\sigma$-extendable domain with a contact type boundary. 
	}}
	\label{fig:extendable}
\end{figure}
\begin{exam}
 {Suppose that the ball $B$ of radius $r$ around the origin in $\R^{2n}$ (endowed with the standard symplectic form) embeds into $M$. Then the restriction of this embedding to the ball of radius $r/\sqrt{2}$ in $B$ is a $1$-extendable domain.} 
\end{exam}

The following theorem asserts that the max inequality holds for Hamiltonians which are supported in extendable domains with disjoint extensions (see Figure~\ref{fig:disjoint_extendable}), and whose spectral invariants are small compared to the ``size" of the extensions.

\begin{thm}\label{thm:max_ineq_rational_extendable}
	Let $U_i$ be $\sigma_i$-extendable domains with incompressible contact type boundaries, such that the extensions $\{(1+\sigma_i) U_i\}$ are pairwise disjoint.  Then, for Hamiltonians $H_i$ supported in $U_i$, such that $c(H_i)< \min\{\kappa, \sigma_i\cdot \min \spec(\partial U_i)\}$ it holds that 
	\begin{equation*}
	c(H_1+\cdots+H_N)\leq \max\{c(H_1),\dots,c(H_N)\}.
	\end{equation*}
\end{thm}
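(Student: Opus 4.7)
The plan is to adapt Seyfaddini's spectral killer technique \cite{seyfaddini2014spectral} to this setting, constructing a Hamiltonian $F$ supported in the disjoint annular shells $(1+\sigma_i)U_i\setminus U_i$ that allows one to estimate the spectral invariant of $H := H_1+\cdots+H_N$ in terms of $c^*:=\max_i c(H_i)$. The overall structure is to establish $c(H)\leq c(H+F)\leq c^*$: the first inequality by continuation, the second by exhibiting a Floer cycle for $[M]$ with action at most $c^*$.

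Concretely, I first use $\sigma_i$-extendability to identify a neighborhood of $\partial U_i$ inside $(1+\sigma_i)U_i$ with $\partial U_i\times[0,\log(1+\sigma_i))$, equipped with $\omega=d(e^r\lambda_i)$, where $\lambda_i$ is the contact form. In these coordinates, I set $F_i:=\rho_i(r)$ to be an autonomous radial Hamiltonian, nonnegative, supported in the interior of the shell, with slope $\rho_i'$ arranged to avoid $\spec(\partial U_i)$ except at a single prescribed value. Setting $F:=\sum_i F_i$, the $1$-periodic orbits of $H+F$ split into three types: orbits of $H_i$ inside $U_i$; pairs $(\gamma,r)$ on the $i$-th shell with $\gamma\in\cP(\partial U_i)$ of period $\rho_i'(r)$ and action of the form $e^r\int_\gamma\lambda_i-\rho_i(r)$; and constant orbits outside $\bigsqcup_i(1+\sigma_i)U_i$ with action $0$. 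The hypothesis $c(H_i)<\sigma_i\min\spec(\partial U_i)$ is used precisely to ensure that the shell orbit actions lie outside the window containing $c^*$, while incompressibility of $\partial U_i$ matches contractibility of orbits in $\partial U_i$ with contractibility in $M$ so that capping disks can be controlled.

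The inequality $c(H)\leq c(H+F)$ follows from a Floer continuation $H\rightsquigarrow H+F$ combined with the Lipschitz estimate $|c(H+sF)-c(H)|\leq\osc(sF)$ and the discreteness of the action spectrum modulo $\kappa\Z$: the bound $c(H_i)<\kappa$ prevents the spectral invariant from jumping by a positive multiple of $\kappa$ along the homotopy. For the bound $c(H+F)\leq c^*$, the disjointness of the supports is used to decompose a PSS-type representative of $[M]$ into pieces supported in each $(1+\sigma_i)U_i$, and the killer's action profile forces the representing cycle to concentrate inside the $U_{i_0}$ achieving $c^*$, at action $c(H_{i_0})=c^*$. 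The main obstacle is this last step: one needs to rigorously control the Floer differential on $H+F$ in the rational but not aspherical setting, ruling out action shifts produced by sphere bubbles via the hypothesis $c(H_i)<\kappa$, and this, combined with the incompressibility condition handling cappings of shell orbits, forms the technical heart of the argument.
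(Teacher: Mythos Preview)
Your approach has a genuine gap, and it diverges from the paper's in a way that makes the argument much harder than necessary.

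The paper's proof is elementary once the right spectral killer is chosen. For each $i$ it builds a \emph{negative} radial Hamiltonian $K_i$ supported in $(1+\sigma_i)U_i$, equal to $-c(H_i)$ on $U_i$ and increasing to $0$ across the shell with slope bounded by $c(H_i)/\sigma_i$. The hypothesis $c(H_i)<\sigma_i\cdot\min\spec(\partial U_i)$ guarantees this slope lies strictly below the minimal Reeb period, so $K_i$ has \emph{no} non-constant periodic orbits at all. The bifurcation diagram of the family $H_i+\tau K_i/c(H_i)$ then consists only of slope $-1$ lines (orbits in $U_i$) and horizontal lines at heights in $\kappa\Z$ (constant orbits outside); since $c(H_i)<\kappa$, the spectral invariant slides down a single slope $-1$ line to $0$, giving $c(H_i+K_i)=0$. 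Seyfaddini's inequality then gives $c(\sum H_i)\leq\max_i\|K_i\|_{C^0}=\max_i c(H_i)$ directly.

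Your proposal instead takes $F_i\geq 0$ supported in the shell. Since $H+F\geq H$ pointwise, the inequality $c(H)\leq c(H+F)$ is immediate from monotonicity; your continuation/discreteness justification for it is both unnecessary and confused. The real problem is the bound $c(H+F)\leq c^*$. Adding a nonnegative bump can only raise actions or create new high-action shell orbits, so there is no a priori reason the fundamental class should be carried at action $\leq c^*$. Your ``PSS-type decomposition'' forcing the carrier to concentrate in $U_{i_0}$ is asserted, not argued; taken at face value it would prove the max \emph{formula}, which is known to fail in this non-aspherical setting. You also allow shell orbits (``except at a single prescribed value''), forfeiting exactly the simplification the slope hypothesis is designed to give. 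The missing idea is to kill each $c(H_i)$ down to zero with a negative, slow-slope function and then invoke the stability/subadditivity trick, rather than attempting a direct Floer-complex localization.
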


\begin{figure}
	\centering
	\includegraphics[scale=0.9]{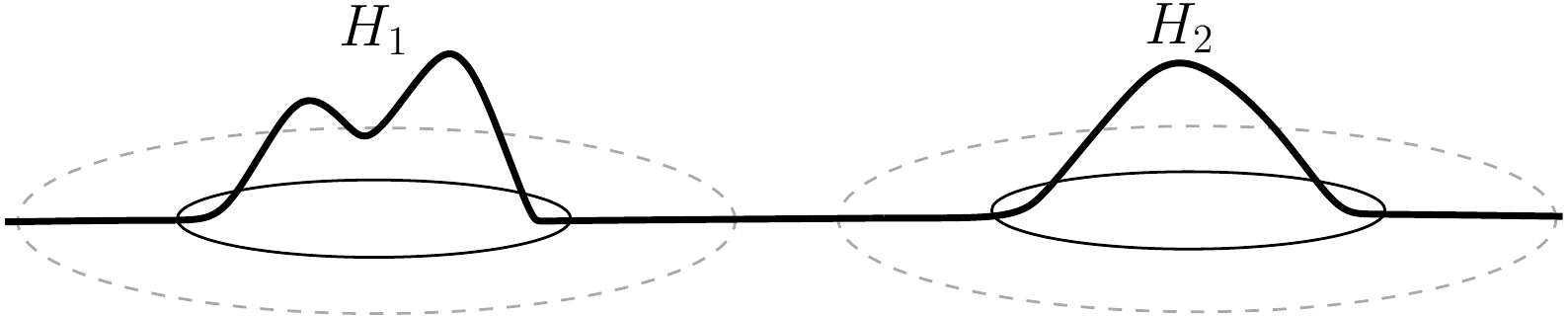}
	\caption{\small{An illustration of Hamiltonians supported in extendable domains, such that the extensions are disjoint.
	}}
	\label{fig:disjoint_extendable}
\end{figure}

When the domains containing the supports are not necessarily ``far" from each other, we assume that} the action spectrum of the contact boundaries, $\spec(\partial U)$, is contained in a lattice $T \Z$, such that $T$ divides $\kappa$. Examples for such domains are symplectic embeddings of balls of radius $r$ in $\R^{2n}$ such that $\pi r^2$ divides $\kappa$.
\begin{thm}\label{thm:max_ineq_rational}
	Let $U_i$ be disjoint domains with incompressible contact type boundaries such that $\spec(\partial U_i)\subset T_i\Z$ and  $T_i|\kappa$ for all $i$. Then, for Hamiltonians $H_i$ supported in $U_i$ such that $c(H_i)<T_i$, it holds that
	\begin{equation}\label{eq:sigma_extension}
	c(H_1+\cdots+H_N)\leq \max\{c(H_1),\dots,c(H_N)\}.
	\end{equation}
\end{thm}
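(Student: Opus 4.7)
The plan is to prove the inequality for all $N$ simultaneously via a spectral killer construction adapted from Seyfaddini. Let $i^*$ be an index attaining $c(H_{i^*}) = \max_i c(H_i)$. The strategy is to modify the sum by adding a killer $K$ supported in $\bigcup_{i \neq i^*} U_i$, bound the spectral invariant of the modified sum directly, and then pass back to the original sum via the triangle inequality for spectral invariants.

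For each $i \neq i^*$ I would construct a spectral killer $K_i = K_i^\varepsilon$ supported inside $U_i$, using the inward Liouville collar of $\partial U_i$ (which exists because $\partial U_i$ is of contact type and the Liouville field points outward). The function $K_i$ is a profile in the radial coordinate, engineered so that (a) its nonconstant 1-periodic orbits are in bijection with the closed Reeb orbits on $\partial U_i$ with actions in $T_i\Z$ up to $\varepsilon$-corrections, and (b) the ``reverse'' contribution $c(-K_i)$ is $O(\varepsilon)$. Set $K = \sum_{i \neq i^*} K_i$; by disjointness, $c(-K)$ is still $O(\varepsilon)$.

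Next I would analyze the action spectrum of $\widehat H := \sum_i H_i + K$. Since the $U_i$ are pairwise disjoint, the summands pairwise Poisson-commute and every 1-periodic orbit of $\widehat H$ is either a constant orbit in $M \setminus \bigcup_i U_i$ (with action in $\kappa\Z$), a 1-periodic orbit of $H_i + K_i$ inside $U_i$ for $i \neq i^*$, or a 1-periodic orbit of $H_{i^*}$ inside $U_{i^*}$. The divisibility conditions $T_i \mid \kappa$ together with the bounds $c(H_i) < T_i$ are precisely what is needed so that the Floer min-max class representing $[M]$ can be localised on an orbit inside some $U_i$, rather than on a constant in the complement recapped by a nontrivial sphere of area $\kappa$. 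Combined with the estimate $c(H_i + K_i) \leq c(H_i) + O(\varepsilon)$ coming from the killer construction, this would give
\[
c(\widehat H) \leq \max_i c(H_i) + O(\varepsilon).
\]

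Finally, the triangle inequality yields $c\bigl(\sum_i H_i\bigr) \leq c(\widehat H) + c(-K) \leq \max_i c(H_i) + O(\varepsilon)$, and sending $\varepsilon \to 0$ completes the proof. The main obstacle is the localisation step: one must rule out that the min-max value is witnessed by a constant orbit in $M \setminus \bigcup_i U_i$ capped by a nontrivial sphere of symplectic area $\kappa$, whose action $k\kappa$ would exceed $\max_i c(H_i)$. The lattice alignment $T_i \mid \kappa$ together with $c(H_i) < T_i$ is the algebraic input that forces any such recapped orbit to sit at or above $\kappa$, while the relevant action window sits strictly below $\kappa$; this is the geometric heart of the theorem.
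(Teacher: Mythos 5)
Your overall architecture (radial spectral killers inside the $U_i$, pass to $\widehat H=\sum_i H_i+K$, then return to $\sum_i H_i$) is the right one, but the final step contains a genuine gap. The inequality $c\bigl(\sum_i H_i\bigr)\leq c(\widehat H)+c(-K)$ is not an available property of spectral invariants here: subadditivity applies to the composition $H\# G$ and reduces to $c(F+G)\leq c(F)+c(G)$ only when $F$ and $G$ are disjointly supported, which $\widehat H$ and $-K$ are not (both live on $U_i$ for $i\neq i^*$, and $\widehat H\#(-K)\neq \sum_i H_i$). What is available is the stability (Lipschitz) property, which gives $c\bigl(\sum_i H_i\bigr)\leq c(\widehat H)+\|K\|_{C^0}$, and the $C^0$-norm of a killer cannot be made small: by stability, $0=c(H_i+K_i)\geq c(H_i)-\|K_i\|_{C^0}$, so necessarily $\|K_i\|_{C^0}\geq c(H_i)$. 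For the same reason your requirement (b) is unattainable: $-K_i$ is a non-negative function of height at least $c(H_i)$, and its spectral invariant is of that order, not $O(\varepsilon)$. As written, your scheme only yields $c\bigl(\sum_i H_i\bigr)\leq c(H_{i^*})+\max_{i\neq i^*}c(H_i)$, i.e.\ twice the maximum.

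The fix is to treat all indices symmetrically and kill $H_{i^*}$ as well: with $c(H_i+K_i)=0$ and $\|K_i\|_{C^0}=c(H_i)$ for \emph{every} $i$, stability followed by subadditivity for the disjointly supported family $\{H_i+K_i\}$ gives $c\bigl(\sum_i H_i\bigr)\leq \sum_i c(H_i+K_i)+\bigl\|\sum_i K_i\bigr\|_{C^0}=\max_i\|K_i\|_{C^0}=\max_i c(H_i)$, which is exactly the max inequality; this is Claim~\ref{clm:max_ineq}. Two further remarks. First, the existence of the killer with $\|K_i\|_{C^0}=c(H_i)$ is where the hypotheses enter, via the bifurcation-diagram argument of Proposition~\ref{pro:killer1} and Remark~\ref{rem:killer_for_degenerate}. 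Second, the dangerous orbits that the lattice condition must exclude are not primarily the constants in $M\setminus\bigcup_i U_i$ recapped by spheres (actions in $\kappa\Z$), but the non-constant orbits created by the killer near $\partial U_i$, whose recapped actions lie in $-\spec(\partial U_i)+\kappa\Z\subset T_i\Z$; it is to keep these horizontal lines out of the window $(0,c(H_i)]$ that one needs both $\spec(\partial U_i)\subset T_i\Z$ with $T_i\mid\kappa$ and $c(H_i)<T_i$. If only the constant orbits mattered, the hypothesis on $\spec(\partial U_i)$ would be superfluous.
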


\subsubsection{Negatively monotone manifolds.}
On negatively monotone manifolds,  namely when $\omega|_{\pi_2(M)} = \kappa\cdot c_1|_{\pi_2(M)}$ for $\kappa\leq 0$, we show that the max inequality (\ref{eq:max_ineq}) holds for Hamiltonians supported in disjoint domains with incompressible contact type boundaries, if we assume in addition that the {\it contact Conley-Zehnder index} of every Reeb orbit is non-negative. The contact Conley-Zehnder index assigns an integer, which we denote by $\indCZ_R(\gamma, u)$, to every periodic Reeb orbit $\gamma\in\cP(\partial U)$ and a capping disk $u\subset \partial U$. This index is well defined when the Reeb flow is {\it non-degenerate}\footnote{We remark that one can perturb the Liouville vector field to make the Reeb flow non-denegerate, see  Section~\ref{sec:preliminaries}.}, and is reviewed in Section~\ref{sec:preliminaries} together with other preliminaries from symplectic and contact geometry.
It is well known that when $U$ is a symplectic  embedding of a strictly  convex domain in $\R^{2n}$ into $M$, the CZ index of every Reeb orbit is non-negative\footnote{In \cite{hofer1998dynamics}, Hofer, Wysocki and Zehnder proved that for every strictly convex domain in $\R^{2n}$ with a smooth boundary, the contact CZ index of every Reeb orbit is at least $n+1$, and in particular is positive (the definitions and proofs are written for $n=2$, see the remark on p.222 for the general case).}. 
\begin{thm}\label{thm:max_ineq_neg_monotone}
	Let $(M,\omega)$ be a negatively monotone symplectic manifold and suppose $U_i\subset M$ are disjoint domains with incompressible contact type boundaries, such that the contact Conley-Zehnder index of every Reeb orbit is non-negative.
	Then, for any collection of Hamiltonians $H_i:M\times S^1\rightarrow\R$ supported in $U_i$ respectively,
	\begin{equation*}
	c(H_1+\cdots+H_N)\leq \max\{c(H_1),\dots,c(H_N)\}.
	\end{equation*}
\end{thm}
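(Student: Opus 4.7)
My plan is to adapt Seyfaddini's spectral killer construction \cite{seyfaddini2014spectral} to the negatively monotone setting; the non-negative contact Conley--Zehnder index hypothesis will play the role that asphericity plays in \cite{humiliere2016towards}, controlling Floer trajectories that could otherwise spoil the argument through sphere bubbling. After a $C^0$-small perturbation of each Liouville vector field $Y_i$ (justified by continuity of spectral invariants and openness of non-degeneracy, and preserving incompressibility, disjointness, and non-negative CZ indices), I may assume every Reeb flow on $\partial U_i$ is non-degenerate. Set $c := \max_i c(H_i)$; the goal is to show $c(H_1 + \cdots + H_N) \leq c$.

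For each $i$ and large parameter $\lambda$, I build a spectral killer $K_i = K_i^\lambda$ supported in $U_i$: in a Liouville collar of $\partial U_i$, $K_i$ is a radial function $f(\tau)$ of the Liouville time $\tau$, decreasing steeply from $0$ at the boundary to $-\lambda$ deeper inside and remaining constant thereafter, with the slopes of $f$ chosen outside $\spec(\partial U_i)$. The 1-periodic orbits of $K_i$ then decompose into constants (in the regions where $K_i$ is locally constant) and pairs of orbits associated to each closed Reeb orbit of $\partial U_i$ of period below the slope. The central technical claim is $c(K_i) = 0$: a Morse-type argument on the constants at level $0$ yields $c(K_i) \leq 0$, and the reverse requires ruling out Floer cycles of strictly negative action representing $[M]$. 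Here the hypotheses enter crucially: a would-be connecting trajectory from a fundamental-class representative to a lower-action generator must, in the negatively monotone setting, involve sphere-bubble contributions, and negative monotonicity (forcing $c_1(u) \leq 0$ for any bubble $u$ with $\omega(u) > 0$) combined with the non-negative CZ index of Reeb orbits on $\partial U_i$ forces the Fredholm index of such a trajectory to be strictly negative.

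With $c(K_i) = 0$ in hand, the triangle inequality gives $c(H_i + K_i) \leq c(H_i) \leq c$ for each $i$. The final step is to argue, via a direct Floer-theoretic comparison, that
\begin{equation*}
c(H_1 + \cdots + H_N) \leq \max_i c(H_i + K_i),
\end{equation*}
exploiting that the killers introduce large action gaps separating the 1-periodic orbits concentrated in distinct $U_i$, so the Floer complex of $\sum_i (H_i + K_i)$ decomposes into blocks indexed by $i$ with no sphere-bubbled trajectories between blocks (by the same index-theoretic argument from the previous step). I expect the main obstacle to be the spectral computation $c(K_i) = 0$: the sign combinatorics of action, CZ index, and capping area under negative monotonicity must be balanced exactly, and the non-negative contact CZ index hypothesis is precisely the condition that makes this balancing work.
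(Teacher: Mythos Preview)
Your proposal has a genuine structural gap: the spectral killer is aimed at the wrong target, and this makes the final step circular.

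In Seyfaddini's scheme (and in the paper's proof), the killer $K_i$ is designed so that $c(H_i+K_i)=0$ and $\|K_i\|_{C^0}=c(H_i)$. One then uses only stability and subadditivity:
\[
c\Big(\sum_i H_i\Big)\leq c\Big(\sum_i (H_i+K_i)\Big)+\Big\|\sum_i K_i\Big\|_{C^0}\leq \sum_i c(H_i+K_i)+\max_i\|K_i\|_{C^0}=\max_i c(H_i).
\]
No Floer-theoretic decomposition of the complex of the sum is needed. By contrast, you aim for $c(K_i)=0$, which only gives $c(H_i+K_i)\leq c(H_i)$, and then you still have to prove a max inequality for the disjointly supported Hamiltonians $G_i:=H_i+K_i$. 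That is exactly the statement of the theorem applied to the $G_i$, so the argument is circular. Your proposed way out---``large action gaps'' produced by large $\lambda$ forcing the Floer complex of $\sum_i G_i$ to split into blocks with no trajectories between them---does not work on a non-aspherical manifold: recapping by spheres shifts actions by arbitrary elements of $\omega(\pi_2(M))$, so no choice of $\lambda$ produces a genuine gap, and the vague index argument you sketch would have to rule out continuation/Floer trajectories between different $U_i$'s, which is precisely the hard, unproven input.

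The paper's proof avoids all of this. It constructs $K_i$ via a bifurcation-diagram argument along the family $H_i+\tau K_1$, and reduces the equality $c(H_i+K_{c(H_i)})=0$ to the purely numerical condition $\spec_n(\partial U_i;M)\cap(0,c(H_i)]=\emptyset$. This is then verified directly: the elements of $\spec_n(\partial U_i;M)$ are of the form $-\int_\gamma\lambda-\omega(A)$ with $c_1(A)=\lceil(-\indCZ_R(\gamma,u_0)-n)/2\rceil\leq 0$ by the non-negative CZ hypothesis, and $-\omega(A)=-\kappa c_1(A)\leq 0$ by negative monotonicity; the $\spec_n(0)$ piece is handled similarly. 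So the hypotheses enter as a two-line index/sign computation, not via a Floer-trajectory exclusion argument.
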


\subsubsection{Positively monotone manifolds.}\label{subsec:pos_mon_intro}
When the symplectic manifold is positively monotone,  namely $\omega|_{\pi_2(M)} = \kappa\cdot c_1|_{\pi_2(M)}$ for $\kappa\geq 0$, we need 
to impose additional assumptions on the domains $U_i$ and the Hamiltonians $H_i$, in order prove the max inequality (\ref{eq:max_ineq}).
The first requirement is that $U_i$ are {\it dynamically convex}
\footnote{{In fact, it is sufficient to assume that the contact CZ index of every Reeb orbit is at least $n$.}}%
, namely, that the contact Conley-Zehnder index of every Reeb orbit (with respect to a capping disk that is contained in the boundary) 
is at least $n+1$, where $n$ is half the dimension of $M$. It is known that every strictly convex domain in $\R^{2n}$ with a smooth boundary is dynamically convex, see, e.g., \cite{hofer1998dynamics}. Secondly, we require that the spectral invariants of the Hamiltonians are smaller than the monotonicity constant, namely, $c(H_i)<\kappa$.
Finally, we assume that the domains $U_i$ are ``not too big" compared to the monotonicity constant $\kappa$. The size of a domain is measured by maximizing the action-index ratio over all Reeb orbits on the boundary: 
\begin{defin}\label{def:ratio_invariant}
	Let $U\subset M$ be a domain with an incompressible contact type boundary, such that the Reeb flow is non-degenerate. 
	We define
	\begin{equation}
	C(U):=\sup\left\{\frac{2\int_{\lambda} \gamma}{\indCZ_R( \gamma, u_0)-n+1}:\gamma\in\cP(\partial U),\ u_0\subset\partial U\right\}\in \R\cup \{+\infty\}.
	\end{equation}
\end{defin}

\begin{rem}\label{rem:CZ_ratio_constant}
	The above definition can be extended to disjoint unions of domains $U=\sqcup_i U_i$. In this case, the invariant will be equal to the maximum over the invariants of the connected components, $C(U) = \max_i C(U_i)$. Definition~\ref{def:ratio_invariant} can be also extended to domains with degenerate Reeb flows. This is done in Section~\ref{sec:the_invariant_C}, together with estimates of the invariant $C$ on certain classes of domains: 
	\begin{itemize}
		\item Using results from \cite{gutt2018symplectic}, we show that for concave toric domains (and, in particular, ellipsoids) $C(U)$ coincides with the Gromov width\footnote{The Gromov width of $U$ is defined to be the supremum of $\pi r^2$ over all radii $r$ such that the ball of radius $r$ in $\R^{2n}$ (equipped with the standard symplectic form) can be symplectically embedded into $U$.}, $c_G(U)$. For convex toric domains, $c_G(U)\leq C(U)\leq c_G(B)$ for every ball $B$ whose image under the moment map contains the image of $U$, namely $\mu(B)\supset \mu(U)$. We review the definitions of convex and concave toric domains in Section~\ref{sec:the_invariant_C}. 
		\item Using a result by Ishikawa from \cite{ishikawa2015spectral}, we show that for strictly convex domains,  $C(U)$ can be bounded in terms of the curvature of the boundary $\partial U$. 
	\end{itemize}
\end{rem}

\begin{thm}\label{thm:max_ineq_pos_monotone}
Let $(M,\omega)$ be a positively monotone symplectic manifold of dimension greater than 2, and with monotonicity constant $\kappa>0$. Suppose that $U_i\subset M$ are disjoint domains with incompressible dynamically convex boundaries, such that $C(U_i)\leq\kappa$ for all $i$. For Hamiltonians $H_i:M\times S^1\rightarrow\R$ supported in $U_i$ respectively, such that $c(H_i)<\kappa$, we have
\begin{equation*}
	c(H_1+\cdots+H_N)\leq \max\{c(H_1),\dots,c(H_N)\}.
\end{equation*}
\end{thm}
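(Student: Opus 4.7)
The plan is to adapt the spectral killer strategy of Seyfaddini \cite{seyfaddini2014spectral} to the positively monotone setting, leveraging dynamical convexity and the hypothesis $C(U_i) \leq \kappa$ to rule out unwanted contributions from recapped Floer orbits. Fix $\epsilon > 0$ such that $a := \max_i c(H_i) + \epsilon < \kappa$; it suffices to show $c(H_1 + \cdots + H_N) \leq a$ and let $\epsilon \searrow 0$.

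For each $i$, I would construct an autonomous Hamiltonian $K_i$ supported in a small Liouville extension of $U_i$, chosen so that the extensions remain pairwise disjoint. The Hamiltonian $K_i$ is shaped as a radial ``step'' in Liouville coordinates: constant and large on a neighborhood of $U_i$, then decreasing to $0$ on a thin collar of $\partial U_i$, with slope transitioning through the action values in $\spec(\partial U_i)$. After a small nondegenerate perturbation, the $1$-periodic orbits of $K_i$ split into (i) constants in the maximum region, (ii) constants outside the support, and (iii) nonconstant orbits arising from Reeb orbits on level hypersurfaces of the collar. Because the supports are disjoint, the Floer complex of $K := \sum_i K_i$ decomposes accordingly, and the task reduces to locating the PSS image of $[M]$ in this complex.

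The critical step is the action-index analysis for the nonconstant orbits. Fix such an orbit $\gamma$ with a capping $u_0 \subset \partial U_i$; dynamical convexity yields a lower bound on the contact Conley--Zehnder index, which translates via the standard formula in a Liouville collar into a lower bound on $\indCZ(\gamma, u_0)$. For $(\gamma, u_0)$ to contribute to the PSS image of $[M]$ after recapping by a sphere class $A \in \pi_2(M)$, one needs $\indCZ(\gamma, u_0) - 2c_1(A) = n$, while the action shifts by $-\kappa c_1(A)$. The hypothesis $C(U_i) \leq \kappa$ is precisely the condition ensuring that no such recapping can push the action above $a$ while maintaining the CZ index at $n$---this is the reason the invariant $C$ is defined in terms of the ratio $2\int_\gamma\lambda / (\indCZ_R - n + 1)$. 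Combining this localization with a monotone Floer continuation from $K$ to $H$ (where $K \geq H$ pointwise after appropriate choice of the constant height) then yields $c(H) \leq c(K) \leq a$.

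The hardest part will be handling pseudoholomorphic sphere bubbling in the continuation moduli spaces along the homotopy between $K$ and $H$: in the positively monotone case, sphere bubbles appear in codimension $2$ and can a priori alter the PSS image through recapped configurations, whereas the analogous issue does not arise in the aspherical or negatively monotone settings. The hypothesis $C(U_i) \leq \kappa$, together with dynamical convexity, should provide exactly the margin needed to rule out bubbled configurations contributing to the PSS chain within the action window $(-\infty, a]$; making this precise will require careful adaptation of Seyfaddini's spectral killer analysis to accommodate recapped orbits in the monotone Floer complex and to verify that the dimension count at CZ index $n$ genuinely forbids the problematic recappings.
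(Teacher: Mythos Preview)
Your approach has a genuine gap. You build a large positive step function $K=\sum_i K_i$ with $K\geq H$ and hope to conclude $c(K)\leq a$ from an action--index analysis of the \emph{nonconstant} collar orbits. But the constant orbits at interior maxima of each $K_i$ also lie in degree $n$ (a Morse maximum has $\indCZ=n-2n=-n$ in the paper's convention) and carry action equal to the height of $K_i$, which must exceed $\max H_i$ and hence can be arbitrarily large. Index bookkeeping alone cannot place the PSS image of $[M]$ below these generators; that would require control of the Floer differential, which you do not supply. At best your scheme would recover the uniform bound $c(K)<C(U)\leq\kappa$ of Theorem~\ref{thm:spec_bound_pos}, not the max inequality you are after.

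The paper proceeds differently and sidesteps this issue entirely. For each $i$ one constructs a \emph{negative} spectral killer $K_i$ supported in $U_i$ with $\|K_i\|_{C^0}=c(H_i)$ and $c(H_i+K_i)=0$ (Proposition~\ref{pro:killer1}); the max inequality then follows from stability and subadditivity via Claim~\ref{clm:max_ineq}, with no continuation map and hence no bubbling to worry about. The equality $c(H_i+K_i)=0$ comes from a bifurcation argument: one deforms $H_i$ through $H_i+\tau K_1$ for $\tau\in[0,c(H_i)]$ and tracks $c$ continuously, and the only obstruction is the possible presence of ``horizontal'' action values in $(0,c(H_i)]$ coming from nonconstant orbits near the outer edge of the collar. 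Your action--index idea is exactly what rules this out, but with the opposite sign of $\chi'$: for a recapped Reeb orbit with $c_1(A)=\big\lceil\tfrac{-\indCZ_R(\hat\gamma,\hat u_0)-n}{2}\big\rceil$, dynamical convexity together with $C(U_i)\leq\kappa$ forces $-\int_{\hat\gamma}\lambda-\omega(A)\geq(n-1)\kappa\geq\kappa$, so $\spec_n(\partial U_i;M)\cap(0,\kappa)=\emptyset$. This is the sole place where the hypotheses $C(U_i)\leq\kappa$ and $\dim M>2$ are used; no new moduli-space analysis is required.
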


The condition $c(H_i)<\kappa$ in the above theorem can be guaranteed if, for example, the supports are displaceable with small displacement energy, as follows from the energy-capacity inequality. This inequality, as well as  the definitions for displaceability and displacement energy, are stated in  Section~\ref{sec:preliminaries}. Theorem~\ref{thm:spec_bound_pos} below states that, in the setting of Theorem~\ref{thm:max_ineq_pos_monotone}, the condition $c(H_i)<\kappa$ holds if we assume in addition that $U_i$ are {\it portable Liouville domains}:
\begin{defin}[Following \cite{polterovich2014symplectic}] \label{def:portable}
	\begin{itemize}
		\item A domain $U$ with a contact type boundary is called a {\it Liouville domain} if the Liouville vector field $Y$ extends to $U$
		and satisfies $\cL_Y\omega=\omega$ there. 
		\item The  {\it core} of a Liouville domain $U$ is defined to be $Q:=\cap_{s\in(0,1]} \psi^{\log s}U$, where $\{\psi^\tau\}_{\tau\leq 0}$ is the flow of the Liouville vector field.
		\item A Liouville domain $U$ is called {\it portable} if $Q$ is displaceable in $U$.
		\item The {\it portability number} of $U$ is defined to be
		\begin{equation}
		p(U):=\lim_{s\rightarrow 0} e( \psi^{\log s}U;U)/s,
		\end{equation}
		where $e( \psi^{\log s}U;U)$ is the displacement energy of $ \psi^{\log s}U$ inside $U$.
	\end{itemize}
\end{defin}
\begin{example}
	Every nice star-shaped domain in $(\R^{2n},\omega_0)$ is a portable Liouville domain, and its portability number is equal to its displacement energy, $p(U)=e(U;\R^{2n})$.
\end{example}

The next corollary follows from Theorem~\ref{thm:max_ineq_pos_monotone} together with Theorem~\ref{thm:spec_bound_pos} below, which gives an upper bound for the spectral invariants of Hamiltonians supported in portable Liouville domains with dynamically convex incompressible boundaries.
\begin{cor}\label{cor:max_ineq_pos_mon}
	Let $(M,\omega)$ be a positively monotone manifold with monotonicity constant $\kappa$, of dimension greated than 2. Suppose that $U_i\subset M$ are disjoint portable Liouville domains with incompressible dynamically convex boundaries, such that $C(U_i)\leq\kappa$ for all $i$. Then, for any collection of Hamiltonians $H_i$ supported in $U_i$ respectively, the max inequality holds: 
	\begin{equation*}
	c(H_1+\cdots+H_N)\leq \max\{c(H_1),\dots,c(H_N)\}.
	\end{equation*}
\end{cor}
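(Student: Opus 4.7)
The plan is to deduce the corollary from two ingredients: Theorem~\ref{thm:max_ineq_pos_monotone} and the (forthcoming) Theorem~\ref{thm:spec_bound_pos}. The corollary differs from Theorem~\ref{thm:max_ineq_pos_monotone} only in that the hypothesis $c(H_i)<\kappa$ has been replaced by the geometric hypothesis that the $U_i$ are portable Liouville domains. So once we upgrade portability to a spectral bound, the max inequality is immediate.

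First I would verify that each $H_i$ individually satisfies $c(H_i)<\kappa$. Since $U_i$ is a portable Liouville domain with incompressible dynamically convex boundary, and since $C(U_i)\leq\kappa$, the hypotheses of Theorem~\ref{thm:spec_bound_pos} are met for each $H_i$ separately (the theorem gives, for a single Hamiltonian supported in such a domain, the strict inequality $c(H_i)<\kappa$, presumably in terms of the portability number $p(U_i)$ and the invariant $C(U_i)$). Applying this theorem to each $i$ yields the pointwise bounds
\begin{equation*}
c(H_i)<\kappa \quad \text{for all } i=1,\dots,N.
\end{equation*}

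Having secured the hypothesis $c(H_i)<\kappa$, I would then invoke Theorem~\ref{thm:max_ineq_pos_monotone} directly, with the same domains $U_i$ and Hamiltonians $H_i$: all the structural hypotheses (positive monotonicity, disjoint $U_i$, incompressible dynamically convex boundaries, $C(U_i)\leq\kappa$) are already part of the corollary's statement, and the spectral bound $c(H_i)<\kappa$ has just been established. The conclusion of Theorem~\ref{thm:max_ineq_pos_monotone} is exactly the desired max inequality $c(H_1+\cdots+H_N)\leq\max\{c(H_1),\dots,c(H_N)\}$.

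The only non-bookkeeping step is the application of Theorem~\ref{thm:spec_bound_pos}, so I expect the main obstacle here to be purely organizational: making sure the notion of ``dynamically convex incompressible boundary'' used in both theorems matches, and checking that Theorem~\ref{thm:spec_bound_pos} is stated with a strict (not merely weak) inequality and does not implicitly require some extra smallness condition on $p(U_i)$ that one would have to build into the corollary. Assuming the statement of Theorem~\ref{thm:spec_bound_pos} is clean, the corollary is a two-line combination.
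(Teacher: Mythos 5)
Your proposal is correct and is exactly the paper's argument: Theorem~\ref{thm:spec_bound_pos} gives $c(H_i)<C(U_i)\leq\kappa$ for each $i$ (a strict inequality, with no extra smallness condition on $p(U_i)$), and then Theorem~\ref{thm:max_ineq_pos_monotone} applies verbatim. The notion of dynamically convex incompressible boundary is the same in both statements, so nothing further needs checking.
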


The last statement of this section establishes the max inequality for Hamiltonians supported in certain disks on the sphere.
\begin{claim}\label{clm:sphere}
	Let $(S^2,\omega)$ be the sphere with area normalized to 1. Let $H_i$ be Hamiltonians supported on disjoint disks $D_i\subset S^2$ such that  $area(D_i)\notin(1/3,1/2)$ for each $i$. Then, 
	\begin{equation*}
	c(H_1+\cdots+H_N)\leq \max\{c(H_1),\dots,c(H_N)\}.
	\end{equation*}
\end{claim}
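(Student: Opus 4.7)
The hypothesis $area(D_i) \notin (1/3, 1/2)$, combined with the fact that the $D_i$ are disjoint in the unit-area sphere, leaves only two possibilities: either (A) every $area(D_i) \leq 1/3$, or (B) exactly one disk, which we relabel $D_1$, satisfies $area(D_1) \geq 1/2$ while the remaining disks satisfy $area(D_i) \leq 1/3$. The second regime follows because two disks of area $\geq 1/2$ cannot fit disjointly on $S^2$.

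In case (A), I plan to mimic the proof of Theorem~\ref{thm:max_ineq_pos_monotone}. The sphere $(S^2,\omega)$ is positively monotone with constant $\kappa = 1/2$, each $\partial D_i$ is a simple closed curve (hence simply connected and incompressible), and the simple Reeb orbit has Conley--Zehnder index $2 = n+1$ for $n=1$, so dynamical convexity holds. A direct computation using Definition~\ref{def:ratio_invariant} gives $C(D_i) = a_i$, where $a_i := area(D_i)$, so both $C(D_i) \leq \kappa$ and $c(H_i) \leq a_i < \kappa$ are automatic whenever $a_i < 1/2$. What is missing is the dimension hypothesis $\dim M > 2$, and the strengthening $a_i \leq 1/3$ is precisely the extra geometric room required to compensate for it: three disjoint copies of $D_i$ fit in $S^2$ whenever $3 a_i \leq 1$, which is the replacement for the higher-dimensional freedom used in the Floer-theoretic construction of spectral killers.

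In case (B), the complement $\tilde D := S^2 \setminus D_1$ is a topological disk of area $\leq 1/2$ containing $D_2,\dots,D_N$, all of area $\leq 1/3$. Applying case (A) to this sub-collection gives
\[
c(H_2 + \cdots + H_N) \leq \max_{i \geq 2} c(H_i).
\]
To incorporate $H_1$, supported in the large disk $D_1$, I would invoke the Poincar\'e-type duality on the monotone sphere between the spectral invariants with respect to $[S^2]$ and $[pt]$: $c(H_1;[S^2])$ is determined, up to an additive monotonicity correction, by the point-class spectral invariant of the reverse flow, which is realized by a Hamiltonian with essential support in the complementary small disk $\tilde D$. This reduces the two-support problem (one large disk, one small disk) to a configuration already handled in case (A), now applied to $\tilde D$ together with $D_2,\dots,D_N$.

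The main obstacle I anticipate is executing case (A) when $\dim M = 2$: the construction of spectral killers in the proof of Theorem~\ref{thm:max_ineq_pos_monotone} likely requires modification in this low-dimensional setting, and the bound $a_i \leq 1/3$ must enter essentially (rather than the weaker $a_i < 1/2$) as the geometric substitute for the missing dimensional freedom. A secondary difficulty is verifying that the duality in case (B) interacts correctly with the disjoint-support structure underlying the max inequality, in particular that the ``dualized'' support of $H_1$ remains disjoint from the supports of $H_2,\dots,H_N$ in the relevant sense.
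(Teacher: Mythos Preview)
Your case split and the geometric rationale you give for the threshold $1/3$ miss the actual mechanism. The paper does not split into small-disk/large-disk cases, and the condition $a_i \le 1/3$ has nothing to do with fitting three disjoint copies of $D_i$ into $S^2$. Instead, the paper carries out the spectral-killer construction of Proposition~\ref{pro:killer1} directly for an arbitrary disk $U$ of area $a$, computing the relevant $n$-spectrum by hand. The $k$-fold iterates of the Reeb orbit on $\partial U$, capped inside $U$, have $\indCZ_H = 2k$ and $2k-1$; only the latter can be brought to index $-n=-1$ by attaching a sphere $A^k$ with $c_1(A^k)=-k$. Since the minimal Chern number of $S^2$ is $2$, this forces $k$ to be even, and the resulting action is $k(1/2-a)$. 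When $a\ge 1/2$ these numbers are all $\le 0$, so the interval $(0,c(H)]$ is unobstructed and the spectral killer exists without any further condition. When $a\le 1/3$, the energy-capacity inequality gives $c(H)<a\le 1/3$, while the smallest positive action is $2(1/2-a)\ge 1/3$, so again $(0,c(H)]$ is unobstructed. That arithmetic coincidence, driven by the Chern number, is the real role of $1/3$.

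Consequently your plan for case~(B) via Poincar\'e-type duality is unnecessary: the large disk is handled by exactly the same computation, and more easily than the small disks. The duality argument you sketch would be delicate to make precise (the ``dualized'' support interacts with the disjointness hypothesis only formally), and it never needs to be invoked. The honest gap in your proposal is that you have not identified what substitutes for the inequality $(n-1)\kappa \ge \kappa$ in the $n=1$ case; the answer is the even-parity constraint on $k$ coming from $c_1(S^2)=2$, not a packing argument.
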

{The method of our proof of the claim does not apply when the condition on the area of the disks is not satisfied. To deduce the max inequality we need to bound the actions of certain Reeb orbits on the boundary of the domain away from the spectral invariants of the considered Hamiltonians. The area of the disk determines the actions of the Reeb orbits on its boundary. 
We remark that in \cite{seyfaddini2014spectral}, Seyfaddini considered balls in monotone manifolds whose displacement energy is bounded by half the monotonicity constant. On $S^2$ (with total area normalized to 1) this amounts to disks of area less than $1/4$. In this setting Seyfaddini proved that the spectral invariant of a sum of Hamiltonians supported in such balls is bounded by the maximal capacity of these balls. Corollary~\ref{cor:max_ineq_pos_mon} and Claim~\ref{clm:sphere} together with Theorem~\ref{thm:spec_bound_pos} can be thought of as an extension of the results from \cite{seyfaddini2014spectral} for positively monotone manifolds, where Theorems~\ref{thm:max_ineq_neg_monotone} and \ref{thm:spec_bound_neg} extend the results to the setting of negatively monotone manifolds.}

\subsection{Applications for the Poisson bracket invariant.}
The main application of the max inequality (\ref{eq:max_ineq}) concerns the Poisson bracket invariant of covers, which was defined by Polterovich in \cite{polterovich2014symplectic}. As explained above, this invariant assigns a non-negative number, $pb(\cU)$, to a finite open cover $\cU=\{U_i\}$ of a closed symplectic manifold. The Poisson bracket invariant is known to be strictly positive when the cover consists of displaceable sets. Polterovich conjectured a lower bound for the Poisson bracket invariant:
\begin{conj}[Polterovich,  \cite{polterovich2014symplectic}]
	Let $(M,\omega)$ be a closed symplectic manifold. There exists a constant $c_M$, depending only on the symplectic manifold $(M,\omega)$, such that for every finite open cover $\cU=\{U_i\}$ of $M$,
	\begin{equation*}
	pb(\cU)\geq \frac{c_M}{e(\cU)},
	\end{equation*}  
	where $e(\cU):=\max_i e(U_i)$ is the maximal displacement energy of a set from $\cU$.
\end{conj}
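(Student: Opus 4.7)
The plan is to derive from the max inequality of Theorems~\ref{thm:max_ineq_rational_extendable}--\ref{thm:max_ineq_pos_monotone} a lower bound on $pb(\cU)$ along Polterovich's paradigm. Since the conjecture is open in full generality, the realistic target is to establish the inequality under hypotheses matching those of the max inequality theorems, i.e.\ when the sets $U_i$ are displaceable and have incompressible contact-type boundaries with the appropriate rationality, negative monotonicity, or dynamical convexity conditions.

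First, given a cover $\cU=\{U_i\}_{i=1}^N$, I would partition the index set into ``chromatic classes'' $I_1,\dots,I_\chi$ so that the sets $\{U_i:i\in I_k\}$ are pairwise disjoint within each class. The chromatic number $\chi$ can be bounded in terms of the multiplicity of $\cU$, which is controlled topologically. For any smooth partition of unity $\{f_i\}$ subordinate to $\cU$, set $F_k:=\sum_{i\in I_k} f_i$. Then $F_k$ is supported in the disjoint union $\bigsqcup_{i\in I_k} U_i$, and the max inequality gives
\begin{equation*}
c(F_k)\ \leq\ \max_{i\in I_k} c(f_i).
\end{equation*}
The energy-capacity inequality then yields $c(f_i)\leq e(U_i)\,\|f_i\|_\infty \leq e(\cU)$, after standard normalizations.

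Second, since $\sum_k F_k\equiv 1$, if the $F_k$'s Poisson-commuted the triangle inequality for spectral invariants would give $1=c(1)\leq \sum_k c(F_k)\leq \chi\cdot e(\cU)$, a cover-only bound independent of the partition. To access Poisson brackets I would iterate Polterovich's trick: consider long concatenations $F_{k_1}\#\cdots\#F_{k_m}$ of the flows and estimate the deviation of each $c(F_{k_1}\#\cdots\#F_{k_m})$ from $\sum_j c(F_{k_j})$ by a power of $\|pb(\{f_i\})\|$ times a function of $m$. This is the quasi-additivity mechanism familiar from Buhovsky--Entov--Polterovich. Combining this iteration with the max-inequality bound on each $c(F_k)$ converts the spectral identity $c(1)=1$ into an inequality of the form
\begin{equation*}
1\ \lesssim\ \chi\cdot e(\cU)\ +\ C(m)\cdot pb(\{f_i\})^{\alpha},
\end{equation*}
and optimizing over $m$ and taking the infimum over partitions of unity would yield the desired lower bound $pb(\cU)\geq c_M/e(\cU)$.

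The main obstacle is twofold. First, the max inequality in this paper requires geometric restrictions on the supports, so the argument does not cover arbitrary open covers; one must either restrict to covers by ``admissible'' sets or replace the max inequality by a version that holds unconditionally, and the latter is not available. Second, the quasi-additivity step that turns the spectral estimate into a Poisson-bracket lower bound is delicate: the error terms carry both a power of $pb$ and a power of the number of iterations, and a careful optimization is needed to recover precisely the shape $c_M/e(\cU)$ conjectured by Polterovich rather than a weaker dependence such as $c_M/e(\cU)^{1/2}$. For these reasons, what the max inequality of this paper most plausibly yields is the conjecture in the restricted setting of covers by sets of the type considered in Theorems~\ref{thm:max_ineq_rational_extendable}--\ref{thm:max_ineq_pos_monotone}, leaving the full conjecture open.
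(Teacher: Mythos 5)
The statement you were asked about is a \emph{conjecture} (due to Polterovich), and the paper does not prove it; it only records it as motivation and then establishes strictly weaker, degree-dependent lower bounds (Corollaries~\ref{cor:pb_mon} and \ref{cor:pb_rational}), explicitly noting that ``all known lower bounds for $pb$ decay with the degree of the cover.'' Your proposal, by its own admission, also does not prove the conjecture, so there is no complete argument to assess; but beyond that, the concrete step where you claim the conjectured bound would emerge is flawed. After splitting the cover into chromatic classes and bounding $c(F_k)\leq\max_{i\in I_k}c(f_i)\leq e(\cU)$ via the max inequality and the energy-capacity inequality, the quasi-additivity/iteration mechanism produces an inequality in which the number of classes $\chi$ (equivalently, a power of the degree $d(\cU)$) multiplies $e(\cU)$. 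No optimization over the iteration length $m$ removes that factor: $\chi$ depends on the cover, so the best one can extract is a bound of the shape $pb(\cU)\gtrsim 1/(d(\cU)^2\, e(\cU))$, which is exactly what the paper obtains by running Seyfaddini's argument (the proof of Theorem~9 in \cite{seyfaddini2014spectral}) on top of Theorems~\ref{thm:max_ineq_rational_extendable}--\ref{thm:max_ineq_pos_monotone}. Since the conjecture demands a constant $c_M$ depending only on $(M,\omega)$ and valid for \emph{every} finite open cover, a cover-dependent factor in the denominator is not a proof of it, even ``in the restricted setting'' you describe.

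There are also hypotheses you gloss over: the max inequality theorems require the supports to be domains with incompressible contact-type (and, depending on the sign of $\kappa$, dynamically convex or spectrum-constrained) boundaries, together with conditions such as $c(H_i)<\kappa$ or $C(U_i)\leq\kappa$; an arbitrary open cover, or even a cover by displaceable sets, need not satisfy these, so the inequality $c(F_k)\leq\max_i c(f_i)$ is not available without restricting the class of covers exactly as the paper's corollaries do. In short: the paper treats this statement as open, proves only the degree-dependent consequences of the max inequality, and your outline reproduces (in rougher form) that same route while overstating what the final optimization can deliver.
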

This conjecture was proved for the case where $M$ is a surface in \cite{buhovsky2020poisson}, and for surfaces other than the sphere in \cite{payette2018geometry}. In higher dimensions the conjecture is still open and all known lower bounds for $pb$ decay with the {\it degree} of the cover \cite{polterovich2014symplectic, seyfaddini2014spectral, ishikawa2015spectral}. The degree of an open cover $\cU:=\{U_i\}_{i=1}^N$ is defined to be the maximal number of sets intersected by a single set:
\begin{equation*}
d(\cU):=\max_i \#\{j:\bar U_i\cap \bar U_j \neq \emptyset\}.
\end{equation*}
In \cite{polterovich2014symplectic}, Polterovich proved that on  symplectically aspherical manifolds
$$
pb(\cU)\geq c_M/(d(\cU)^2\cdot \max_i p(U_i))
$$
for every cover $\cU$ consisting of portable Liouville domains. Here $p(U_i)$ is the portability number of $U_i$, from Definition~\ref{def:portable}. Later, Seyfaddini in \cite{seyfaddini2014spectral} proved that on monotone manifolds, i.e. when $\omega|_{\pi_2(M)}=\kappa c_1|_{\pi_2(M)}$, one has $pb(\cU)\geq 1/(2d(\cU)^2\cdot \max_i c_G(U_i))\geq 1/(2d(\cU)^2\cdot e(\cU))$ for every cover $\cU$ by balls that are displaceable with energy smaller than $|\kappa|/2$. Finally, in \cite{ishikawa2015spectral}, Ishikawa gave a lower bound for covers consisting of embeddings of strictly convex sets into monotone manifolds, which decays quadratically in the degree and depends on the curvature of the boundaries. The max inequality yields lower bounds in terms of the displacement energies of the sets, still decaying with the degree. The following corollary follows from Theorems~\ref{thm:max_ineq_neg_monotone} and \ref{thm:max_ineq_pos_monotone} together with Corollary~\ref{cor:max_ineq_pos_mon}, by arguments that appear in \cite{polterovich2014symplectic,seyfaddini2014spectral,polterovich2014function}. More specifically, we refer the reader to the proof of \cite[Theorem 9]{seyfaddini2014spectral}.
\begin{cor}\label{cor:pb_mon}
Let $(M,\omega)$ be a monotone symplectic manifold with monotonicity constant $\kappa$, and let $\cU:=\{U_i\}_{i=1}^N$ be a finite open cover of $M$ by domains with incompressible dynamically convex boundaries. Assume in addition that one of the following holds: 
\begin{itemize}
	\item $\kappa\leq 0$
	\item $\kappa> 0$, $C(U_i)\leq \kappa$ for all $i$ and, for each $i$,  either $e(U_i)< \kappa$ or $U_i$ is a portable Liouville domain.
\end{itemize}
Then,
\begin{equation}\label{eq:pb_monotone}
pb(\cU)\geq \frac{1}{2\cdot d(\cU)^2 \cdot e(\cU)}.
\end{equation}
\end{cor}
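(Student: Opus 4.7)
The plan is to follow the scheme of \cite[Theorem 9]{seyfaddini2014spectral}, using the max inequality from Theorems~\ref{thm:max_ineq_neg_monotone}--\ref{thm:max_ineq_pos_monotone} and Corollary~\ref{cor:max_ineq_pos_mon} in place of Seyfaddini's analogous bound for balls. First, given any smooth partition of unity $\{f_i\}_{i=1}^N$ subordinate to $\cU$, consider the intersection graph $\Gamma$ on $\{1,\dots,N\}$ whose edges join pairs $ij$ with $\bar U_i\cap\bar U_j\neq\emptyset$. By definition, the maximum degree of $\Gamma$ is at most $d(\cU)-1$, so a greedy proper coloring yields a coloring $\chi\fcolon\{1,\dots,N\}\to\{1,\dots,d\}$ with $d:=d(\cU)$. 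Regrouping the partition produces functions $F_c:=\sum_{\chi(i)=c}f_i$, each supported in the pairwise disjoint union $\bigsqcup_{\chi(i)=c}U_i$, satisfying $\sum_c F_c\equiv 1$.

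Next, fix a parameter $t>0$ and consider the Hamiltonians $\{tf_i:\chi(i)=c\}$ for each color $c$. Their disjoint supports satisfy the hypotheses of Theorem~\ref{thm:max_ineq_neg_monotone} when $\kappa\leq 0$, or of Corollary~\ref{cor:max_ineq_pos_mon} when $\kappa>0$, provided the smallness assumption $c(tf_i)<\kappa$ is met. In the positive-monotone case this is ensured either automatically (when $U_i$ is a portable Liouville domain, via Theorem~\ref{thm:spec_bound_pos}) or, when $e(U_i)<\kappa$, by choosing $t$ small and invoking the energy-capacity inequality $c(tf_i)\leq t\cdot e(U_i)\leq t\cdot e(\cU)$. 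The max inequality therefore yields, for each color $c$,
\begin{equation*}
c(tF_c)\;\leq\;\max_{\chi(i)=c}c(tf_i)\;\leq\;t\cdot e(\cU).
\end{equation*}

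From this point the argument is purely formal and proceeds exactly as in the proof of~\cite[Theorem~9]{seyfaddini2014spectral} (cf.\ also~\cite[Section~5]{polterovich2014symplectic} and \cite{polterovich2014function}). One applies a quantitative Poisson-bracket inequality for spectral invariants to a convex combination $\sum_c a_c F_c$ designed to approximate the constant function $1$, uses the normalization $\sum_c F_c\equiv 1$ on one side together with the uniform bound of the previous paragraph on the other, and optimizes in $t$ and in the coefficients $a_c$. The result is a lower bound of the form
\begin{equation*}
pb(\{f_i\})\;\geq\;\frac{1}{2\, d(\cU)^2\, e(\cU)},
\end{equation*}
and taking the infimum over all partitions subordinate to $\cU$ yields the corollary. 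The only truly new ingredient is the max inequality established earlier in the paper; the sole point of care is the smallness verification $c(tf_i)<\kappa$, which is automatic from the corollary's hypotheses and thus does not constitute a substantive obstacle.
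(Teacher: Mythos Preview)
Your proposal is correct and matches the paper's approach exactly: the paper does not give a detailed proof but simply refers to \cite[Theorem~9]{seyfaddini2014spectral}, with the max inequality (Theorems~\ref{thm:max_ineq_neg_monotone}, \ref{thm:max_ineq_pos_monotone} and Corollary~\ref{cor:max_ineq_pos_mon}) supplied as the new input. One minor slip: the energy--capacity inequality reads $c(tf_i)\leq e(U_i)$, not $c(tf_i)\leq t\cdot e(U_i)$; the conclusion $c(tf_i)<\kappa$ (and the uniform-in-$t$ bound $c(tF_c)\leq e(\cU)$) still follows, and that uniform bound is precisely what Seyfaddini's argument requires.
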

\begin{rem}
	When the cover $\cU$ consists of portable Liouville domains with incompressible dynamically convex boundaries, the maximal displacement energy, $e(\cU)$, in the lower bound (\ref{eq:pb_monotone}) can be replaced by $\max_ip(U_i)$ if $\kappa\leq 0$, and by $\max_iC( U_i)$ otherwise. This follows from the proof of \cite[Theorem 9]{seyfaddini2014spectral} together with Theorems \ref{thm:spec_bound_pos} and \ref{thm:spec_bound_neg} below, which give uniform bounds for the spectral invariants of Hamiltonians supported in such sets. In this case, one obtains a positive lower bound for the Poisson bracket invariant when the cover does not necessarily consist of displaceable sets.
\end{rem}

{We can use Theorem~\ref{thm:max_ineq_rational_extendable} to deduce a lower bound for $pb$ for certain covers on rational manifolds. Following the notations of Definition~\ref{def:extendable} above, assume that $\cU$ is a cover by 1-extendable balls and notice that $2\cU:=\{2 U_i\}$ is also a cover of $M$ by symplectically embedded balls. When the symplectic manifold $(M,\omega)$ is rational and the capacity of each ball in the cover is not greater than the rationality constant {$\kappa$}, the Poisson bracket invariant of $\cU$ can be bounded from below using the degree of the cover $2\cU$.} 
\begin{cor}\label{cor:pb_rational}
	Let $(M,\omega)$ be a rational manifold with rationality constant $\kappa$, and let $\cU$ be a cover by {1-extendable} balls, such that $c_G(U_i)\leq \kappa$. Then,
	\begin{equation*}
	pb(\cU)\geq\frac{1}{2\cdot d(2\cU)^2\cdot e(\cU)}. 
	\end{equation*} 
\end{cor}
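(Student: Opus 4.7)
The proof is the transcription to the rational setting of the argument that yields Corollary~\ref{cor:pb_mon} from Theorems~\ref{thm:max_ineq_neg_monotone} and \ref{thm:max_ineq_pos_monotone}, which in turn follows Polterovich \cite{polterovich2014symplectic} and Seyfaddini \cite{seyfaddini2014spectral}. The only genuinely new input is that Theorem~\ref{thm:max_ineq_rational_extendable}, applied colour-by-colour to the extended cover $2\cU$, plays the role of the max inequalities used in Corollary~\ref{cor:pb_mon}.

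\textbf{Step 1 (colouring).} I form the graph $\Gamma$ on vertex set $\{1,\dots,N\}$ with an edge between $i$ and $j$ whenever $\overline{2U_i}\cap\overline{2U_j}\neq\emptyset$. By definition of $d(2\cU)$ the maximum degree of $\Gamma$ is at most $d(2\cU)-1$, so greedy-colouring partitions $\{1,\dots,N\}$ into $K\le d(2\cU)$ colour classes $I_1,\dots,I_K$ such that, within each $I_c$, the $1$-extensions $\{2U_i\}_{i\in I_c}$ are pairwise disjoint. Since balls have simply connected boundary, every $\partial U_i$ is automatically incompressible.

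\textbf{Step 2 (spectral bounds inside each colour class).} Fix a smooth partition of unity $\{f_i\}$ subordinate to $\cU$ and coefficients $x_1,\dots,x_N\in[-1,1]$; set $H_j:=x_j f_j$ and $F_c:=\sum_{j\in I_c}H_j$. For the standard symplectic ball $U_j$ the Reeb flow on $\partial U_j$ is periodic and its minimal action equals the Gromov width, so $\min\spec(\partial U_j)=c_G(U_j)$. Since $c_G(U_j)\le\kappa$ by hypothesis, the quantitative condition of Theorem~\ref{thm:max_ineq_rational_extendable} (applied with $\sigma_j=1$) reduces to $c(H_j)<c_G(U_j)$. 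Working in the regime in which $e(\cU)$ is small compared to $\min_j c_G(U_j)$ (the complementary regime makes the claimed inequality trivial, after the standard rescaling of the partition of unity used in \cite{seyfaddini2014spectral}), the energy--capacity inequality gives
\[
c(\pm H_j)\ \le\ 2e(U_j)\ \le\ 2e(\cU)\ <\ \min\bigl\{\kappa,\, c_G(U_j)\bigr\}.
\]
Hence Theorem~\ref{thm:max_ineq_rational_extendable} applies to both $F_c$ and $-F_c$ and produces
\[
c(\pm F_c)\ \le\ \max_{j\in I_c} c(\pm H_j)\ \le\ 2e(\cU).
\]

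\textbf{Step 3 (from spectral bounds to $pb$).} Summing over the $K$ colours and using the triangle inequality for the spectral pseudonorm $\gamma$, every Hamiltonian of the form $F=\sum x_j f_j$ with $|x_j|\le 1$ satisfies $\gamma(F)\le 2K\, e(\cU)\le 2d(2\cU)\, e(\cU)$. Plugging this uniform bound, and the analogous bound for $G=\sum y_k f_k$, into the Poisson-bracket/Calabi-type inequality used in the proof of \cite[Theorem~9]{seyfaddini2014spectral} yields
\[
pb(\cU)\ \ge\ \frac{1}{2\cdot d(2\cU)^2\cdot e(\cU)},
\]
exactly as in the proof of Corollary~\ref{cor:pb_mon}.

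The principal obstacle is the numerical bookkeeping in Step~2: one must identify $\min\spec(\partial U_j)$ with the Gromov width of the ball, combine this with the assumption $c_G(U_j)\le\kappa$, and match the resulting threshold with the energy--capacity bound for $c(\pm H_j)$ after a suitable rescaling of the partition of unity. Once this is in place, the argument is a purely formal rewrite of Seyfaddini's scheme with $\cU$ replaced by $2\cU$ in the colouring step, which is exactly what produces $d(2\cU)^2$ rather than $d(\cU)^2$ in the denominator.
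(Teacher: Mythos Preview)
Your overall scheme---colour the extended cover $2\cU$, apply Theorem~\ref{thm:max_ineq_rational_extendable} colour by colour, and feed the resulting spectral bound into Seyfaddini's machinery---is exactly what the paper does. The paper's proof is a one-sentence reduction to \cite[Theorem~9]{seyfaddini2014spectral}, and the only substantive content is the verification that the hypothesis $c(H_j)<\min\{\kappa,\ \sigma_j\cdot\min\spec(\partial U_j)\}$ of Theorem~\ref{thm:max_ineq_rational_extendable} is met.

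There is, however, a gap in your Step~2. You write that you work ``in the regime in which $e(\cU)$ is small compared to $\min_j c_G(U_j)$''. But for a $1$-extendable ball one has $e(U_j)=c_G(U_j)$: the $1$-extension $2U_j$ is a ball of twice the capacity, inside which $U_j$ is displaceable with energy exactly its Gromov width. Hence $e(\cU)=\max_j c_G(U_j)\ge\min_j c_G(U_j)$, and your ``small regime'' is empty unless all the balls happen to have identical capacity. The complementary-regime rescaling you allude to therefore cannot complete the argument.

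The paper's fix is short and avoids any case analysis: since each $f_j$ has \emph{compact} support in the open ball $U_j$, it is in fact supported in a slightly smaller concentric ball $U_j'\subset U_j$ with $c_G(U_j')<c_G(U_j)$. The energy--capacity inequality then gives
\[
c(H_j)\ \le\ e(U_j')\ =\ c_G(U_j')\ <\ c_G(U_j)\ =\ \min\spec(\partial U_j)\ =\ \min\{\kappa,\ 1\cdot\min\spec(\partial U_j)\},
\]
using $c_G(U_j)\le\kappa$ in the last identification. So the threshold of Theorem~\ref{thm:max_ineq_rational_extendable} holds directly, with no regime split and no extra numerical factor (your $c(\pm H_j)\le 2e(U_j)$ should in any case read $c(\pm H_j)\le e(U_j)$). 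Once this is repaired, your Steps~1 and~3 go through verbatim.
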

Corollary~\ref{cor:pb_rational} can be deduced from Theorem~\ref{thm:max_ineq_rational_extendable} in the same way that \cite[Theorem 9]{seyfaddini2014spectral} is deduced from Theorem 2 there, together with the following observation. {Every 1-extendable ball $U$ is displaceable with energy $e(U)=c_G(U)$. Every Hamiltonian $H$ that is compactly supported in $U$ is also supported in a slightly smaller ball $U'$, of capacity strictly less than $c_G(U)$. By the energy capacity inequality, $c(H)\leq e(U')=c_G(U')<c_G(U)=\min\{\kappa, 1\cdot \min\spec(\partial U)\}$, which guarantees the requirements of Theorem~\ref{thm:max_ineq_rational_extendable}.}
	
\subsection{Uniform bounds on spectral invariants and super heavy sets.}
Theorem~\ref{thm:max_ineq_pos_monotone} above states that, on positively monotone manifolds, the max inequality holds for Hamiltonians supported in certain domains, assuming that their spectral invariants are smaller than the monotonicity constant $\kappa$. The following theorem guarantees that this bound on the spectral invariants holds under additional conditions on the domains containing the supports.
\begin{thm}\label{thm:spec_bound_pos}
	Let $(M,\omega)$ be a positively monotone symplectic manifold and suppose $U\subset M$ is a disjoint union of portable Liouville domains with dynamically convex incompressible boundaries. If $C(U)\leq\kappa$, then for every Hamiltonian $H:M\times S^1\rightarrow\R$ supported in $U$, $c(H)<C(U)$.
\end{thm}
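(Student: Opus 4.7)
The strategy is to construct, for parameters $T\in(0,C(U))\setminus\spec(\partial U)$ and small $s>0$, an auxiliary Hamiltonian $F=F_{T,s}$ built from the Liouville structure on $U$, whose spectral invariant can be computed via Floer theory and bounded by $T$, and then to show $c(H)\leq c(F)+o(1)$ using the portability hypothesis. Letting $s\to0$ and then $T\nearrow C(U)$ gives $c(H)\leq T<C(U)$; the strict inequality comes for free from the freedom to choose $T$ strictly below $C(U)$.

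We may reduce to $U$ connected, using $C(\sqcup U_i)=\max_i C(U_i)$ and monotonicity of $c$ under disjoint supports. After a small generic perturbation of the Liouville vector field, the Reeb flow on $\partial U$ is non-degenerate, at arbitrarily small cost to $C(U)$. Use the Liouville flow $\psi^\tau$ to identify a collar of $\partial U$ inside $U$ with $(1-\delta,1]\times\partial U$ so that $\omega=d(r\alpha)$. Define $F(r,x)=h(r)$ on the collar for a smooth non-increasing profile with slope $h'\approx -T$ away from the endpoints (avoiding $\spec(\partial U)$), extend by (a small Morse perturbation of) $H$ on the shrunken core $\psi^{\log s}U$, and by $0$ outside $U$.

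The 1-periodic orbits of $F$ are of three types: (i) orbits of $H$ sitting inside the displaceable shrunken core $\psi^{\log s}U$; (ii) constants in regions where $F$ is locally constant; and (iii) non-constant orbits on level sets $\{r=r_0\}$ coming from contractible Reeb orbits $\gamma\in\cP(\partial U)$ of period $-h'(r_0)\leq T$. A direct Liouville-coordinate calculation shows that a type-(iii) orbit capped by a disk lying in $\partial U$ has action $\approx\int_\gamma\lambda\leq T$ and Conley--Zehnder index $\indCZ_R(\gamma,u_0)-1$, which by dynamical convexity ($\indCZ_R\geq n+1$) is at least $n$, i.e., in the degree range relevant for the fundamental class. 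Positive monotonicity $\omega=\kappa c_1$ together with $C(U)\leq\kappa$ controls sphere-recapping: any recapping by $A\in\pi_2(M)$ shifts the action by $-\kappa c_1(A)$ and the index by $-2c_1(A)$, preserving the action-to-index ratio, so by the very definition of $C(U)$ no such recapping can produce a PSS representative of $[M]$ of action below the range $[-C(U),T]$. Type-(i) orbits lie inside a set displaceable with energy $s\cdot p(U)+o(s)$ by portability, so an energy-capacity argument shows their contribution to the PSS image of $[M]$ has action tending to $0$ as $s\to0$.

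Combining these estimates via the min-max definition of the spectral invariant yields $c(F)\leq T+o(1)$. A Hofer-Lipschitz estimate, again using portability to bound the difference between $H$ and $F$ on the shrunken core by $\|H\|\cdot(\text{displacement volume})$, gives $|c(H)-c(F)|\to0$ as $s\to0$. Taking $T\nearrow C(U)$ then yields $c(H)<C(U)$. The hardest step will be the Floer computation in the previous paragraph: carefully disentangling the Novikov-ring recapping in the positively monotone setting, and verifying that dynamical convexity combined with the calibration $C(U)\leq\kappa$ really forces the PSS representative of $[M]$ into the action window $(-\infty,T]$. The definition of $C(U)$ as an action-index ratio is precisely tuned to match the monotonicity shifts, and this tuning is what makes the computation close up.
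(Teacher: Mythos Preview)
The overall architecture has a genuine gap. Your auxiliary $F_{T,s}$ coincides with $H$ on the shrunken core $\psi^{\log s}U$ and has a thin radial collar near $\partial U$, but you never say what $F$ is on the large region between them, and in any case $H$ is supported in all of $U$, not just in $\psi^{\log s}U$. So the Hofer comparison $|c(H)-c(F)|\to 0$ cannot hold: on $U\setminus\psi^{\log s}U$ the function $H$ may be of order $\|H\|$ while $F$ is of order $T\delta$, and this discrepancy does not shrink with $s$. Likewise, the ``energy-capacity argument'' you invoke for type-(i) orbits is not valid: the energy-capacity inequality bounds $c$ of a Hamiltonian whose \emph{entire} support is displaceable, not the ``contribution'' of orbits lying in a displaceable subregion of a larger support. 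Since $F$ still contains the unknown orbits of $H$ inside the core, you have no direct way to compute $c(F)$.

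The paper sidesteps this by first using monotonicity of $c$ to reduce to a \emph{radial} Hamiltonian $H=\chi(s)$ (any compactly supported $H$ is bounded above by such a thing). This eliminates the unknown orbits entirely: all non-constant orbits are now Reeb-type. Then, instead of a single auxiliary $F$, the paper runs a continuous deformation $H_\tau:=H\circ\psi^{-\log\tau}$ (shrinking the support via the Liouville flow) and follows the bifurcation diagram of $\spec_n(H_\tau)$. Portability and energy-capacity are applied to the \emph{whole} $H_\tau$ for small $\tau$ to pin $c(H_\tau)$ near $0$; continuity then forces $c(H_1)$ to stay below the maximal action of an index-$(-n)$ Reeb-type orbit near the outer corner. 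Your index/action intuition for the Reeb-type orbits is in the right direction, but note that for a non-increasing profile the Hamiltonian orbit traverses the Reeb orbit backwards, so $\indRS(\gamma,u_0)\approx -\indCZ_R(\hat\gamma)$ rather than $+\indCZ_R(\hat\gamma)-1$; the recapping shifts action by $-\kappa c_1(A)$ and index by $+2c_1(A)$ (opposite signs), so there is no ``ratio preservation'' --- one really must write out $c_1(A)\geq(\indCZ_R-n-1)/2$ and combine it with the definition of $C(U)$ and $C(U)\leq\kappa$ to get the action bound. This is exactly the computation in the paper's Step~2.
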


The next result can be seen as a generalization to negatively monotone manifolds of \cite[Proposition 5.4]{polterovich2014symplectic}, which states that the spectral invariant of every Hamiltonian supported in a portable Liouville domain on a symplectically aspherical manifold is bounded by the portability number.
\begin{thm}\label{thm:spec_bound_neg}
	Let $(M,\omega)$ be a negatively monotone symplectic manifold and suppose that $U\subset M$ is a disjoint union of portable Liouville domains with dynamically convex incompressible boundaries. Then for every Hamiltonian $H:M\times S^1\rightarrow\R$ supported in $U$, its spectral invariant is bounded by the portability number of $U$, namely $c(H)\leq p(U)$.
\end{thm}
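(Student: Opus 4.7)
The plan is to adapt the proof of the corresponding statement on symplectically aspherical manifolds from \cite[Proposition 5.4]{polterovich2014symplectic}. The key idea is to use the Liouville flow of $U$ to rescale $H$ into a family of Hamiltonians $H_s$ with support shrinking to the core $Q$, and then combine a scaling identity for the spectral invariant with the energy-capacity inequality applied to the displaceable sets $\psi^{\log s}U \subset U$.

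Concretely, since $U=\bigsqcup_i U_i$ is a disjoint union of Liouville domains, the Liouville flow $\psi^\tau$ is defined on all of $U$ for $\tau \leq 0$ and satisfies $(\psi^\tau)^*\omega = e^\tau \omega$. For $s \in (0,1]$ set $\psi_s := \psi^{\log s}$ and define
\begin{equation*}
H_s(x,t) := \begin{cases} s\cdot H(\psi_s^{-1}(x), t), & x \in \psi_s U, \\ 0, & x \notin \psi_s U. \end{cases}
\end{equation*}
A direct calculation using $\psi_s^*\omega = s\omega$ shows that the Hamiltonian flow of $H_s$ agrees with $\psi_s \circ \phi_H^t \circ \psi_s^{-1}$ on $\psi_s U$ and is the identity elsewhere. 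Hence the 1-periodic orbits of $H_s$ in $\psi_s U$ correspond bijectively via $\psi_s$ to those of $H$ in $U$, with actions multiplied by $s$. Using the incompressibility of $\partial U$ to choose cappings of such orbits inside $U$ (which then rescale covariantly under $\psi_s$), one obtains the scaling identity $c(H_s) = s\cdot c(H)$. On the other hand, since $U$ is portable and $\psi_s U$ shrinks to the displaceable core $Q$ as $s \to 0$, the set $\psi_s U$ is displaceable in $U$ with energy $e(\psi_s U; U)$, so the energy-capacity inequality yields $c(H_s) \leq e(\psi_s U; U)$. Combining and letting $s \to 0^+$,
\begin{equation*}
c(H) \leq \lim_{s \to 0^+} \frac{e(\psi_s U; U)}{s} = p(U),
\end{equation*}
as required.

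The main obstacle is justifying the scaling identity $c(H_s) = s \cdot c(H)$ in the negatively monotone setting. In the aspherical case considered by Polterovich this is automatic because the action functional is single-valued, but here actions are only defined on a cover of the loop space in which cappings are identified modulo $\omega(\pi_2(M)) = \kappa\Z$, and the Liouville flow acts only on $U$. The negative monotonicity $\kappa \leq 0$, the incompressibility of $\partial U$ (forcing cappings of orbits in $U$ to lie inside $U$), and the dynamical convexity assumption (bounding the contact Conley-Zehnder indices of Reeb orbits from below and controlling which capped orbits can contribute to the spectral invariant of $[M]$) together allow the filtered Floer complex of $H_s$ to be identified with a rescaled copy of that of $H$, yielding the desired identity for $c(\cdot\,;[M])$.
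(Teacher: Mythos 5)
Your skeleton (symplectic contraction $H_s := s\cdot H\circ\psi_s^{-1}$, energy--capacity applied to $\psi_s U$, and the limit defining $p(U)$) is the same as the paper's, but the step you flag as ``the main obstacle'' is exactly where the proof lives, and your proposed resolution does not work. The identity $c(H_s)=s\cdot c(H)$ is false in general here, and the filtered Floer complex of $H_s$ is \emph{not} a rescaled copy of that of $H$: a generator of $CF_*(H_s)$ is a capped orbit $(\psi_s\gamma,(\psi_s u_0)\# A)$ with $A\in\pi_2(M)$, and its action is $s\cdot\cA_H(\gamma,u_0)-\omega(A)$. The term $-\omega(A)\in\kappa\Z$ does not scale with $s$, so the action filtration is an affine, not linear, function of $s$ on each generator. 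Incompressibility lets you choose \emph{one} capping inside $U$, but it cannot eliminate the recappings by spheres, and negative monotonicity alone does not force their contributions out of the relevant action window. This is precisely why the paper warns that, unlike the aspherical case, ``there could be a lot of intersections'' in the bifurcation diagram.

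What the paper actually does, and what your argument is missing, is the following. First, by monotonicity of spectral invariants one reduces to non-increasing \emph{radial} Hamiltonians $H=\chi(s)$; this reduction is essential because only for such Hamiltonians can one control both the actions and the indices of all periodic orbits. Second, one shows that in the bifurcation diagram $\tau\mapsto \tau\cdot\cA_H(\gamma,u_0)-\omega(A)$ every line relevant to the $n$-spectrum has non-negative slope (since $\cA_H(\gamma,u_0)=\chi(s)-s\chi'(s)\geq 0$ for non-increasing $\chi$) \emph{and} non-negative intercept: the index computation of Lemma~\ref{lem:radial_Hamiltonians2} together with dynamical convexity ($\indCZ_R(\hat\gamma,\hat u_0)\geq n+1$) forces $c_1(A)\geq 0$ for any index-$(-n)$ recapping, whence $-\omega(A)=-\kappa c_1(A)\geq 0$ by negative monotonicity. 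Your proposal never uses dynamical convexity in any concrete way, yet without it the intercepts can be negative and the argument collapses. Finally, even with all this, one obtains only the inequality $c(H_\tau)\leq a\tau$ where $a$ is the slope of the line followed near $\tau=0$ (the spectral invariant may drop to lines of smaller slope at crossings), and the energy--capacity inequality bounds $a\leq e(\psi^{\log\tau}U;U)/\tau\to p(U)$. The conclusion $c(H)\leq p(U)$ follows from this inequality at $\tau=1$, not from an exact scaling identity.
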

Here the portability number of a disjoint union of portable Liouville domains is defined to be the maximal portability number of a connected component, namely $p(U):=\max_i p(U_i)$. Theorems~\ref{thm:spec_bound_pos} and \ref{thm:spec_bound_neg} can be seen as versions of Ishikawa's result, \cite[Proprosition 4.4]{ishikawa2015spectral}, with different upper bounds and constraints. Ishikawa proved that, on negatively monotone manifolds, the spectral invariant of every Hamiltonian supported in a disjoint union of embeddings of strictly convex sets into $M$, is bounded by a constant depending on the curvature of the boundary. On positively monotone manifolds, he gave a different upper bound which also depends on the curvature, under the assumption that the minimal curvature is not too small, compared to the monotonicity constant.
As explained in \cite{ishikawa2015spectral}, an immediate corollary of Theorems~\ref{thm:spec_bound_pos} and \ref{thm:spec_bound_neg} concerns the notion of a {\it superheavy} set, which was introduced by Entov and Polterovich in \cite{entov2009rigid}:
A closed subset $X\subset M$ is called superheavy if 
\begin{equation}\label{eq:super-heavy}
\lim_{k\rightarrow\infty}\frac{c(kH)}{k} \leq \sup_{X\times S^1} H, \quad \forall H\in\cC^\infty(M\times S^1).
\end{equation}
\begin{cor}\label{cor:heavy_sets}
	Let $(M,\omega)$ be a monotone symplectic manifold with monotonicity constant $\kappa$. Let $U\subset M$ be a portable Liouville domain with a dynamically convex incompressible boundary, and assume in addition that either $\kappa\leq 0$ or $C(U)\leq \kappa$. Then, $M\setminus U$ is superheavy.
\end{cor}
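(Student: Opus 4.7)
The plan is to fix $H\in C^\infty(M\times S^1)$, set $s:=\sup_{(M\setminus U)\times S^1}H$, and prove that $\limsup_{k\to\infty} c(kH)/k\leq s$; this is the defining inequality for superheaviness of $M\setminus U$. The strategy will be to majorize $kH$ by a Hamiltonian $G_k$ which equals the constant $k(s+2\varepsilon)$ outside $U$ and then exploit the monotonicity and constant-shift properties of $c$ together with the uniform spectral bounds for Hamiltonians supported in $U$ provided by Theorems~\ref{thm:spec_bound_pos} and \ref{thm:spec_bound_neg}.

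For the construction, I would fix $\varepsilon>0$ and use compactness of $M\setminus U$ together with continuity of $H$ to obtain an open neighborhood $V\supset M\setminus U$ on which $H<s+\varepsilon$; in particular, the closed set $\{H\geq s+2\varepsilon\}$ would be a compact subset of $U$. Next I would pick once and for all a smooth nondecreasing function $\eta:\R\to\R$ with $\eta\equiv 0$ on $(-\infty,0]$, $\eta(y)=y$ for $y\geq\varepsilon$, and $\eta(y)\geq y$ for every $y\geq 0$, and set
\[
G_k(x,t):=k(s+2\varepsilon)+\eta\bigl(kH(x,t)-k(s+2\varepsilon)\bigr).
\]
By design $G_k$ would be smooth, would satisfy $G_k\geq kH$ pointwise, and the difference $G_k-k(s+2\varepsilon)$ would be a Hamiltonian compactly supported in $U$.

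By monotonicity and the shift property of spectral invariants this gives
\[
c(kH)\leq c(G_k)=k(s+2\varepsilon)+c\bigl(G_k-k(s+2\varepsilon)\bigr),
\]
and the last term is uniformly bounded in $k$ by the cited theorems: by $p(U)$ when $\kappa\leq 0$ (Theorem~\ref{thm:spec_bound_neg}) and by $C(U)$ when $C(U)\leq\kappa$ (Theorem~\ref{thm:spec_bound_pos}). Dividing by $k$, sending $k\to\infty$ and then $\varepsilon\to 0$ would yield $\limsup_k c(kH)/k\leq s$, completing the argument.

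The whole scheme hinges on the uniformity in $k$ of the spectral bound for Hamiltonians supported in $U$: without it, $c(G_k-k(s+2\varepsilon))$ could in principle grow with $k$ and the proof would collapse. The smoothing step is otherwise elementary — the only small subtlety is the requirement $\eta(y)\geq y$ on $[0,\varepsilon]$, which is what guarantees the domination $G_k\geq kH$ through the transition region rather than only on the two zones where $\eta$ is identically zero or the identity.
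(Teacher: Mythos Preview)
Your overall strategy is the standard one the paper alludes to (by citing Ishikawa): dominate $kH$ by a constant plus a Hamiltonian supported in $U$, then invoke Theorem~\ref{thm:spec_bound_pos} or \ref{thm:spec_bound_neg} for a $k$-independent bound on the latter. However, the cutoff function $\eta$ you describe does not exist. A smooth function satisfying $\eta\equiv 0$ on $(-\infty,0]$ has all derivatives vanishing at $0$, hence $\eta(y)=o(y)$ as $y\to 0^+$; this is incompatible with your requirement $\eta(y)\geq y$ for $y\geq 0$. So the domination $G_k\geq kH$ fails precisely in the transition region you flagged as ``the only small subtlety.''

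The repair is easy and the rest of your argument survives intact. One clean option: replace the value-cutoff by a spatial one. The $M$-projection of $\{H\geq s+\tfrac{3}{2}\varepsilon\}\subset M\times S^1$ is a compact subset of $U$, so choose a smooth $\rho:M\to[0,1]$ with $\rho\equiv 0$ near $M\setminus U$ and $\rho\equiv 1$ on that projection, and set
\[
G_k(x,t):=k(s+2\varepsilon)+\rho(x)\bigl(kH(x,t)-k(s+2\varepsilon)\bigr).
\]
Then $G_k-kH=(1-\rho)\bigl(k(s+2\varepsilon)-kH\bigr)\geq 0$, since $\rho(x)<1$ forces $H(x,\cdot)<s+2\varepsilon$, and $G_k-k(s+2\varepsilon)$ has support in $\operatorname{supp}\rho\subset U$. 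Alternatively, keep your $\eta$ (smooth, nondecreasing, $\eta\equiv 0$ on $(-\infty,0]$, $\eta(y)=y$ for $y\geq\varepsilon$) but drop the impossible condition $\eta\geq\mathrm{id}$; any such $\eta$ still satisfies $\eta(y)\geq y-\varepsilon$, so $G_k:=k(s+2\varepsilon)+\varepsilon+\eta\bigl(kH-k(s+2\varepsilon)\bigr)$ dominates $kH$, and the extra additive $\varepsilon$ is harmless after dividing by $k$.
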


\subsection*{Organization of the paper.}
Section~\ref{sec:preliminaries} contains an overview of the necessary preliminaries and fixes some notations. In Section~\ref{sec:killers}, we construct ``spectral killers" for Hamiltonians supported in a domain with incompressible contact type boundary, under a certain condition involving the Reeb dynamics on the boundary and the spectral invariant of the Hamiltonian. We also explain how the existence of spectral killers implies the max inequality and prove Theorem~\ref{thm:max_ineq_rational_extendable}. In Section~\ref{sec:proving_max_ineq} we show that the aforementioned condition  holds in various settings, and thus prove Theorems~\ref{thm:max_ineq_rational}, \ref{thm:max_ineq_neg_monotone} and \ref{thm:max_ineq_pos_monotone}. Section~\ref{sec:unif_bounds} concerns uniform bounds on spectral invariants and contains the proofs of Theorems~\ref{thm:spec_bound_pos} and \ref{thm:spec_bound_neg}. Finally, in Section~\ref{sec:the_invariant_C} we estimate the invariant $C(U)$ on certain classes of domains.

\subsection*{Acknowledgements.} 
{I am very grateful to my advisors Lev Buhovsky and Leonid Polterovich for their guidance and insightful inputs. I also thank Yaniv Ganor, Vincent Humili\`ere, R\'emi Leclercq and Sobhan Seyfaddini for useful discussions. 
It was recently brought to my attention that some of the arguments in this paper were known to Matthew Strom Borman, including the idea to use spectral killers in order to prove a max inequality. I am grateful to him for kindly sharing his previous findings and ideas with me. 
The research leading to these results was partially funded by the Israel Science Foundation, grants 1102/20 and 2026/17, as well as by the Levtzion Scholarship.}

\section{Preliminaries.}\label{sec:preliminaries}
Let us review the necessary preliminaries and fix some notations. Note that throughout the paper we assume $(M,\omega)$ to be a rational closed symplectic manifold, namely, $\omega(\pi_2(M))$ is a discrete subgroup of $\Z$. 

\subsection{Hamiltonian Floer homology.}
Given a Hamiltonian $H:M\times S^1\rightarrow\R$, its symplectic gradient is the vector field defined by the equation $\omega(X_H, \cdot) = -dH$ and the flow $\varphi_H^t $ of this vector field is called the Hamiltonian flow of $H$.
The set of 1-periodic orbits of $\varphi_H^t$ is denoted by $\cP(H)$. The Hamiltonian $H$ is called {\it non-degenerate} if the graph of $d\varphi_H^1$ is transversal to the diagonal in $TM\times TM$. Equivalently, $H$ is non-degenerate if every $\gamma\in \cP(H)$ is non-degenerate, that is, if 1 is not an eigenvalue of $d\varphi_H^1(\gamma(0))$ for every $\gamma\in \cP(H)$.

We denote by  $\cL M$ the space of contractible loops in $M$. A capping disk of $\gamma\in\cL M$ is a map $u:D\rightarrow M$ from the unit disk to $M$, satisfying $u|_{\partial D}=\gamma$. Two capping disks $u_1, u_2$ of $\gamma$ are equivalent if $[u_1\#(-u_2)]\in \ker \omega\cap \ker c_1$. We denote by $\widetilde{\cL M}$ the space of equivalence classes of capped loops, $(\gamma,u)$. 
The action functional corresponding to $H$ is defined on the space $\widetilde{\cL M}$ by
\begin{equation*}
\cA_H(\gamma,u) = \int_0^1 H(\gamma(t),t)\ dt -\int_u \omega.
\end{equation*}
The critical points of the action functional are (equivalence classes of) capped 1-periodic orbits of $\varphi_H^t$ and the set of their values is denoted by $\spec(H)$. For a non-degenerate Hamiltonian $H$ and a generic $\omega$-compatible almost complex structure $J$, the Floer chain complex $CF_*(H,J)$ is generated by these critical points and its differential is defined by counting certain negative gradient flow lines of $\cA_H$ (with respect to a metric induced by $J$ on $\widetilde{\cL M}$). 
For more details, see,  e.g., \cite{mcduff2012j,polterovich2014function}.
The chain complex $CF_*(H,J)$ is graded by the Conley-Zehnder (abbreviated to CZ) index, whose definition is recalled below. More formally, $CF_k(H,J)$ is generated by (equivalence classes of) capped 1-periodic orbits whose index\footnote{We add the subscript $H$ to the notation in order to distinguish this index from the contact CZ index, which will be discussed later.},  $\indCZ_H(\gamma,u)$, is equal to $-k$. For $k\in\Z$, we denote by $\spec_k(H)$ the set of action values of the generators of $CF_k(H,J)$.

\subsubsection{The Conley-Zehnder and Robbin-Salamon indices.}
The Conley-Zehnder index is defined for non-degenerate capped 1-periodic orbits, through an index of a path of symplectic matrices which is obtained from the linearized flow after trivializing the tangent bundle. In \cite{robbin1993maslov}, Robbin and Salamon defined a Maslov-type index for possibly degenerate orbits, that coincides with the Conley-Zehnder index on non-degenerate ones. Since we will consider degenerate Hamiltonians as well, we give here the definition of the Robbin-Salamon index,  following the exposition in \cite{gutt2014generalized}. 
\begin{itemize}
	\item Let $\Phi =\{\Phi(t)\}_{t\in[0,T]}\subset\Sp(2n)$ be a path of symplectic matrices. A number $t\in[0,T]$ is called a {\it crossing} if $\det(\Phi(t)-\id)=0$. The {\it crossing form} $\Gamma_t=\Gamma_t(\Phi)$	is the quadratic form obtained by restricting the symmetric matrix $S(t):=-J_0\dot{\Phi}(t)\Phi^{-1}(t)$ to $\ker(\Phi(t)-\id)$. A crossing $t_0$ is called {\it regular} if the crossing form $\Gamma_{t_0}$ is non-degenerate.
	
	\item For a path $\Phi =\{\Phi(t)\}_{t\in[0,T]}$ having only regular crossings, the {\it Robbin-Salamon index} (abbreviated to RS) is defined to be 
	\begin{equation}\label{eq:RS_ind}
	\indRS(\Phi):=\frac{1}{2}\sign(\Gamma_0)+\sum_{0<t<T} \sign(\Gamma_t)+\frac{1}{2}\sign(\Gamma_T),
	\end{equation}
	where the sum is taken over all crossings $t\in(0,T)$. 
	The definition of the RS index for general paths of symplectic matrices is given in \cite{gutt2014generalized}. This index admits several useful properties:
	\begin{itemize}
		\item (homotopy) The RS index is invariant under homotopies with fixed ends.
		\item (products) $\indRS(\Phi_1\oplus\Phi_2) = \indRS(\Phi_1)+ \indRS(\Phi_2)$.
		\item (conjugation) $\indRS(\Psi\Phi\Psi^{-1})=\indRS(\Phi)$ for every path $\Psi$ of symplectic matrices.
		\item (concatenation) $\indRS(\Phi|_{[t_1,t_3]}) = \indRS(\Phi|_{[t_1,t_2]})+\indRS(\Phi|_{[t_2,t_3]})$.
		\item (inverse) $\indRS(\bar\Phi)=\indRS(\Phi^T)=-\indRS(\Phi)$, where $\bar\Phi(t):=\Phi(-t)$ and $\Phi^T$ is the path of transposed matrices.
	\end{itemize}
	Given a general path we may perturb it with fixed ends to obtain a path with only regular crossings, and use (\ref{eq:RS_ind}) to compute the index.
	For a path $\Phi$  with only regular crossings such that $\Phi(0)=\id$, and $t=1$ is not a crossing, the RS index coincides with the Conley-Zehnder  index of the path.
	
	\item To define the index of a capped periodic orbit $(\gamma,u)$, consider a trivialization of the tangent bundle $TM$ along $u$. Then, the differential of the flow $\varphi_H^t$ along the loop $\gamma$ is identified with a path of symplectic matrices, $d\varphi_H^t(\gamma(0))\mapsto\Phi(t)\in\Sp(2n)$. The RS index of the capped orbit is given by $\indRS(\gamma,u):=\indRS(\Phi)$.
	Throughout the paper, we use the notation $\indCZ$ for non-degenerate orbits and $\indRS$ for degenerate ones. 
	A useful property of the CZ index is that the indices with respect to different capping disks differ by twice the first Chern class of the connected sum. More formally, let $\gamma\in\cP(H)$ be a non-degenerate 1-periodic orbit of a Hamiltonian $H$ and let $u,v:D\rightarrow M$ be two different capping disks for $\gamma$. Then, for $A:= u\# (-v)\in\pi_2(M)$, 
	\begin{equation}
	\indCZ_H(\gamma,u) =\indCZ_H(\gamma,v\#A) =\indCZ_H(\gamma,v)+2c_1(A),
	\end{equation}
	where $c_1$ denotes the first Chern class of $M$, see, e.g., \cite[Sections 2.6-7]{mcduff2012j}.
\end{itemize}
We remark that there are texts choosing an opposite sign for the CZ index, such as \cite{seyfaddini2014spectral}. In our sign convention, the index of a critical point $p$ of a $C^2$-small Morse function with a constant capping disk, $u_p(D)=\{p\}$, is related to the Morse index via $\indCZ_H(p,u_p)=n-i_{Morse}(p)$.
\subsection{Spectral invariants.}
The Floer complex admits a natural filtration by the action value. Let $CF_*^a(H,J)$ be the sub-complex generated by (equivalence classes of) capped 1-periodic orbits whose action is bounded by $a$ from above. Since the Floer differential is action decreasing, it restricts to the sub-complex $CF_*^a(H,J)$ and the  homology $HF_*^a(H,J)$ is well defined. 
The spectral invariant with respect to the fundamental class is defined to be the smallest value of $a$ for which the fundamental class appears in $HF^a_*(H,J)$, namely,
\begin{equation}
c(H) := \inf\{a:[M]\in \im(\iota^a_*)\},
\end{equation}
where $\iota^a_*:HF_*^a(H,J)\rightarrow HF_*(H,J)$ is the map induced by the inclusion $\iota^a:CF_*^a(H,J)\hookrightarrow CF_*(H,J)$.
We remark that spectral invariants are defined for general quantum homology classes, but we consider only the spectral invariant with respect to the fundamental class.
Spectral invariants have several useful properties, let us state the relevant ones:  
\begin{itemize}
	\item (stability)  For any Hamiltonians $H$ and $G$,
	\begin{eqnarray} 
	\nonumber\int_{0}^{1} \min_{x\in M}(H(x,t)-G(x,t))dt \leq c(H) - c(G) \leq  \int_{0}^{1} \max_{x\in M}(H(x,t)-G(x,t))dt.
	\end{eqnarray} 
	In particular, $c:\cC^\infty(M\times S^1)\rightarrow\R$ is a continuous functional and is extended by continuity to degenerate Hamiltonians. Moreover, this implies that the spectral invariant is monotone: If $G(x,t)\leq H(x,t)$ for all $(x,t)\in M\times S^1$, then $c(G)\leq c(H)$.
	
	\item (spectrality) $c(H)\in\spec(H)$. Moreover, if $H$ is non-degenerate, $c(H) \in \spec_n(H)$.
	
	\item (subadditivity) For every Hamiltonians $H$ and $G$, one has $c(H\#G)\leq c(H)+c(G)$, where $H\# G:= H+G\circ(\varphi_H^t)^{-1}$. In particular, if $H$ and $G$ are disjointly supported then $c(H+G)\leq c(H)+c(G)$. 
	\item(energy-capacity inequality) If the support of $H$ is displaceable, its spectral invariant is bounded by the displacement energy of the support, namely, $c(H)\leq e(supp(H))$.
	We remind that a subset $X\subset M$ is displaceable if there exists a Hamiltonian $G$ such that $\varphi_G^1(X)\cap X=\emptyset$. In this case, the displacement energy of $X$ is given by 
	\begin{equation}\label{eq:disp_energy_def}
	e(X):=\inf_{G: \varphi_G^1(X)\cap X=\emptyset} \int_0^1 \left(\max_M G(\cdot,t)-\min_M G(\cdot,t)\right)\ dt. 
	\end{equation}
\end{itemize}
For a wider exposition see, for example, \cite{mcduff2012j,polterovich2014function}.
A standard method for estimating the spectral invariant of a Hamiltonian $H$ is through a {\it bifurcation diagram}. Given a continuous deformation $\{H_\tau\}_\tau$ of $H$, the corresponding bifurcation diagram is the set $\cup_\tau \left(\{\tau\}\times\spec(H_\tau)\right)\subset \R^2$. By the spectrality and stability properties, the spectral invariant of $H_\tau$ moves continuously in the diagram as $\tau$ varies. Therefore, if we construct a deformation such that the value of the spectral invariant is known at a certain point of the deformation, we can study the bifurcation diagram in order to estimate $c(H)$. This approach was used by Polterovich in \cite{polterovich2014symplectic}, by Seyfaddini in \cite{seyfaddini2014spectral} and by Ishikawa in \cite{ishikawa2015spectral}, and is frequently used in the present paper as well.

\subsection{Contact hyper-surfaces, the Reeb flow and Liouville domains.}\label{subsec:preliminaries_Liouville_coord_and_Reeb_flow}
As mentioned earlier, a domain $U\subset M$ has a {\it contact type boundary} if there exists a vector field $Y$, called {\it the Liouville vector field}, that is defined on a neighborhood of the boundary $\partial U$, satisfies $\cL_Y\omega=\omega$, is transverse to the boundary and points outwards. The flow $\psi^s$ of $Y$ defines a radial coordinate, called the {\it Liouville coordinate}, on a tubular neighborhood of the boundary, by  
\begin{equation}\label{eq:Lioville_coordinate}
\partial U\times(1-\epsilon,1+\epsilon)\cong \cN(\partial U),\ \ (y,s)\mapsto x=\psi^{\log(s)} y.
\end{equation}
Note that the Liouville flow expands the symplectic form, namely $\left(\psi^{\log(s)}\right)^*\omega=s\cdot \omega$.
The 1-form  $\lambda:=\iota_Y\omega$ is a primitive of $\omega$ and the kernel of its restriction to $T\partial U$, namely $\xi:=\ker\lambda|_{T\partial U}$, is called the {\it contact distribution}. We denote by $R$ the {\it Reeb vector field}, which is defined on {a neighborhood of} 
$\partial U$ by 
\begin{equation}\label{eq:Reeb_def}
\lambda(R)=1,\ R_{(y,s)}\in\ker d\lambda_{(y,s)}|_{T\psi^{\log(s)}\partial U}.
\end{equation}
We remark that the Liouville vector field is not unique (and hence so are the 1-form $\lambda$ and the Reeb vector field). In fact, for every $C^1$-small Hamiltonian $H$ defined near $\partial U$, $Y':=Y-X_H$ is also a Liouville vector field. 
If the vector filed $Y$ extends to $U$ (and satisfies $\cL_Y\omega=\omega$ there), we say that $U$ is a {\it Liouville domain}. 
 
Denote by $\varphi_R^t:\partial U\rightarrow \partial U$ the flow of the Reeb vector field, and let $\cP(\partial U)$ be the set of all contractible periodic orbits of $\varphi_R^t$ (of any period). The action of such orbits is defined to be the integral of the 1-form $\lambda$ along the orbit, and coincides with the period. The set of action values is called {\it the Reeb spectrum} of $\partial U$ and is denoted by $\spec(\partial U)$. We say that the Reeb flow is  {\it non-degenerate} if the graph of the restriction  of $d\varphi_R^t$ to the contact distribution $\xi$ intersects the diagonal in $\xi\times \xi$ transversely. In this case, for each orbit $\gamma \in \cP(\partial U)$ and a capping disk $u\subset \partial U$, one can assign an integer, which we call the contact Conley-Zehnder index and denote\footnote{We add the subscript $R$ to the notation in order to distinguish the contact CZ index from the Hamiltonian CZ index that appeared earlier.} by $\indCZ_R(\gamma,u)$, in the following way. Trivializing the contact distribution $\xi$ along the disk $u$, the restriction of the linearized flow $d\varphi_R^t$ to $\xi$ along $\gamma$ is identified with a path of symplectic matrices, $d\varphi_R^t|_{\xi}(\gamma(0))\mapsto\Phi(t)\in \Sp(2n-2)$, see e.g., \cite{bourgeois2009survey}. The index $\indCZ_R(\gamma,u)$ is defined to be the RS index of the path $\Phi$, as in (\ref{eq:RS_ind}).

\subsubsection{Portable Liouville domains.} \label{subsec:portable_preliminaries}
As mentioned earlier, a Liouville domain $U$ is called portable if its core, which is given by $Q:=\cap_{s\in(0,1]} \psi^{\log(s)}U$, is displaceable in $U$. In this case, the displacement energy of $\psi^{\log(s)}U$ is arbitrarily small as $s$ approaches zero, as explained in \cite[p.499]{polterovich2014symplectic}. This fact will be used in the proof of Theorem~\ref{thm:spec_bound_pos} which asserts a uniform bound for spectral invariants of Hamiltonians supported in portable Liouville domains with incompressible dynamically convex boundaries on positively monotone manifolds, see Section~\ref{subsec:bound_pos_mon}.

\section{Constructing spectral killers.}\label{sec:killers}
In \cite{seyfaddini2014spectral}, Seyfaddini presented a construction of certain functions, called {\it spectral killers}, which can be used to produce upper bounds for the spectral invariant of a sum of disjointly supported Hamiltonians. A spectral killer for a Hamiltonian $H$ supported in a domain $U$ is a function $K:M\rightarrow\R$ supported in $U$ such that $c(H+K)=0$. 
\begin{claim}[Seyfaddini]\label{clm:max_ineq}
	Let $H_1,\dots, H_N$ be Hamiltonians supported in pairwise disjoint domains $U_1,\dots,U_N\subset M$ and suppose there exist Hamiltonians $K_i$ supported in $U_i$, such that $c(H_i+K_i)=0$. Then,
	\begin{equation}
	c(H_1+\cdots+H_N)\leq \max_i\|K_i\|_{C^0}.
	\end{equation} 
\end{claim}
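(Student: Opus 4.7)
The plan is short and entirely formal: combine the subadditivity of $c$ for disjointly supported Hamiltonians with the stability estimate. Set $H := H_1 + \cdots + H_N$ and $K := K_1 + \cdots + K_N$. Because the $U_i$ are pairwise disjoint, the sums $H_i + K_i$ are themselves pairwise disjointly supported, and
$$H + K \;=\; \sum_{i=1}^N (H_i + K_i).$$
Since $F \# G = F + G$ when $F,G$ have disjoint supports, iterating the subadditivity property $c(F\#G)\le c(F)+c(G)$ gives
$$c(H+K) \;\le\; \sum_{i=1}^N c(H_i+K_i) \;=\; 0,$$
using the hypothesis $c(H_i+K_i)=0$.

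The second step is to push this bound back to $c(H)$ via stability applied to the pair $(H,\, H+K)$:
$$c(H) - c(H+K) \;\le\; \int_0^1 \max_{x\in M}\bigl(H(x,t) - (H(x,t)+K(x,t))\bigr)\,dt \;=\; \int_0^1 \max_{x\in M}\bigl(-K(x,t)\bigr)\,dt.$$
Because the supports $U_i$ are pairwise disjoint, at each point $(x,t)$ at most one $K_i$ is nonzero, so $|K(x,t)| = \max_i |K_i(x,t)|$, and hence
$$\max_{x\in M}\bigl(-K(x,t)\bigr) \;\le\; \max_{x\in M}|K(x,t)| \;\le\; \max_i \|K_i\|_{C^0}.$$
Combining with $c(H+K)\le 0$ yields $c(H) \le \max_i \|K_i\|_{C^0}$, which is the desired inequality.

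There is no serious obstacle in this argument: once one has the two listed properties (subadditivity in the disjointly supported case and stability) and exploits that disjoint supports turn the $C^0$ norm of the sum into the maximum of the $C^0$ norms of the summands, the claim is immediate. The genuinely difficult content of the paper — the actual \emph{construction} of the spectral killers $K_i$ satisfying $c(H_i+K_i)=0$ — is not at issue in this claim and is taken up in the subsequent sections.
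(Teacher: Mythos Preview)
Your proof is correct and is essentially identical to the paper's own argument: both combine the stability estimate with subadditivity for disjointly supported Hamiltonians, together with the observation that $\|\sum_i K_i\|_{C^0}=\max_i\|K_i\|_{C^0}$ when the $K_i$ have disjoint supports. The only difference is the order in which you apply stability and subadditivity.
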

\begin{proof}
	The following argument is taken from \cite{seyfaddini2014spectral}. Using the stability and subadditivity properties of spectral invariants and noticing that $\{H_i+K_i\}$ are all disjointly supported, we have
	\begin{eqnarray*}
		c(H_1+\cdots+H_N) 
		&\leq& c\left(\sum_i (H_i+K_i)\right) +\Big\|-\sum_i K_i\Big\|_{C^0}\\
		&\leq& \sum_i c(H_i+K_i) +\Big\|\sum_i K_i\Big\|_{C^0}= \sum_i 0 +\max_i \|K_i\|_{C^0}.
	\end{eqnarray*}
\end{proof} 

In \cite{seyfaddini2014spectral}, Seyfaddini constructed spectral killers for  Hamiltonians supported in displaceable balls having small displacement energies in monotone manifolds. The norms of the spectral killers are bounded by the capacities of the balls.

In order to obtain the max inequality (\ref{eq:max_ineq}) we will construct spectral killers whose norms are equal to the spectral invariants of the Hamiltonians $\{H_i\}$ whenever $c(H_i)>0$. Note that for Hamiltonians with non-positive spectral invariants the max inequality follows from the subadditivity property of spectral invariants.   
In this section we show that under a certain condition on the domain $U$ and the spectral invariant $c(H)$ of $H$, there exists such a spectral killer $K$. We begin with a simpler construction of what we call a ``slow spectral killer", which is supported on a domain larger than $U$.

\subsection{A simpler case: ``slow spectral killers".}
In this section we use the notion of $\sigma$-extendable domains from Definition~\ref{def:extendable}. Our goal is to prove the following statement.
\begin{prop}\label{pro:slow_killers}
	Let $(M,\omega)$ be a rational symplectic manifold, namely $\omega(\pi_2(M))=\kappa\cdot \Z$, and let $U\subset M$ be a $\sigma$-extendable domain with an incompressible contact type boundary. Then, for every Hamiltonian $H$ supported in $U$ such that 
	\begin{equation}\label{eq:assumption_slow_killer}
	0\leq c(H)<\min\{\kappa,\sigma\cdot\min \spec(\partial U)\}
	\end{equation} 
	there exists $K:M\rightarrow \R$ supported in $(1+\sigma)U$ with $\|K\|_{C^0}=c(H)$ such that $c(H+K)=0$.
\end{prop}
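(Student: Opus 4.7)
The plan is to build $K$ as a radial function on the Liouville collar around $\partial U$, and then deduce $c(H+K)=0$ from a bifurcation analysis of the linear family $H_\tau := H+\tau K$, $\tau\in[0,1]$.

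\textbf{Construction of $K$.} Using $\sigma$-extendability, I would identify $(1+\sigma)U\setminus U$ with $\partial U\times[1,1+\sigma]$ via the Liouville flow, so that $\omega=d(s\lambda)$ on this collar. Pick a smooth non-decreasing function $f\fcolon\R\to\R$ with $f(s)=-c(H)$ for $s\leq 1$, $f(s)=0$ for $s\geq 1+\sigma$, and $\max f'<\min\spec(\partial U)$; this is possible because the average slope $c(H)/\sigma$ is strictly below $\min\spec(\partial U)$ by hypothesis, so $f$ can be chosen with $\max f'$ close to $c(H)/\sigma$. Define $K(y,s):=f(s)$ on the collar and extend by the constants $-c(H)$ on $U$ and $0$ outside $(1+\sigma)U$. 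Then $K\in C^{\infty}(M)$, $\supp K\subset(1+\sigma)U$, and $\|K\|_{C^0}=c(H)$.

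\textbf{Orbit and spectrum analysis.} Since $\tau K$ is constant on $U$, $X_{H_\tau}=X_H$ on $\supp H$, and each orbit of $H$ persists as an orbit of $H_\tau$ with action shifted by $-\tau c(H)$ (the capping disks stay in $U$ thanks to the incompressibility of $\partial U$). On the collar, $X_{\tau K}=\tau f'(s)R$, and a non-constant $1$-periodic orbit would require $\tau f'(s)\in\spec(\partial U)$; the choice of $f$ rules this out for all $\tau\in[0,1]$. Outside $(1+\sigma)U$ the Hamiltonian vanishes. After a small Morse perturbation supported near the degenerate loci $\{f'=0\}$ and the exterior, the action spectrum $\spec(H_\tau)$, considered mod $\kappa\Z$, decomposes into slanted ``main'' branches $\cA_H(\gamma,u_0)-\tau c(H)$ of slope $-c(H)$ coming from $\cP(H)$, together with nearly horizontal Morse branches clustered near $\{0,-\tau c(H)\}$. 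The rationality bound $c(H)<\kappa$ ensures that no branch wraps around $\R/\kappa\Z$ as $\tau$ traverses $[0,1]$.

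\textbf{Bifurcation argument.} The easy direction $c(H+K)\geq 0$ follows from the stability inequality together with $\|K\|_{C^0}=c(H)$. For the reverse, the function $\tau\mapsto c(H_\tau)$ is continuous (stability) and lies in $\spec(H_\tau)$ (spectrality). At $\tau=0$, $c(H_0)=c(H)$ is realised by a capped orbit $(\gamma_0,u_0)\in\cP(H)$ sitting on the main branch $(1-\tau)c(H)$, which terminates at $0$ when $\tau=1$. Tracing the continuous path $\tau\mapsto c(H_\tau)$ through the discrete bifurcation diagram, I would argue that it descends along this main branch (possibly transitioning onto a coincident Morse branch near $\tau=1$) and therefore satisfies $c(H_1)\leq 0$. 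Combined with the lower bound, this gives $c(H+K)=0$.

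\textbf{Main obstacle.} The delicate step is the branch-tracking argument: one must rule out that $c(H_\tau)$ jumps to a parallel slanted branch coming from another capped orbit of $H$, or to a shifted copy (via $\kappa\Z$) of an existing branch. The two hypotheses enter here in an essential way — $c(H)<\sigma\min\spec(\partial U)$ excludes spurious ``short'' Reeb branches from the collar, and $c(H)<\kappa$ blocks mod-$\kappa$ wrap-around that could populate the window $[0,c(H)]$ with additional action values. Incompressibility of $\partial U$ is also used implicitly, to identify the genuine Reeb spectrum that enters the action spectrum of $H_\tau$, and to ensure that cappings of orbits of $H$ chosen inside $U$ give the claimed action shift.
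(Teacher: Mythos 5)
Your proposal is correct and follows essentially the same route as the paper: the same radial killer interpolating from $-c(H)$ on $U$ to $0$ across the $\sigma$-collar with slope kept below $\min\spec(\partial U)$, the same linear deformation (yours parametrized over $[0,1]$ rather than $[0,c(H)]$), and the same bifurcation-diagram argument in which $c(H)<\sigma\min\spec(\partial U)$ kills non-constant collar orbits and $c(H)<\kappa$ keeps the horizontal $\kappa\Z$-branches out of the window $(0,c(H)]$, so the spectral invariant rides a single slanted branch down to $0$.
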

The above proposition guarantees that the max inequality (\ref{eq:max_ineq}) holds for Hamiltonians supported in disjoint extendable domains that satisfy (\ref{eq:assumption_slow_killer}), only if we assume in addition that the supports  $(1+\sigma_i)U_i$  of the spectral killers are disjoint. In particular, Theorem~\ref{thm:max_ineq_rational_extendable} is an immediate consequence of Proposition~\ref{pro:slow_killers} and Claim~\ref{clm:max_ineq}:
\begin{proof}[Proof of Theorem~\ref{thm:max_ineq_rational_extendable}]
	Let $\{H_i\}$ be Hamiltonians supported in domains $\{U_i\}$ with incompressible contact type boundaries such that $U_i$ is $\sigma_i$-extendable, the sets $\{(1+\sigma_i)U_i\}$ are pairwise disjoint, and $c(H_i)<\min\{\kappa,\sigma_i\cdot\min\spec(\partial U_i)\}$.  By Proposition~\ref{pro:slow_killers} there exist $K_i:M\rightarrow\R$ supported in $(1+\sigma_i)U_i$ such that $\|K_i\|_{C^0}=c(H_i)$ and $c(H_i+K_i)=0$. Applying Claim~\ref{clm:max_ineq} to $\{H_i\}$ and $\{K_i\}$ with the disjoint domains $\{(1+\sigma_i)U_i\}$, we conclude that the max inequality (\ref{eq:max_ineq}) holds for $\{H_i\}$.
\end{proof}

In order to prove  Proposition~\ref{pro:slow_killers} we need some preliminary notations and calculations. 
We use the notations of Section~\ref{sec:preliminaries} and, in particular, the Liouville coordinate $s$ defined in (\ref{eq:Lioville_coordinate}), using the Liouville vector field.
The spectral killer $K$ will be a function of the Liouville coordinate. This will enable us to relate its 1-periodic orbits to the Reeb orbits on $\partial U$. 
\begin{defin}\label{def:radial_hamiltonian}
	We say that an autonomous Hamiltonian $H$  is {\it radial} if there exists $\chi:\R\rightarrow\R$ such that $H=\chi(s)$ wherever the the Liouville coordinate is defined, and is locally constant elsewhere.
\end{defin} 
Radial Hamiltonians and their periodic orbits were studied in \cite{hofer1998dynamics,ishikawa2015spectral} for the case where $U$ is a strictly convex domain in $\R^{2n}$.	
The following lemma relates between the 1-periodic orbits of radial Hamiltonians and the Reeb orbits on $\partial U$. 
\begin{lemma}\label{lem:radial_Hamiltonians1}
	Let $H=\chi(s)$ be a radial Hamiltonian, then the Hamiltonian flow of $H$ is conjugated to the Reeb flow up to a time reparametrization:
	\begin{equation}\label{eq:Ham_Reeb_flow_conj}
	\varphi_H^t=\psi^{\log s}\circ \varphi_R^{\chi'(s)\cdot t}\circ\psi^{-\log s}.
	\end{equation}
	In particular, every non-constant 1-periodic orbit $\gamma$  of $H$ is contained in a level set of the Liouville coordinate $s=s(\gamma)$ and is conjugated to a periodic Reeb orbit, $\hat{\gamma}\in\cP(\partial U)$, via $\hat\gamma(\chi'(s)\cdot t) = \psi^{-\log s}\gamma(t)$. Moreover, the Reeb action of $\hat \gamma$ is equal to the absolute value of the derivative of $\chi$ at $s=s(\gamma)$:
	\begin{equation}\label{eq:action_vs_dchi}
	\int_{\hat\gamma}\lambda =|\chi'(s)|.
	\end{equation}
	In particular, $|\chi'(s)|$ belongs to the Reeb spectrum of $\partial U$.
\end{lemma}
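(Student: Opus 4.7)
The plan is to compute the Hamiltonian vector field $X_H$ in the Liouville coordinate neighborhood of $\partial U$, show that it is a pointwise multiple of the extended Reeb vector field, and then, after conjugation by $\psi^{\log s}$, identify the resulting flow with the Reeb flow on $\partial U$ rescaled in time by $\chi'(s)$.

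First I would work in the trivialization $\partial U\times(1-\epsilon,1+\epsilon)\cong\cN(\partial U)$ from (\ref{eq:Lioville_coordinate}). In these coordinates $Y=s\partial_s$ and, denoting by $\lambda_0$ the contact form $\lambda|_{T\partial U}$ trivially extended in $s$, one has $\lambda=s\lambda_0$ and $\omega=d\lambda=ds\wedge\lambda_0+s\,d\lambda_0$. Let $R_0$ be the Reeb vector field of $\lambda|_{\partial U}$ extended trivially in $s$. The defining equations for the extended Reeb field $R$ force $R=R_0/s$, and a direct computation gives $\iota_R\omega=-ds/s$. Since $dH=\chi'(s)\,ds$, the equation $\iota_{X_H}\omega=-dH$ is uniquely solved by
\begin{equation*}
X_H=s\chi'(s)\,R=\chi'(s)\,R_0.
\end{equation*}
In particular $ds(X_H)=0$, so $\varphi_H^t$ preserves every level set of $s$, and every non-constant $1$-periodic orbit $\gamma$ sits on a single level set $\{s=s(\gamma)\}$. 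Outside the Liouville neighborhood $H$ is locally constant, hence $X_H=0$, so only these level sets carry non-constant orbits.

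Next, on the level set $\{s=s_0\}$ the coefficient $\chi'(s_0)$ is constant, and $R_0$ is the trivial extension of the Reeb vector field on $\partial U$, so its flow on the level set is obtained from $\varphi_R$ by conjugation with $\psi^{\log s_0}$:
\begin{equation*}
\varphi_{R_0}^u\big|_{\{s=s_0\}}=\psi^{\log s_0}\circ\varphi_R^u\circ\psi^{-\log s_0}.
\end{equation*}
Setting $u=\chi'(s_0)t$ and letting $s_0$ range over the Liouville neighborhood yields the asserted conjugation $\varphi_H^t=\psi^{\log s}\circ\varphi_R^{\chi'(s)t}\circ\psi^{-\log s}$.

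Finally, for a non-constant $1$-periodic orbit $\gamma$ at level $s=s(\gamma)$, I would define $\hat\gamma(u):=\varphi_R^u(\psi^{-\log s}\gamma(0))$, so the conjugation formula immediately reads $\hat\gamma(\chi'(s)t)=\psi^{-\log s}\gamma(t)$. The $1$-periodicity of $\gamma$ forces $\hat\gamma(\chi'(s))=\hat\gamma(0)$, so $\hat\gamma$ is a closed Reeb orbit of period $|\chi'(s)|$. The standing assumption that $\partial U$ is incompressible in $M$ upgrades contractibility of $\gamma$ in $M$ to contractibility of $\hat\gamma$ in $\partial U$, giving $\hat\gamma\in\cP(\partial U)$. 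Its Reeb action is $\int_{\hat\gamma}\lambda=\int_0^{|\chi'(s)|}\lambda_0(R_0)\,du=|\chi'(s)|$, which proves (\ref{eq:action_vs_dchi}) and the membership $|\chi'(s)|\in\spec(\partial U)$.

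The only genuine subtlety is keeping track of the factor of $s$ that appears both in $X_H=s\chi'(s)R$ and in $R=R_0/s$; the two must be allowed to cancel to produce the clean time rescaling $\chi'(s)t$ (rather than $s\chi'(s)t$) in the final conjugation formula. Everything else is a routine unpacking of the definitions of $Y$, $\lambda$, $\omega$ and $R$ in Liouville coordinates.
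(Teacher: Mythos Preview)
Your argument is correct and follows essentially the same strategy as the paper: compute $X_H$ in the Liouville collar, identify it with a rescaled Reeb field, and read off the conjugation. The only stylistic difference is that you work in the explicit product coordinates $(y,s)$ and write $X_H=\chi'(s)R_0$ with $R_0$ the trivially extended Reeb field, whereas the paper keeps the computation coordinate-free by pushing $X_H$ forward via $\psi^{-\log s}$ and checking the defining equations of $R$ on $\partial U$ directly. Both routes amount to the same cancellation of the factor $s$ that you flag. One small point in your favor: you explicitly invoke incompressibility of $\partial U$ to upgrade contractibility of $\gamma$ in $M$ to contractibility of $\hat\gamma$ in $\partial U$, which is needed for $\hat\gamma\in\cP(\partial U)$ and is left implicit in the paper's proof.
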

\begin{proof}[Proof of Lemma~\ref{lem:radial_Hamiltonians1}]
	Let us prove the following relation between the Hamiltonian vector field $X_H$ of $H$ and the Reeb vector field:
	\begin{equation}\label{eq:Ham_Reeb_vf_conj}
	d\psi^{-\log s} X_H\circ\psi^{\log s}=\chi'(s)\cdot R.
	\end{equation}
	Note that (\ref{eq:Ham_Reeb_flow_conj}) will follow from uniqueness of solutions of ODEs. Let us show that the LHS of (\ref{eq:Ham_Reeb_vf_conj}) satisfies the equations defining the Reeb vector field with a factor of $\chi'(s)$. Given any vector $v\in T_xM$ for  $x\in\partial U$,
	\begin{eqnarray}
	\omega_x(v, d\psi^{-\log s} X_H\circ\psi^{\log s}(x)) &=& (\psi^{-\log s}_x)^*\omega_{\psi^{\log s} x}(d\psi^{\log s} v, X_H) \nonumber\\
	&=&s^{-1}\cdot \omega_{\psi^{\log s} x}(d\psi^{\log s} v, X_H) \nonumber\\
	&=& s^{-1} dH_{\psi^{\log s} x}(d\psi^{\log s} v).\label{eq:intermediate_XH_vs_R}
	\end{eqnarray} 
	When  $v\in T\partial U$, its image under the linearized Liouville flow, $d\psi^{\log s} v$, is tangent to a level set of the Liouville coordinate $s$ and hence to a level set of $H$. In this case,
	\begin{equation*}
	\omega_x(v, d\psi^{-\log s} X_H\circ\psi^{\log s}(x)) = s^{-1}dH_{\psi^{\log s} x}(d\psi^{\log s} v) =0
	\end{equation*}
	On the other hand, taking $v$ to be the Liouville vector field, equation  (\ref{eq:intermediate_XH_vs_R}) implies:
	\begin{eqnarray*}
		\omega_x(Y, d\psi^{-\log s} X_H\circ\psi^{\log s}(x)) &=& s^{-1} dH_{\psi^{\log s} x}(d\psi^{\log s} Y)\\
		&=& s^{-1}\cdot\frac{d}{d\tau}\Big|_{\tau = 0} H\circ \psi^{\log (s)}  \psi^\tau(x)\\
		&=& s^{-1}\cdot\frac{d}{d\tau}\Big|_{\tau = 0} H\circ \psi^{\log (s\cdot e^\tau)}  (x)\\
		&=& s^{-1}\cdot\frac{d}{d\tau}\Big|_{\tau = 0} \chi(s\cdot e^\tau) = \chi'(s).
	\end{eqnarray*}
	
	Having established the conjugation of the Hamiltonian and the Reeb flows (\ref{eq:Ham_Reeb_flow_conj}), we turn to prove the relation between the Reeb action and the derivative of $\chi$ stated in equation (\ref{eq:action_vs_dchi}). For a 1-periodic orbit $\gamma$ of $H$ lying in level $s$, let $\hat{\gamma}\subset \partial U$ be the curve defined by $\hat \gamma(\chi'(s)\cdot t) =\psi^{-\log s}\gamma(t)$. Then, using (\ref{eq:Ham_Reeb_flow_conj}) we find
	\begin{eqnarray}
	\frac{d}{dt}\hat\gamma(t) &=& \frac{d}{dt}\left(\psi^{-\log s}\gamma(t/\chi'(s))\right) = \frac{d}{dt}\left(\psi^{-\log s}\varphi_H^{t/\chi'(s)}\gamma(0)\right) \nonumber\\
	&=& \frac{d}{dt}\left(\varphi_R^{t\cdot \chi'(s)/\chi'(s)}\psi^{-\log s}\gamma(0)\right)=R\nonumber.	
	\end{eqnarray} 
	We conclude that $\hat \gamma$ is a periodic Reeb orbit whose period, and therefore action, is equal to $|\chi'(s)|$.
\end{proof}
We are now ready to prove Proposition~\ref{pro:slow_killers}.
\begin{proof}[Proof of Proposition~\ref{pro:slow_killers}]
	Let $U$ be a $\sigma$-extendable domain with an incompressible contact type boundary and let $H$ be a Hamiltonian supported in $U$. 
	Let $\cN(\partial U)\cong\partial U\times(1-2\epsilon, 1+\sigma)$ be a tubular neighborhood of the boundary on which the Liouville coordinate is defined, and assume that $\epsilon$ is small enough so that $H|_{\cN(\partial U)}=0$.
	\begin{figure}
		\centering
		\includegraphics[scale=0.6]{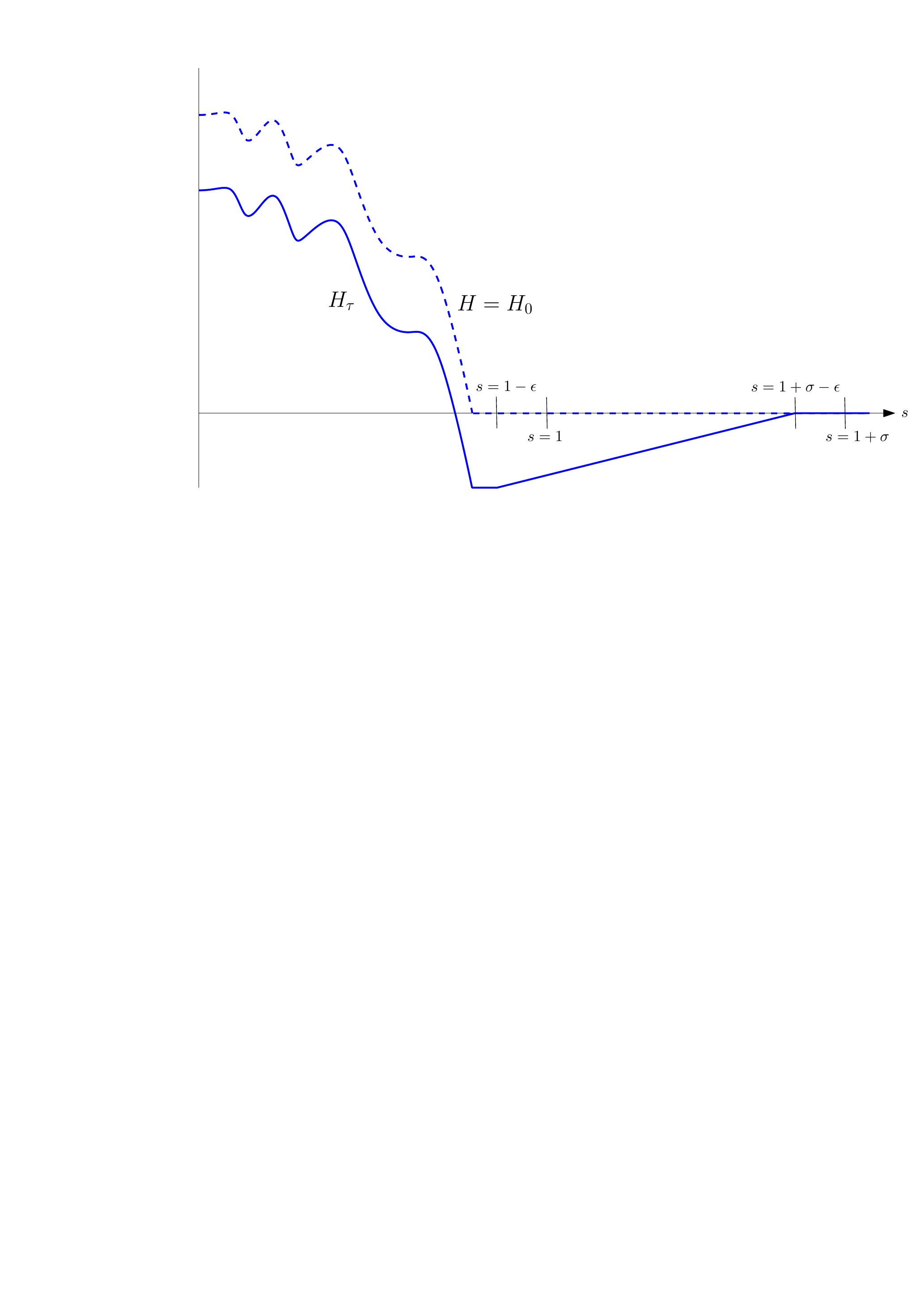}
		\caption{\small{An illustration of the graphs of $H_\tau$ (solid line) and $H$ (dashed line) in the radial coordinate $s$. 
		}}
		\label{fig:deformation_slow_killer}
	\end{figure}
	\noindent
	Consider the  autonomous radial Hamiltonian defined by
	\begin{equation}
	K_1(x) :=\begin{cases}
	-1 & x\in U\setminus\big(\partial U\times (1-\epsilon,1)\big),\\
	\chi_1(s(x)) & x\in\partial U\times(1-\epsilon,1+\sigma-\epsilon),\\
	0 & \text{elsewhere}
	\end{cases}
	\end{equation}
	where $\chi_1:\R\rightarrow\R$ is a smooth approximation of the continuous piecewise linear function taking the value $-1$ for $s\leq 1-\epsilon$ and $0$ for $s\geq1+\sigma -\epsilon$. We choose $\chi_1$ such that its derivative is bounded by $1/\sigma$ and that, for $s>1-\epsilon$, its derivative vanishes only in the ``flat region", i.e. where $\chi_1$ itself vanishes. For each $\tau\in[0,c(H)]$, set $K_\tau:= \tau\cdot K_1$, then $\{K_\tau\}_\tau$ is a continuous family of radial Hamiltonians. Denoting $\chi_\tau:=\tau\cdot\chi_1$,   its derivative is bounded by $\tau/\sigma$, and therefore $K_\tau$ has no non-constant 1-periodic orbits as long as $\tau/\sigma<\min \spec(\partial U)$. This follows from Lemma~\ref{lem:radial_Hamiltonians1}, which states that non-constant 1-periodic orbits of radial Hamiltonians appear only for $s$ such that $|\chi'(s)|\in\spec(\partial U)$.  Assumption (\ref{eq:assumption_slow_killer}) in Proposition~\ref{pro:slow_killers} states that $c(H)<\sigma\cdot\min \spec(\partial U)$ and therefore $K_{\tau}$ has no non-constant 1-periodic orbits for all $\tau\in[0,c(H)]$.
	We will show that $K_{c(H)}$, which is supported in $(1+\sigma)U$, is a spectral killer for $H$, namely, that $c(H+K_{c(H)})=0$.
	Consider the deformation of the Hamiltonian $H$ given by $\{H_\tau = H+K_\tau\}_{\tau\in[0,c(H)]}$, which is illustrated in Figure~\ref{fig:deformation_slow_killer}, and let us study the corresponding bifurcation diagram.
	We remind that by the stability and spectrality properties, the spectral invariant moves continuously in this diagram. Let us show that the bifurcation diagram consists of lines with slope $-1$, corresponding to orbits in $U$, and of horizontal lines with values in $\kappa\Z$, corresponding to constant orbits outside of $U$, as illustrated in  Figure~\ref{fig:bifurcation_diag_slow_killer}. Indeed:
	\begin{figure}
		\centering
		\includegraphics[scale=0.5]{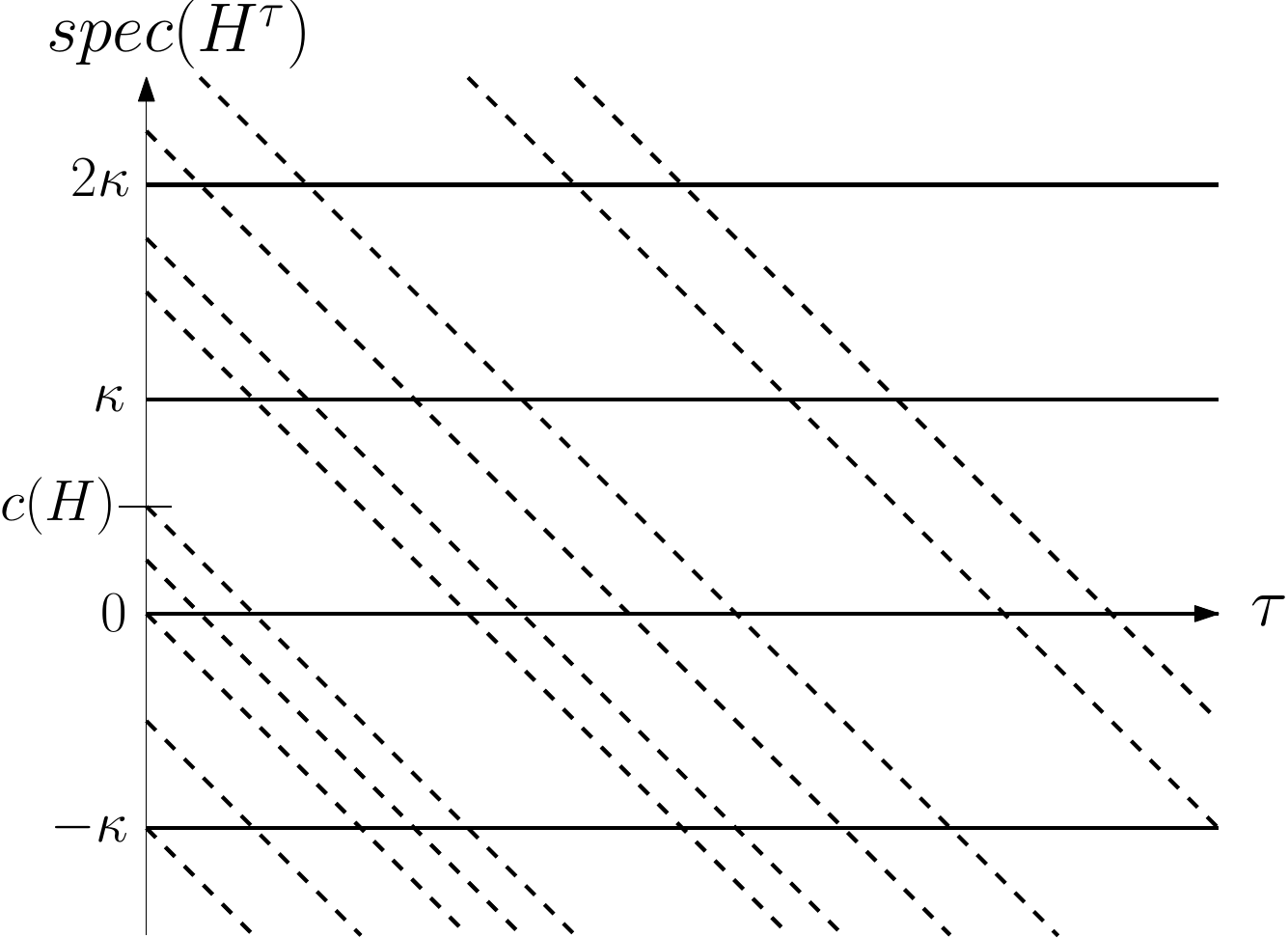}
		\caption{\small{An illustration of the $n$-spectrum of a non-degenerate perturbation of $H_\tau$. The dashed lines correspond to actions of orbits in $\{s<1-\epsilon/2,1\}$ while the solid lines correspond to actions of  orbits in $\{s>1-\epsilon/2\}$.}}
		\label{fig:bifurcation_diag_slow_killer}
	\end{figure}\noindent
	\begin{itemize}
		\item Orbits in $\{s> 1-\epsilon\}$: In this region $H_\tau$ coincides with $K_\tau$ which has only constant 1-periodic orbits. Their actions are given by $\chi_\tau(s) -\omega(A)$ for some $A\in \pi_2(M)$ and $s$ such that $\chi_\tau'(s)=0$. When $s>1-\epsilon$, the derivative of $\chi_\tau$ vanishes if and only if $s>1+\sigma-\epsilon$, in which case $\chi_\tau(s)=0$. Overall, the 1-periodic orbits of $H_\tau$ in this region all have actions lying in $\kappa \Z$ and do not change with $\tau$. 
		\item Orbits in $\{s\leq 1-\epsilon\}$: Here $H_\tau=H-\tau$. Therefore, the 1-periodic orbits of $H_\tau$ do not change with $\tau$ but  their actions decrease linearly: 
		$$
		\cA_{H_\tau}(\gamma,\tau)=\int_0^1 H_\tau (\gamma(t),t)\ dt -\int_u\omega =   \int_0^1 H (\gamma(t),t)\ dt -\tau -\int_u\omega .
		$$
	\end{itemize}	
	We conclude that the bifurcation diagram consists of decreasing lines of slope $-1$ and of horizontal lines with values in $\kappa\Z$. By assumption (\ref{eq:assumption_slow_killer}), $c(H)<\kappa$ and thus there are no horizontal lines between $0$ and $c(H)$. It follows that the spectral invariant $c(H_\tau)$ moves along a single decreasing line and hence $c(H+K_\tau)=c(H_\tau)=c(H)-\tau$ for $\tau\in[0,c(H)]$. In particular, $c(H+K_{c(H)})=0$ and $K_{c(H)}$ is the required spectral killer.
\end{proof}

\subsection{Spectral killers that are supported in $U$.}
In order to construct spectral killers that are supported in $U$, rather than on a larger domain, we must consider radial Hamiltonians with arbitrarily large slopes. Such Hamiltonians may have a lot of non-constant 1-periodic orbits which correspond to Reeb orbits on the boundary of $U$, as stated in Lemma~\ref{lem:radial_Hamiltonians1}. In this case, we need to impose certain  assumptions on the spectral invariant of the Hamiltonian considered and the Reeb dynamics on the boundary. 
Let us fix a domain $U$ with an incompressible contact type boundary such that the Reeb flow is non-degenerate. The non-degeneracy of the Reeb flow can be achieved by  a small perturbation of the Liouville vector field (or, equivalently, the contact form), see, e.g., \cite[p.47]{bourgeois2009survey}.  For a non degenerate domain we define the {\it  relative $n$-spectrum} of the boundary $\partial U$ in $M$:
\begin{defin}
	\begin{itemize}
	\item
	For $k\in\Z$, we denote by
	\begin{equation*}
	\spec_k(\partial U):=\left\{\int_\gamma\lambda\ :\ \begin{array}{c}
	\gamma\in\cP(\partial U),\\
	\exists u:D\rightarrow \partial U,\ 
	u|_{\partial D}=\gamma,\ 
	\indCZ_R(\gamma,u)=-k
	\end{array}\right\}
	\end{equation*}
	the action spectrum of $\partial U$ of index $-k$.
	\item
	Denote
	\begin{equation*}
	\spec_n(0):= \Big\{- \omega(A):A\in \pi_2(M),\ c_1(A)\in\{-n,\dots,0\}\Big\}.
	\end{equation*}
	Morally speaking, $\spec_n(0)$ is the ``$n$-spectrum of the zero function", namely, the set actions that can be attained by index $-n$ capped orbits of a non-degenerate perturbation of the zero function.
	
	\item
	We define the relative $n$-spectrum of $\partial U$ inside $M$ to be
	\begin{equation*}
	\spec_n(\partial U;M):=\spec_n(0)\cup  
	\bigcup_{\tiny{\begin{array}{c}
		{k\in\Z}, A\in\pi_2(M),\\
		c_1(A)=\lceil \frac{k-n}{2}\rceil 
		\end{array}}}
	\left\{-\spec_k(\partial U)-\omega(A)\right\}.
	\end{equation*}
	\end{itemize}
\end{defin}

\noindent We will show that the relative $n$-spectrum $\spec_n(\partial U;M)$ is related to the $n$-spectrum of radial Hamiltonians (see Definition~\ref{def:radial_hamiltonian}).
The sets $\spec_k(\partial U)$ and  $\spec_n(\partial U;M)$ depend on the choice of a Liouville vector field (or, equivalently, a contact form). We omit the Liouville vector field from the notation for the sake of brevity.
Our main goal for this section is to prove the following statement.
\begin{prop}\label{pro:killer1}
	Let $H:M\times S^1\rightarrow\R$ be a Hamiltonian supported in a domain $U$ with an incompressible contact type boundary such that the Reeb flow is non-degenerate. Assume that $c(H)>0$ and 
	\begin{equation}\label{eq:the_condition}
	\spec_n(\partial U;M)\cap (0,c(H)]=\emptyset.
	\end{equation}
	Then, there exists a Hamiltonian $K:M\rightarrow\R$ supported in $U$ such that $\|K\|_{C^0}= c(H)$ and $c(H+K)=0$.
\end{prop}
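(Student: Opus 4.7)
The plan is to adapt the construction of Proposition~\ref{pro:slow_killers}, letting the radial killer have arbitrarily large slope inside a thin Liouville collar contained in $U$. This introduces new non-constant 1-periodic orbits near $\partial U$, and the hypothesis $\spec_n(\partial U;M)\cap(0,c(H)]=\emptyset$ is designed precisely to prevent these orbits from interfering with the bifurcation argument.

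I fix a small $\epsilon>0$ (chosen so that $H$ vanishes on the collar $\partial U\times(1-\epsilon,1)$) and a smooth monotone profile $\chi_1:\R\to\R$ equal to $-1$ on $\{s\leq 1-\epsilon\}$, equal to $0$ on $\{s\geq 1\}$, and with $\chi_1'$ vanishing only in these flat pieces. Define
\[
K_1(x):=\begin{cases} -1 & x\in U\setminus(\partial U\times(1-\epsilon,1)),\\ \chi_1(s(x)) & x\in\partial U\times(1-\epsilon,1),\\ 0 & x\in M\setminus U,\end{cases}
\]
set $K_\tau:=\tau K_1$ and $H_\tau:=H+K_\tau$ for $\tau\in[0,c(H)]$. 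Then $K_\tau$ is supported in $U$ with $\|K_\tau\|_{C^0}=\tau$, the stability property yields $c(H_\tau)\in[c(H)-\tau,c(H)]$ and continuity of $\tau\mapsto c(H_\tau)$, and the target spectral killer $K:=K_{c(H)}$ satisfies $\supp K\subset U$ and $\|K\|_{C^0}=c(H)$. The proposition therefore reduces to showing $c(H_\tau)=c(H)-\tau$ for every $\tau\in[0,c(H)]$.

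The core step is an analysis, after a small non-degenerate perturbation, of the bifurcation diagram $\bigcup_\tau\{\tau\}\times\spec_n(H_\tau)$. Its generators split into three families. First, 1-periodic orbits of $H_\tau$ supported in $U\setminus(\partial U\times(1-\epsilon,1))$ coincide with those of $H$ but with action shifted by $-\tau$, contributing lines of slope $-1$; the principal one is $a(\tau)=c(H)-\tau$. Second, constant orbits on $M\setminus U$ are $\tau$-independent and contribute horizontal lines at the values of $\spec_n(0)$. Third, non-constant orbits in the collar are, by Lemma~\ref{lem:radial_Hamiltonians1}, parametrized by contractible Reeb orbits $\hat\gamma\subset\partial U$ (contractibility coming from the incompressibility of $\partial U$) at levels $s$ where $\tau\chi_1'(s)$ equals the Reeb period $T$. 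A Stokes-type computation in Liouville coordinates gives their action, with capping $u_0\#A$, as $\chi_\tau(s)-sT-\omega(A)$, while the standard Robbin--Salamon calculation for radial Hamiltonians pins down the index: $\indCZ_{H_\tau}(\gamma,u_0\#A)=-n$ precisely when $c_1(A)=\lceil(k-n)/2\rceil$, where $-k:=\indCZ_R(\hat\gamma,u_0)$. Differentiating under the constraint $\tau\chi_1'(s)=T$ shows that each shell-orbit curve has slope $\chi_1(s)\in[-1,0]$ and its tangent at any $\tau$ extrapolates to $-sT-\omega(A)$, which is within $O(\epsilon)$ of a corresponding element of $-\spec_k(\partial U)-\omega(A)\subset\spec_n(\partial U;M)$.

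Since $\spec_n(\partial U;M)$ is discrete (by rationality of $M$) and avoids $(0,c(H)]$, there is a positive gap around this interval; shrinking $\epsilon$ against this gap ensures every shell-orbit curve and every horizontal $\spec_n(0)$-line stays outside the strip $\{(\tau,a):0\leq\tau\leq c(H),\ 0<a\leq c(H)\}$. The continuous, spectrum-valued function $\tau\mapsto c(H_\tau)$ starts at $c(H)$ and satisfies $c(H_\tau)\geq c(H)-\tau$, hence it must follow the principal line $c(H)-\tau$ throughout, giving $c(H+K)=c(H_{c(H)})=0$. The main obstacle is the precise index-and-action matching in the shell analysis: one must verify that the Robbin--Salamon index shift between a Hamiltonian shell orbit and its underlying Reeb orbit reproduces exactly the ceiling $\lceil(k-n)/2\rceil$ appearing in the definition of $\spec_n(\partial U;M)$, handle possibly degenerate crossings where $\chi_\tau''(s)=0$ at a resonant $s$, and verify that the $\epsilon$-dependent deviation of each shell-orbit action from its extrapolated value stays within the gap afforded by the hypothesis.
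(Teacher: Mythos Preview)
Your approach follows the paper's bifurcation-diagram strategy, but the shell-orbit analysis is where it diverges and where the gaps lie.

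The paper's key move, which you omit, is to take $\chi_1$ to be a smoothing of a \emph{piecewise-linear} profile (constant $-1$, then linear, then constant $0$), so that non-constant periodic orbits of $K_\tau$ localize near the two corners. Near the upper corner (where $\chi_\tau''<0$) one has $\chi_\tau(s)\approx 0$, so the bifurcation curves are \emph{horizontal}; these are the orbits whose action is controlled by $\spec_n(\partial U;M)$ via Lemma~\ref{lem:radial_Hamiltonians2}. Near the lower corner (where $\chi_\tau''>0$) one has $\chi_\tau(s)\approx -\tau$, so the curves have slope $-1$ and are \emph{parallel} to the principal line $c(H)-\tau$; they cannot intersect it, so their index and action need no control at all. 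The hypothesis on $\spec_n(\partial U;M)$ is thus invoked only for the horizontal family.

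With your generic monotone profile, orbits are not localized: for each Reeb period $T$ there are two branches $s_\pm(\tau)$ on either side of the maximum of $\chi_1'$, with opposite signs of $\chi_1''$. Your index formula $c_1(A)=\lceil(k-n)/2\rceil$ is only the $\chi_1''<0$ case; on the $\chi_1''>0$ branch Lemma~\ref{lem:radial_Hamiltonians2} gives $\indRS=\indCZ_R+\tfrac12$ rather than $\indCZ_R-\tfrac12$, the matching $c_1(A)$ shifts, and the resulting value $-T-\omega(A)$ need not lie in $\spec_n(\partial U;M)$ as defined. Moreover, your claim that the tangent intercept $-sT-\omega(A)$ is ``within $O(\epsilon)$'' of a $\spec_n$-element is incorrect: the error is $(1-s)T\leq\epsilon T$, and since $T$ ranges up to $\tau\cdot\max\chi_1'\sim c(H)/\epsilon$, the product $\epsilon T$ is of order $c(H)$ and does not shrink as $\epsilon\to 0$. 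Finally, even granting a good intercept estimate, the intercept of each tangent line does not tell you the curve itself avoids the strip $(0,c(H)]$; a curve whose $\spec_n$-value lies above $c(H)$ has slope in $(-1,0)$ and can descend into the strip. What you actually need is that such curves cannot intersect the principal line---immediate in the paper's setup because the only non-horizontal curves have slope exactly $-1$, but not at all obvious for your generic profile.
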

\begin{rem}\label{rem:killer_for_degenerate}
	The non-degeneracy of the Reeb flow on $\partial U$ is required for the contact CZ to be defined. However, for possibly degenerate domains one can define a relative spectrum by 
	\begin{equation*}
	\spec(\partial U;M):=\{-T-\omega(A):T\in \{0\}\cup\spec(\partial U),\ A\in\pi_2(M)\}.
	\end{equation*}
	and construct spectral killers for Hamiltonians satisfying $\spec(\partial U;M)\cap (0,c(H)]=\emptyset$. The proof in this case is simpler than that of Proposition~\ref{pro:killer1} and does not require Lemma~\ref{lem:radial_Hamiltonians2} below. This observation will be used in the proof of Theorem~\ref{thm:max_ineq_rational} which concerns the max inequality on rational manifolds.
\end{rem}
We start by stating a simple corollary of Lemma~\ref{lem:radial_Hamiltonians1} from the previous section.
\begin{cor}\label{cor:action_radial_Hamiltonian}
	For a radial Hamiltonian $H=\chi(s)$, the action of a capped non-constant 1-periodic orbit $(\gamma,u)$  is given by
	\begin{equation}\label{eq:action_radial_Hamiltonian}
	\cA_H(\gamma,u)=\chi(s) - s\cdot \chi'(s)-\omega(A),
	\end{equation} 
	where $s=s(\gamma)$ is the  Liouville coordinate of the level set containing $\gamma$ and $A\in\pi_2(M)$ is such that $u=u_0\#A$ for a capping disk $u_0\subset \{s=s(\gamma)\}$.
\end{cor}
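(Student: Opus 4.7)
The plan is to compute the two terms of $\cA_H(\gamma,u)=\int_0^1 H(\gamma(t),t)\,dt-\int_u\omega$ separately, using Lemma~\ref{lem:radial_Hamiltonians1} which tells us that $\gamma$ lies entirely in a single level set of $s$ and is, up to reparametrization, a Reeb orbit on $\partial U$. The first term is immediate: since $\gamma$ sits in $\{s=s(\gamma)\}$, the integrand $H(\gamma(t),t)=\chi(s)$ is constant in $t$, giving $\int_0^1 H\,dt=\chi(s)$.

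For the second term, I would split the capping disk as $u=u_0\#A$, so that $\int_u\omega=\int_{u_0}\omega+\omega(A)$, reducing the task to showing $\int_{u_0}\omega=s\cdot\chi'(s)$. Because $u_0$ lies inside the level set $\{s=s(\gamma)\}=\psi^{\log s}(\partial U)$, which is contained in the tubular neighborhood $\cN(\partial U)$ where the Liouville coordinate is defined and $\omega=d\lambda$ with $\lambda=\iota_Y\omega$, Stokes' theorem applies to give $\int_{u_0}\omega=\int_{\partial u_0}\lambda=\int_\gamma\lambda$.

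It then remains to evaluate $\int_\gamma\lambda$. Using $\cL_Y\omega=\omega$ and Cartan's formula one obtains $\cL_Y\lambda=\lambda$, and hence $(\psi^\tau)^*\lambda=e^\tau\lambda$, i.e.\ $(\psi^{\log s})^*\lambda=s\cdot\lambda$. By Lemma~\ref{lem:radial_Hamiltonians1}, $\gamma(t)=\psi^{\log s}\hat\gamma(\chi'(s)t)$ for the Reeb orbit $\hat\gamma\in\cP(\partial U)$, so a direct change of variables gives
\begin{equation*}
\int_\gamma\lambda=\int_0^1\big((\psi^{\log s})^*\lambda\big)\big(\tfrac{d}{dt}\hat\gamma(\chi'(s)t)\big)\,dt = s\int_0^1\lambda\big(\chi'(s)R\big)\,dt = s\cdot\chi'(s),
\end{equation*}
where the last equality uses $\lambda(R)=1$. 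Putting the pieces together yields $\cA_H(\gamma,u)=\chi(s)-s\cdot\chi'(s)-\omega(A)$, as claimed.

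There is no essential obstacle here: the corollary is an elementary computation once Lemma~\ref{lem:radial_Hamiltonians1} is in hand. The only mild care is to justify the identity $(\psi^\tau)^*\lambda=e^\tau\lambda$ and to verify that $u_0$ lies inside the domain of definition of $\lambda$, both of which follow directly from the set-up in Section~\ref{subsec:preliminaries_Liouville_coord_and_Reeb_flow}.
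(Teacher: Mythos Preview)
Your proof is correct and follows essentially the same approach as the paper's: both compute $\int_{u_0}\omega$ by reducing to the Reeb action via Stokes and the scaling behavior of the Liouville flow. The only cosmetic difference is the order of operations—the paper first pulls $u_0$ back to $\partial U$ using $(\psi^{\log s})^*\omega=s\,\omega$ and then applies Stokes there (tracking the orientation via $\sign\chi'(s)$ and the identity $\int_{\hat\gamma}\lambda=|\chi'(s)|$), whereas you apply Stokes directly on the level set and then use $(\psi^{\log s})^*\lambda=s\,\lambda$ together with the parametrization $\gamma(t)=\psi^{\log s}\hat\gamma(\chi'(s)t)$, which absorbs the sign automatically.
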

\begin{proof}
	Recall that Lemma~\ref{lem:radial_Hamiltonians1} states that every non-constant 1-periodic orbit $\gamma$ of a radial Hamiltonian $H=\chi(s)$ corresponds to a Reeb orbit $\hat\gamma\in\cP(\partial U)$ whose action is $\int_{\hat\gamma}\lambda=|\chi'(s)|$. Notice that $\hat \gamma$ and  $\psi^{-\log s}\gamma$ have the same orientation if $\chi'(s)>0$ and opposite orientation if $\chi'(s)<0$. Let $\hat u_0$ be a capping disk of $\hat \gamma$ whose image coincides with $\psi^{-\log s} u_0$. Then,
	\begin{eqnarray}
	\cA_H(\gamma,u) &=& H(\gamma)-\int_u \omega = \chi(s) - \int_{u_0}\omega - \omega(A) 
	\nonumber\\	&=& 
	\chi(s) - \int_{\psi^{-\log s}u_0}\left(\psi^{\log s}\right)^*\omega-\omega(A)
	=\chi(s)-s\cdot \int_{\psi^{-\log s}u_0}\omega -\omega(A) \nonumber\\ 
	&=&\chi(s)-s\cdot \int_{\partial (\psi^{-\log s}u_0)}\lambda -\omega(A) 
	=\chi(s)-s\cdot \sign\chi'(s)\cdot \int_{\hat \gamma}\lambda - \omega(A) 
	\nonumber\\	&=& 
	\chi(s)-s\cdot\chi'(s) -\omega(A),\nonumber
	\end{eqnarray}
	where in the last equality we used equation~(\ref{eq:action_vs_dchi}) from Lemma~\ref{lem:radial_Hamiltonians1}.
\end{proof}

Since radial Hamiltonians are degenerate, we will perturb them into non-degenerate Hamiltonians.
After the perturbation, there may appear several periodic orbits, $\{\gamma_i\}$, in a small neighborhood of every degenerate orbit $\gamma$. In this case, the CZ index of the non-degenerate perturbed orbits is close to the RS index of the original degenerate orbit: 
\begin{equation}\label{eq:RS_after_pert}
|\indCZ_H(\gamma_i,u_i)-\indRS(\gamma,u)|\leq\frac{1}{2}\dim\ker ((d\varphi_H^1)_{\gamma(0)}-\id),
\end{equation} 
see, for example, section 3 of \cite{ishikawa2015spectral}.
The next lemma relates between the RS indices of 1-periodic orbits of radial Hamiltonians and the CZ indices of Reeb orbits. Moreover, it shows that generically, the kernel of $(\varphi_H^1)_{\gamma(0)}-\id$ is 1-dimensional.  
\begin{lemma}\label{lem:radial_Hamiltonians2}
	Let $H=\chi(s)$ be a radial Hamiltonian. For every non-constant 1-periodic orbit $\gamma$ of $H$ and a capping $u_0\subset \{s=s(\gamma)\}$ of $\gamma$, 
	\begin{equation}\label{eq:RS_vs_CZ_R}
		\indRS(\gamma,u_0)=\sign(\chi'(s))\cdot\indCZ_R(\hat \gamma,\hat u_0) +\frac{1}{2}\sign\left(\chi''(s)\right)
	\end{equation}
	where $\hat \gamma(t):=\psi^{-\log s}\gamma(t/\chi'(s))\in\cP(\partial U)$ is the Reeb orbit conjugate to $\gamma$, $\hat u_0$ is a capping disk of $\hat \gamma$ whose image coincides with $\psi^{-\log s}u_0\subset \partial U$ and the sign of zero is considered to be zero.
	Moreover, if $\chi''(s)\neq 0$ for $s=s(\gamma)$, then $\dim\ker ((d\varphi_H^1)_{\gamma(0)}-\id)=1$.
\end{lemma}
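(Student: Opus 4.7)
The plan is to exploit the Liouville coordinates $(y,s)$ near $\partial U$ and decompose $d\varphi_H^t$ in blocks adapted to the splitting $TM = \xi \oplus \R\langle R\rangle \oplus \R\langle \partial_s\rangle$, which is a symplectic direct sum with respect to $\omega = ds\wedge\lambda + s\,d\lambda$. A direct computation using this formula, together with $\lambda(R) = 1$ and $\iota_R d\lambda = 0$, shows that $X_H = \chi'(s)R$, hence $\varphi_H^t(y,s) = (\varphi_R^{\chi'(s)t}(y), s)$. Differentiating in $(y,s)$, and using that the Reeb flow preserves $\xi$ and fixes $R$, yields
\[
d\varphi_H^t = \begin{pmatrix} d\varphi_R^{\chi'(s)t}|_\xi & 0 & 0 \\ 0 & 1 & t\chi''(s) \\ 0 & 0 & 1 \end{pmatrix}.
\]
I would trivialize $\xi$ along $\hat u_0 = \psi^{-\log s}u_0 \subset \partial U$ and extend via the canonical sections $R$ and $\partial_s$ to a trivialization of $TM$ along $u_0$; this is legitimate precisely because $u_0$ lies in a single level set of the Liouville coordinate, so $\partial_s$ and $R$ are nowhere-vanishing sections over $u_0$.

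By the product property of $\indRS$, the index splits into contributions from the top $(2n-2)\times(2n-2)$ block and the bottom $2\times 2$ block $Q(t) = \bigl(\begin{smallmatrix} 1 & \chi''(s)t \\ 0 & 1 \end{smallmatrix}\bigr)$. For the top block, when $\chi'(s)>0$ the path $\{d\varphi_R^{\chi'(s)t}|_\xi\}_{t\in[0,1]}$ is an orientation-preserving reparametrization of the path $\{d\varphi_R^\tau|_\xi\}_{\tau\in[0,\chi'(s)]}$ defining $\indCZ_R(\hat\gamma,\hat u_0)$; when $\chi'(s)<0$ it is instead its pointwise inverse. Using that $\Phi^{-1}$ and $\Phi^T$ are conjugate by the constant $J_0$, the conjugation and inverse axioms combine to give $\indRS(\Phi^{-1}) = -\indRS(\Phi)$. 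Both cases assemble into a contribution of $\sign(\chi'(s))\cdot\indCZ_R(\hat\gamma,\hat u_0)$.

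The main technical obstacle is computing $\indRS(Q) = \tfrac{1}{2}\sign(\chi''(s))$. Every $t\in[0,1]$ is a crossing of $Q$ (since $\det(Q(t)-\id)\equiv 0$) and all of them are degenerate, so the formula in the excerpt cannot be applied directly. I would perturb $Q$ via a homotopy with fixed endpoints to a path $Q_\varepsilon$ whose only crossing in $[0,1]$ is a regular crossing at $t=0$, for instance by conjugating $Q$ with a small elliptic arc generated by $\varepsilon J_0$ and concatenating a short corrective path to preserve $Q(1)$. A direct computation of the crossing form at $t=0$ of the perturbed path, using the sign of $\omega$ on $\mathrm{span}(R,\partial_s)$ fixed by $\omega(R,\partial_s)=1$, then shows its signature equals $\sign(\chi''(s))$, so by formula~(\ref{eq:RS_ind}) the contribution is $\tfrac{1}{2}\sign(\chi''(s))$.

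The moreover statement is immediate from the block form: $\ker\bigl((d\varphi_H^1)_{\gamma(0)}-\id\bigr) = \ker(d\varphi_R^{\chi'(s)}|_\xi-\id)\oplus\ker(Q(1)-\id)$. The first summand is trivial because the non-degeneracy of the Reeb flow on $\partial U$ forces $d\varphi_R^T|_\xi - \id$ to be invertible at the period $T = |\chi'(s)|$ of $\hat\gamma$; the second summand is one-dimensional, spanned by $R$, exactly when $\chi''(s)\neq 0$.
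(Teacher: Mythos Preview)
Your approach is essentially identical to the paper's: both block-decompose $d\varphi_H^t$ along $\xi$ and its symplectic complement, obtain the $\xi$-contribution $\sign(\chi'(s))\cdot\indCZ_R(\hat\gamma,\hat u_0)$ via the conjugation/inverse axioms, and then handle the remaining $2\times2$ shear separately (the paper uses the frame $(Y,R)$ and cites Proposition~4.9 of \cite{gutt2014generalized} rather than perturbing by hand). One slip to fix: with $\omega=ds\wedge\lambda+s\,d\lambda$ one has $\omega(R,\partial_s)=-1$, not $+1$ (equivalently $Y=s\partial_s$ and $\omega(Y,R)=\lambda(R)=1$), so in your ordered basis $(R,\partial_s)$ the crossing-form signature of $Q(t)=\bigl(\begin{smallmatrix}1&t\chi''(s)\\0&1\end{smallmatrix}\bigr)$ carries the opposite sign; reordering to $(\partial_s,R)$---which is the paper's $(Y,R)$ up to scaling---gives the lower-triangular shear and the correct contribution $\tfrac{1}{2}\sign(\chi''(s))$.
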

\begin{proof}
	In order to relate the two different indices we show that the restrictions of the linearized flows to the contact distribution are conjugated. This can be done by differentiating the conjugation of the Hamiltonian and Reeb flows, given in (\ref{eq:Ham_Reeb_flow_conj}). Extending the contact distribution $\xi$ to a neighborhood of $\partial U$ using the Liouville flow $\psi^{\log s}$, we have
	\begin{eqnarray*}
	d\varphi^t_H|_\xi &=& d\left(\psi^{\log s}\circ \varphi_R^{\chi'(s)\cdot t}\circ\psi^{-\log s}\right)\Big|_{\xi}\\
	&=& d\psi^{\log s} d\varphi_R^{\chi'(s)\cdot t}d\psi^{-\log s}|_\xi,
	\end{eqnarray*}
	where the last equality follows from the contact distribution $\xi$ is tangent to level sets of the Liouville coordinate $s$. We therefore conclude that the linearized Hamiltonian and Reeb flows are conjugated on the contact distribution. By the conjugation property of the RS index for paths of matrices, the RS index of the restriction of $d\varphi_H^t$ to $\xi$ is equal to $\sign(\chi'(s))\cdot\indCZ_R(\hat \gamma,\hat u_0)$. By the product property of the RS index, it remains to compute the index of the restriction of the linearized Hamiltonian flow to $span\{Y,R\}$. We remind that the Reeb vector field is defined wherever the Liouville vector field is, by equations (\ref{eq:Reeb_def}).  
	Since $X_H$ is proportional to $R$, and since $H$ is autonomous, the linearized Hamiltonian flow preserves $R$: $d\varphi^t_H (R)=d\varphi^t_H (\frac{1}{\chi'(s)}X_H)= \frac{1}{\chi'(s)} X_H \circ \varphi^t_H=R\circ \varphi^t_H$. In order to compute the linearized Hamiltonian flow on the Liouville vector field, let us first compute the conjugation of $\varphi_H^t$ under the Liouville flow. Let $x\in U$ and abbreviate $s=s(x)$. Using the conjugation between the Hamiltonian and the Reeb flows (\ref{eq:Ham_Reeb_flow_conj}) from Lemma~\ref{lem:radial_Hamiltonians1}, we have 
	\begin{eqnarray}\label{eq:Ham_flow_conj}
	\psi^{-\tau}\circ\varphi_H^t\circ\psi^\tau(x) &=& \psi^{-\tau}\psi^{\log (se^\tau)} \varphi_R^{\chi'(se^\tau)\cdot t}\psi^{-\log (se^\tau)}\psi^\tau(x)\nonumber\\
	&=& \psi^{\log s} \varphi_R^{\chi'(se^\tau)\cdot t}\psi^{-\log s}(x)\nonumber\\
	&=& \varphi_H^{\frac{\chi'(se^\tau)}{\chi'(s)}\cdot t}(x). 
	\end{eqnarray}
	Using the above, 
	\begin{eqnarray*}
	d\varphi^t_H (Y) &=& \frac{d}{d\tau}\Big|_{\tau=0} \varphi_H^t\circ \psi^\tau
	\overset{(\ref{eq:Ham_flow_conj})}{=} \frac{d}{d\tau}\Big|_{\tau=0}\psi^{\tau}\circ \varphi_H^{\frac{\chi'(se^\tau)}{\chi'(s)}\cdot t}\\
	&=& Y\circ \varphi_H^t +d\psi^\tau X_H\cdot  \frac{se^{\tau}\chi''(se^\tau)}{\chi'(s)}\cdot t\big|_{\tau=0}\\
	&=& Y\circ \varphi_H^t + t\cdot\frac{s\chi''(s)}{\chi'(s)}\cdot X_H =  Y\circ \varphi_H^t + t\cdot{s\chi''(s)}\cdot R.
	\end{eqnarray*}
	
	Fix a trivialization $\hat T:\im(\hat u_0)\times \R^{2n-2}\rightarrow \xi|_{\im(\hat u_0)}$ of the contact distribution $\xi$ along $\im (\hat u_0)\subset \partial U$, then $d\psi^{\log s} \hat T$ is a trivialization of $\xi$ along the image of $u_0$. We can complete it to a trivialization $T:\im(u_0)\times \R^{2n}\rightarrow TM|_{\im(u_0)}$ of $TM$ along the image of $u_0$, using the vector fields $Y$ and $R$, namely $T(x, e_{2n-1})= Y(x)$, $T(x, e_{2n})= R(x)$.
	It follows from the above computations that in this trivialization the matrices $\Phi_H(t)$ representing $d\varphi_H^t$ along $\gamma$ are
	\begin{equation*}
	\Phi_H(t) = \left(\begin{array}{ccc}
	\Phi_R(s\chi'(s)\cdot 	t) &0 &0\\
	0& 1 & 0\\
	0& t\cdot s\chi''(s) &1
	\end{array}\right),
	\end{equation*}
	where $\Phi_R(t)$ are the matrices representing the restriction of $d\varphi_R^t$ to $\xi$ in $T$. By the product property of the RS index, we have
	\begin{equation}\label{eq:CZ_additivity}
	\indRS(\Phi_H) = \sign(\chi'(s))\cdot\indRS(\Phi_R) + \indRS(\Psi),
	\end{equation}
	where 
	\begin{equation*}
	\Psi(t):=\left( \begin{array}{cc}
	1 & 0\\
	t\cdot {s\chi''(s)} &1
	\end{array}\right).
	\end{equation*}
	The path of symplectic matrices $\Psi(t)$ is degenerate. By Proposition~4.9 from \cite{gutt2014generalized}, the RS index of $\Psi$ is\footnote{More accurately, one should apply Proposition~4.9 from \cite{gutt2014generalized} to the path of transposed matrices $\Psi^T$ and use the inverse property of the RS index.}
	 $$
	 \indRS (\Psi) =\sign\left({s\chi''(s)}\right)/2=\sign (\chi''(s))/2.
	 $$
	 Together with equation (\ref{eq:CZ_additivity}), this implies that $\indRS(\gamma,u_0)=\sign(\chi'(s))\cdot \indCZ_R(\hat \gamma,\hat u_0)+\frac{1}{2}\sign(\chi''(s))$. This proves formula (\ref{eq:RS_vs_CZ_R}). It remains to prove that, if $\chi''(s)\neq 0$, the  kernel of $(d\varphi_H^1)_{\gamma(0)}-\id$ is one-dimensional. Indeed, since $\hat \gamma$ is non-degenerate, the kernel of $\Phi_H(1)-\id$ coincides with that of $\Psi(1)-\id$, which is one-dimensional when $\chi''(s)\neq 0$.
\end{proof}

Having established Lemma~\ref{lem:radial_Hamiltonians2}, we are now ready to prove Proposition~\ref{pro:killer1}.
\begin{proof}[Proof of Proposition~\ref{pro:killer1}]
	Let $H$ be a Hamiltonian supported in $U$. In what follows, we construct a continuous deformation $H_\tau$ of the Hamiltonian $H$ and, after perturbing into non-degenerate Hamiltonians, follow the corresponding bifurcation diagram of the $n$-spectrum  (that is, the actions of capped orbits of index $-n$). We remind that by the spectrality property for non-degenerate Hamiltonians, the spectral invariant lies in the $n$-spectrum.
	
	Let $\cN(\partial U)\cong\partial U\times(1-3\epsilon, 1+\epsilon)$ be a small enough neighborhood  of the boundary on which the Liouville coordinate is defined and such that $H|_{\cN(\partial U)}=0$.
	\begin{figure}
		\centering
		\includegraphics[scale=0.7]{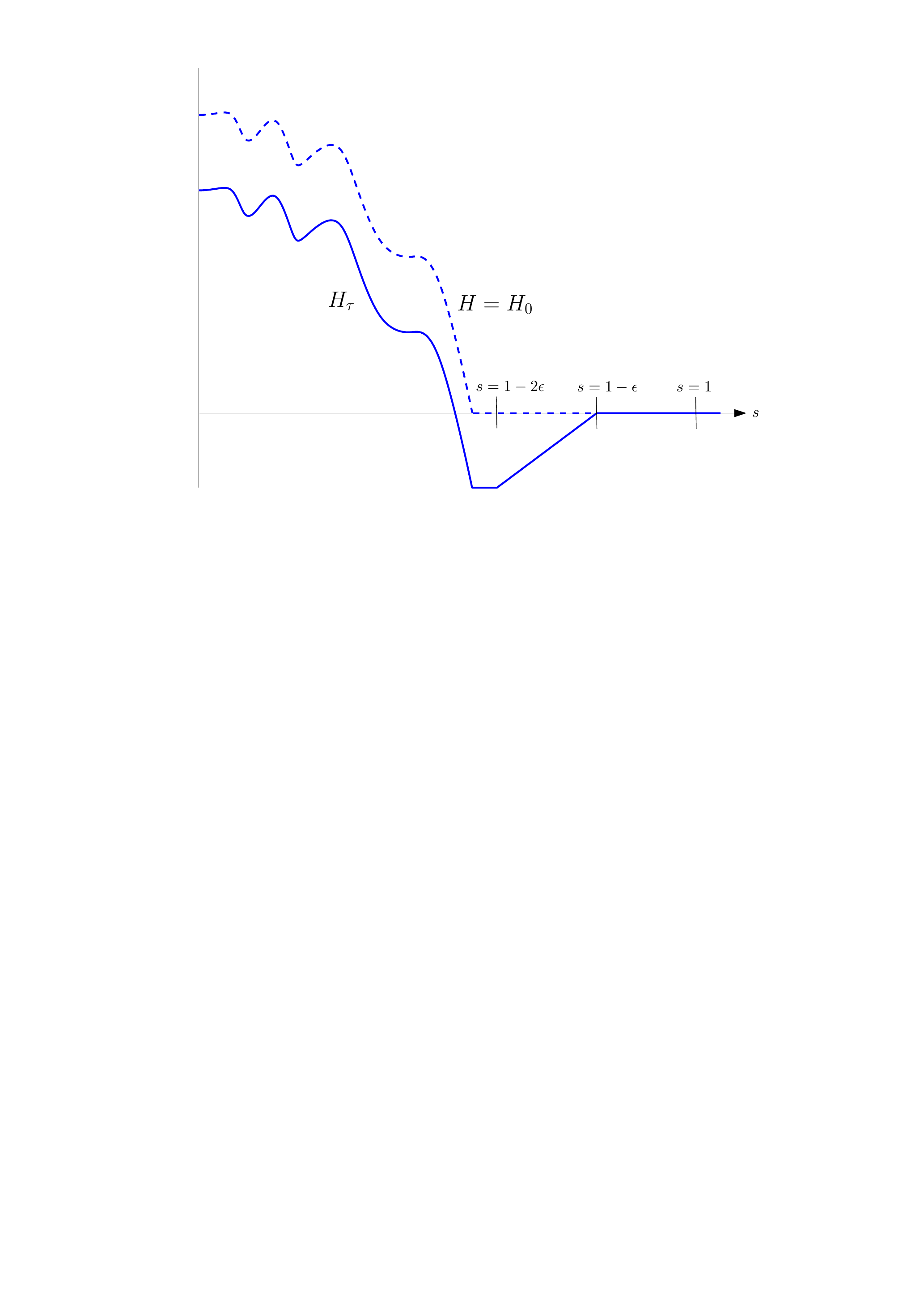}
		\caption{\small{An illustration of the graphs of $H_\tau$ (solid line) and $H$ (dashed line) in the radial coordinate $s$. 
		}}
		\label{fig:deformation}
	\end{figure}
	\noindent
	Consider the autonomous radial Hamiltonians defined by
	\begin{equation}
	K_1(x) :=\begin{cases}
	-1 & x\in U\setminus\big(\partial U\times (1-2\epsilon,1)\big),\\
	\chi_1(s(x)) & x\in\partial U\times(1-2\epsilon,1-\epsilon),\\
	0 & \text{elsewhere}.
	\end{cases}
	\end{equation}
	where $\chi_1:\R\rightarrow\R$ is a smooth approximation of the continuous piecewise linear function taking the value $-1$ for $s\leq 1-2\epsilon$ and $0$ for $s\geq1-\epsilon$, and which coincides with the piecewise linear function outside of a neighborhood of the corners at $1-2\epsilon$ and $1-\epsilon$. We also require that $\chi_1''(s)$ will be strictly negative near the corners $1-2\epsilon$ and $1-\epsilon$. Consider the family of radial Hamiltonians given by $\{K_\tau:=\tau\cdot K_1\}_{\tau\in[0,c(H)]}$ and set $\chi_\tau:=\tau\cdot \chi_1$.
	Note that since the Reeb spectrum is a discrete set, for a generic $\tau\in [0,c(H)]$ non-constant 1-periodic orbits of $K_\tau$ appear only near $s=1-2\epsilon$ and $s=1-\epsilon$.
	We define the deformation of the Hamiltonian $H$ to be $\{H_\tau := H+K_\tau\}_{[0,c(H)]}$, see Figure~\ref{fig:deformation}. Let us follow the change of the $n$-spectrum of a non-degenerate perturbation of $\{H_\tau\}_{\tau\geq 0}$. We will show that the corresponding bifurcation diagram is (approximately) composed of horizontal lines, corresponding to orbits appearing for $s> 1-3\epsilon/2$, and of lines with slope $-1$, corresponding to capped orbit with $s<1-3\epsilon/2$, see Figure~\ref{fig:bifurcation_diag}. 
	Let us split into four regions:
	\begin{itemize}
		\item In $M\setminus {U}$: Here the $H_\tau$ is zero for all $\tau$. After perturbing into non-degenerate Hamiltonians, the actions of capped orbits with CZ index $-n$ in this region are contained in a small neighborhood of $\spec_n(0)\subset \spec_n(\partial U;M)$ and do not change with $\tau$.
		
		\item In $U\setminus\cN(\partial U)$: Here $H_\tau=H-\tau$, and therefore the periodic orbits remain the same as $\tau$ changes, and the action of each orbit decreases linearly: 
		$$
		\cA_{H_\tau}(\gamma,u) = \int_0^1 H_\tau\circ\gamma\ dt- \int_u \omega = \int_0^1 H\circ\gamma\ dt-\tau- \int_u \omega=\cA_H(\gamma,u)-\tau.
		$$
		Therefore, the action spectrum of $H_\tau$ in this region changes linearly in $\tau$, with slope $-1$.
		After perturbing $H$ into a non-degenerate Hamiltonian in this region $H_\tau= H-\tau$ are non-degenerate as well, and the action spectrum consists of lines with slope $-1$.
		
		\item Near $s=1-2\epsilon$: Here  $H_\tau$ coincides with the radial Hamiltonian $K_\tau=\chi_\tau(s)$. Lemma~\ref{lem:radial_Hamiltonians1} states that every non-constant 1-periodic orbit $\gamma$ of $K_\tau$  corresponds to a Reeb orbit of action $\chi_\tau'(s(\gamma))$. Since $\chi_\tau'(s)$ takes values between $0$ and $\tau/\epsilon$ in this region, $H_\tau$ may admit more 1-periodic orbits as $\tau$ grows. The action of each orbit, once it appeared, decreases approximately  linearly in $\tau$, since the value of $H_\tau$ does. Indeed, by Corollary~\ref{cor:action_radial_Hamiltonian} the action with respect to a radial Hamiltonian is given by
		$$
		\cA_{H_\tau}(\gamma,u)=\chi_\tau(s)-\chi_\tau'(s)\cdot s -\omega(A) \approx -\tau- (1-2\epsilon)\cdot \int_{\hat\gamma} \lambda-\omega(A), 
		$$	
		where $s=s(\gamma)$, $\hat\gamma(t)=\psi^{-\log s}\gamma(t/\chi'(s))\in\cP(\partial U)$ and $A:=u\#\bar u_0\in\pi_2(M)$. 
		After perturbing $H_\tau$ into non-degenerate Hamiltonians, the actions attained by orbits in this region remain in a neighborhood of lines with slope $-1$ with respect to $\tau$.
		
		\begin{figure}
			\centering
			\includegraphics[scale=0.5]{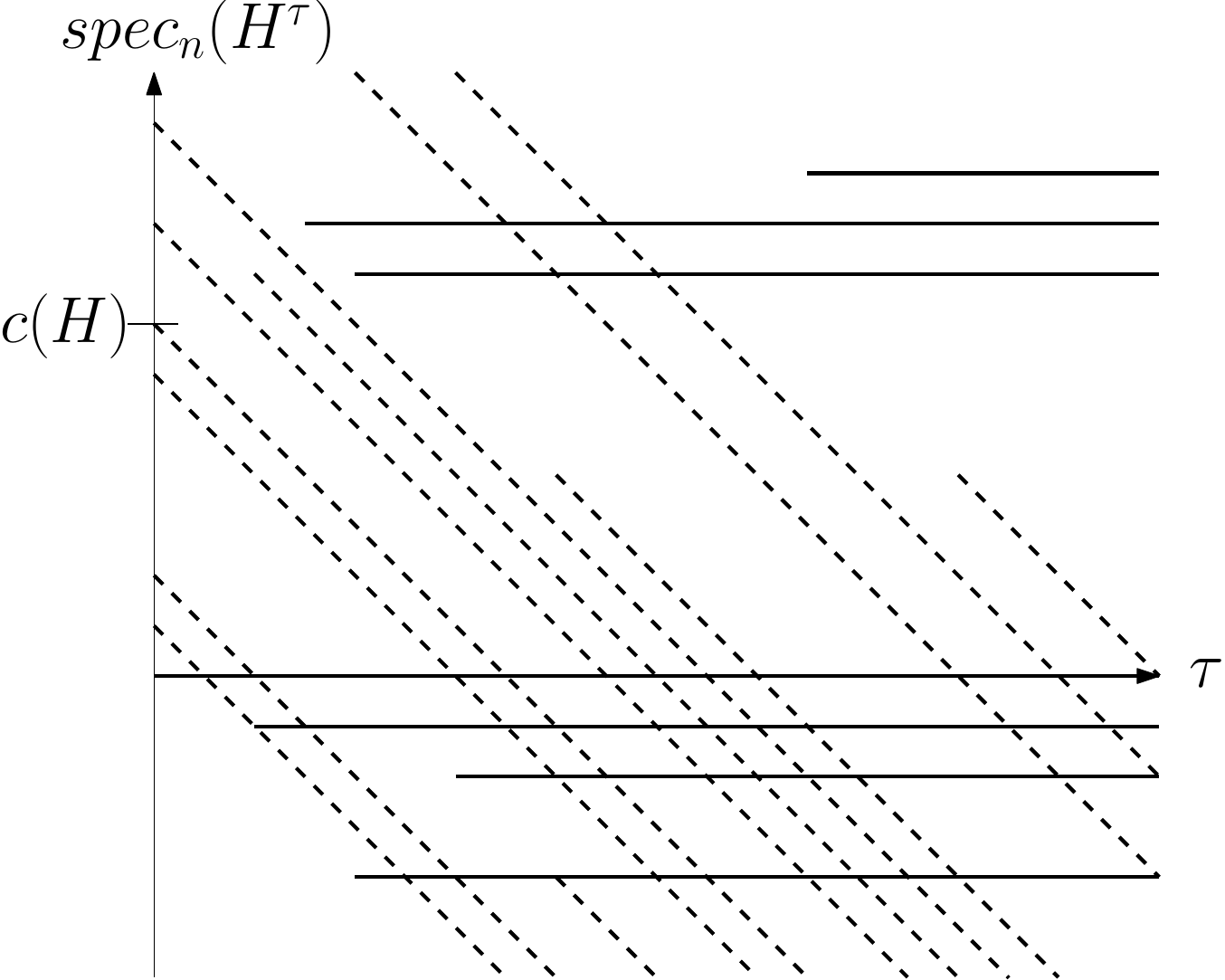}
			\caption{\small{An illustration of the $n$-spectrum of a non-degenerate perturbation of $H_\tau$. The dashed lines correspond to actions of orbits in $\{s<1-3\epsilon/2,1\}$ while the solid lines correspond to actions of  orbits in $\{s>1-3\epsilon/2\}$.}}
			\label{fig:bifurcation_diag}
		\end{figure}	\noindent	
		\item Near $s=1-\epsilon$: Here again $H_\tau$ coincides with the radial Hamiltonian $K_\tau =\chi_\tau(s)$ and by Lemma~\ref{lem:radial_Hamiltonians1}, non-constant 1-periodic orbits correspond to Reeb orbits of action $\chi_\tau'(s)$, which takes values between $0$ and $\tau/\epsilon$. As before, $H_\tau$ may admit more 1-periodic orbits as $\tau$ grows, but this time the action of each orbit, once it appeared, remains approximately constant when $\tau$ varies: 
		$$
		\cA_{H_\tau}(\gamma,u)=\chi_\tau(s)-\chi_\tau'(s)\cdot s -\omega(A)\approx 0-(1-\epsilon)\cdot\int_{\hat\gamma} \lambda -\omega(A),
		$$ 
		where again $s=s(\gamma)$, $\hat \gamma(t)=\psi^{-\log s}\gamma(t/\chi'(s))\in \cP(\partial U)$ and $A:=u\#\bar u_0\in\pi_2(M)$. Let us now show that, after perturbing $H_\tau$ into non-degenerate Hamiltonians, the actions of capped orbits with index $-n$ lie in a small neighborhood of the relative $n$-spectrum $\spec_n(\partial U;M)$. For this end, we need to compare the RS index of 1-periodic orbits of $H_\tau$ with the contact CZ indices of the corresponding Reeb orbits.    
		By our choice of $\chi_\tau$, its first derivative is strictly decreasing in this region, namely, $\chi_\tau''<0$, we can use Lemma~\ref{lem:radial_Hamiltonians2} to conclude that the  RS index of $(\gamma,u_0)$ and the contact CZ index of $(\hat \gamma,\hat u_0)$ are related by $\indRS(\gamma,u_0) =\indCZ_R(\hat \gamma,\hat u_0)-\frac{1}{2}$. 
		After perturbing $H_\tau$ into a non-degenerate Hamiltonian, every non-degenerate capped orbit $(\gamma',u_0')$ must have CZ index close to the RS index of $(\gamma,u_0)$. More accurately, since $\chi_\tau''\neq 0$, Lemma~\ref{lem:radial_Hamiltonians2} guarantees that the kernel of $d\varphi_H^t(\gamma(0))-\id$ is one-dimensional and, by (\ref{eq:RS_after_pert}),
		$|\indCZ_H(\gamma',u_0')-\indRS(\gamma,u_0)|\leq \frac{1}{2}$. We therefore conclude that capped orbits of index $-n$ could appear only for a capping $u'=u_0'\#A$ for $A\in\pi_2(M)$ such that 
		\begin{eqnarray}
		\frac{1}{2}&\geq& |\indCZ_H(\gamma',u_0')-\indRS(\gamma,u_0)| = |-n-2c_1(A)-\indRS(\gamma,u_0)| \nonumber\\
		&=& \Big|-n-2c_1(A) -\indCZ_R(\hat \gamma,\hat u_0)+\frac{1}{2} \Big|. \nonumber
		\end{eqnarray}
		Since $c_1$ and $\indCZ_R$ take integer values, this is equivalent to 
		$$
		c_1(A)=\Big\lceil \frac{-\indCZ_R(\hat \gamma,\hat u_0)-n}{2}\Big\rceil,
		$$
		which implies that the action of the index $-n$ orbit $(\gamma',u')$ is approximately $-\int_{\hat\gamma} \lambda -\omega(A) \in\spec_n(\partial U;M)$.
	\end{itemize}
	
	This analysis shows that the bifurcation diagram corresponding to the $n$-spectrum of a non-degenerate perturbation of $\{H_\tau\}_\tau$ is composed of (neighborhoods of) decreasing lines of slope $-1$, corresponding to orbits in $\{s<1-3\epsilon/2\}$, and of (neighborhoods of) horizontal lines with values in $\spec_n(\partial U;M)$, corresponding to orbits in $\{s>1-3\epsilon/2\}$, as illustrated in Figure~\ref{fig:bifurcation_diag}. 

	The condition that $\spec_n(\partial U;M)$ does not intersect $(0,c(H)]$, namely, (\ref{eq:the_condition}), guarantees that there are no horizontal lines in the action window $(0,c(H)]$. As a consequence, the bifurcation diagram contains no intersections in this window. By the continuity of $c(H_\tau)$ (with respect to the parameter $\tau$) the spectral invariant $c(H_\tau)$ must move along a single line, which implies that $c(H_\tau)=c(H)-\tau$ for $\tau\leq \tau_0:=c(H)$. In particular, $c(H_{\tau_0}) =0$. 	
	This proves the proposition for $K:=K_{\tau_0}$, as $c(H+K)=c(H_{\tau_0}) =0$ and $\|K\|_{C^0}= \tau_0= c(H)$. 
\end{proof}

\section{Deducing the max inequality for various settings.}\label{sec:proving_max_ineq}
Proposition~\ref{pro:killer1}, together with Claim~\ref{clm:max_ineq}, imply that the max inequality (\ref{eq:max_ineq}) holds for any collection of  Hamiltonians $H_1,\dots,H_N$, supported in disjoint domains $U_1,\dots,U_N$ with incompressible contact type boundaries, respectively, provided that condition (\ref{eq:the_condition}) is satisfied for each pair $(H_i, U_i)$. Namely, that for each $i$, the {relative} $n$-spectrum $\spec_n(\partial U_i;M)$ does not intersect the interval $(0,c(H_i)]$.
In this section we prove Theorems \ref{thm:max_ineq_rational}, \ref{thm:max_ineq_neg_monotone} and \ref{thm:max_ineq_pos_monotone}, by showing that condition (\ref{eq:the_condition}) is satisfied under the assumptions stated in the theorems.
Before that, let us show that condition (\ref{eq:the_condition}) always holds on symplectically aspherical manifolds.
 
\subsection{Symplectically aspherical manifolds.} 
When $(M,\omega)$ is symplectically aspherical, namely, $\omega|_{\pi_2(M)}=0$, the max inequality (\ref{eq:max_ineq}) holds for every Hamiltonians  $H_1,\dots,H_N$, supported in disjoint domains $U_1,\dots,U_N$ with incompressible contact type boundaries. Indeed, given a domain $U$ with  incompressible contact type boundary, the fact that $\omega$ vanishes on $\pi_2(M)$ implies that
\begin{equation*}
\spec_n(\partial U;M)\subset -\spec(\partial U)\cup\{0\} \subset(-\infty,0]
\end{equation*}
and in particular $\spec_n(\partial U;M)$ does not intersect the interval $(0,c(H)]$ for any Hamiltonian $H$ with $c(H)>0$. Therefore condition (\ref{eq:the_condition}) is satisfied with no additional assumptions on $U$.

We remind that the max inequality is a weaker statement than the max formula proved by Humili\`ere, Le Roux and Seyfaddini in \cite{humiliere2016towards}, which states that the spectral invariant of the sum of Hamiltonians supported in disjoint incompressible Liouville domains on symplectically aspherical manifolds is equal to the maximal spectral invariant of the summands.

\subsection{Rational manifolds.}
Let $(M,\omega)$ be a rational symplectic manifold, namely  $\omega(\pi_2(M))=\kappa\Z$ for some $\kappa\in\R$. In this section we prove that the max inequality (\ref{eq:max_ineq}) holds for Hamiltonians $H_i$ supported in disjoint domains $U_i$ with incompressible contact type boundaries, if $\spec(\partial U_i)\subset T_i\Z$ for some $T_i|\kappa$ and  $c(H_i)<T_i$, respectively.

\begin{proof}[Proof of Theorem~\ref{thm:max_ineq_rational}]
	Let $U\subset M$ be a domain with an incompressible contact type boundary and suppose there exists $T|\kappa$ such that $\spec(\partial U)\subset T\Z$. In order to be able to apply Proposition~\ref{pro:killer1} and Remark~\ref{rem:killer_for_degenerate}, we need to show that the relative spectrum $\spec(\partial U;M)$ does not intersect the interval $(0,c(H)]$ for every Hamiltonian $H$ supported in $U$ with $c(H)<T$. The max inequality for such Hamiltonians will then follow from Claim~\ref{clm:max_ineq}. 
	When $\spec(\partial U)\subset T\Z$, the relative spectrum $\spec(\partial U;M)$ is contained in the set $-\{0\}\cup\spec(\partial U)+\kappa \Z\subset T\Z+\kappa \Z$.  Since $T|\kappa$, the relative spectrum is contained in the lattice $T\Z$, and the intersection $\spec(\partial U;M)\cap (0,c(H)]$ is empty for every Hamiltonian with $c(H)<T$.
\end{proof}

\subsection{Negatively monotone manifolds.}
Let $(M,\omega)$ be a negatively monotone symplectic manifold, namely, $\omega=\kappa\cdot c_1$ on ${\pi_2(M)}$ for some $\kappa\leq 0$. We now prove Theorem~\ref{thm:max_ineq_neg_monotone}, which states that the max inequality (\ref{eq:max_ineq}) holds for Hamiltonians supported in disjoint domains with incompressible contact type boundaries, such that the contact CZ indices of the Reeb orbits are all non-negative.  

\begin{proof}[Proof of Theorem~\ref{thm:max_ineq_neg_monotone}]
Let $U\subset M$ be a domain with an incompressible contact type boundary, such that the contact CZ index of every Reeb orbit on $\partial U$ is non-negative, namely
\begin{equation*}
\indCZ_R(\gamma,u_0) \geq 0 \text{ for all }\gamma\in\cP(\partial U) \text{ and } u_0\subset \partial U.
\end{equation*}
In what follows we show that the relative $n$-spectrum $\spec_n(\partial U;M)$ is non-positive and in particular does not intersect the interval $(0,c(H)]$ for every Hamiltonian $H$ supported in $U$. This will establish condition (\ref{eq:the_condition}) and will enable us to  conclude the max inequality from  Proposition~\ref{pro:killer1} and Claim~\ref{clm:max_ineq}. 
The relative $n$-spectrum $\spec_n(\partial U;M)$ contains terms coming from the action spectrum of the zero function, $\spec_n(0)$, and terms coming from actions of Reeb orbits. Starting with $\spec_n(0)$, it is composed of $-\omega(A)$ for $A\in \pi_2(M)$ such that $c_1(A)\in\{-n,\dots,0\}$. In this case, $-\omega(A)=-\kappa c_1(A)\leq 0$, and the spectrum is indeed non-positive. The rest of the elements in the relative $n$-spectrum are of the form $-\int_{ \gamma}\lambda -\omega(A)$, where $A$ is such that  $c_1(A)=\big\lceil \frac{k-n}{2}\big\rceil$ and  $k=-\indCZ_R(\gamma,u_0)$ for some Reeb orbit $\gamma\in\cP(\partial U)$ with respect to a capping disk $u_0\subset\partial U$. Since we assumed that the contact CZ index of every capped Reeb orbit is non-negative, $k$ is non-positive. Therefore, the Chern class of $A$ is non-positive as well, i.e., $c_1(A)\leq 0$. We conclude that $-\int_\gamma \lambda-\omega(A) = -\int_\gamma \lambda-\kappa\cdot c_1(A)\leq 0$ as required. 
\end{proof}

\subsection{Positively monotone manifolds.}
Let $(M,\omega)$ be a positively monotone symplectic manifold, namely, $\omega=\kappa\cdot c_1$ on ${\pi_2(M)}$ for some $\kappa> 0$, and assume in addition that its dimension is greater than 2. We now prove Theorem~\ref{thm:max_ineq_pos_monotone}, which states that the max inequality (\ref{eq:max_ineq}) holds for Hamiltonians $H_i$ supported in disjoint domains $U_i$ with incompressible dynamically convex boundaries, such that for each $i$, $C(U_i)\leq \kappa$ and $c(H_i)<\kappa$.

\begin{proof}[Proof of Theorem~\ref{thm:max_ineq_pos_monotone}]
	Let $U\subset M$ be a domain with an incompressible dynamically convex boundary, such that $C(U)\leq \kappa$, where $C(U)$ is the invariant from Definition~\ref{def:ratio_invariant}. In what follows we show that $\spec_n(\partial U;M)$ does not intersect the interval $(0,\kappa)$. As a consequence, condition (\ref{eq:the_condition})  will follow for every Hamiltonian $H$ supported in $U$, such that $c(H)<\kappa$. 
	Starting with $\spec_n(0)$, which is the first component of $\spec_n(\partial U;M)$, we see that $-\omega(A) = -\kappa c_1(A)\in\{0,\kappa,\dots, n\kappa\}$, and therefore  $\spec_n(0)\cap(0,\kappa)=\emptyset$.
	The rest of the elements of the relative spectrum $\spec_n(\partial U;M)$ are of the form $-\int_\gamma\lambda  - \omega(A)$, where $\gamma$ is a periodic Reeb orbit and $A\in\pi_2(M)$ is such that 
	$$
	c_1(A)=\Big\lceil\frac{-\indCZ_R(\gamma, u_0)-n}{2}\Big\rceil
	,
	$$
	for a capping disk $ u_0\subset\partial U$ of $\gamma$.
	Recalling that the invariant $C(U)$ was defined to be the supremum over ratios of the form $2\int_{\gamma}\lambda/(\indCZ_R(\gamma,u_0) - n+1)$, we can use it to bound the absolute value of $c_1(A)$ from below:
	\begin{equation*}
	-c_1(A) \geq \frac{\indCZ_R(\gamma,u_0)+n-1}{2}= \frac{\indCZ_R( \gamma, u_0)-n+1}{2} + n-1    
	\geq \frac{\int_\gamma\lambda}{C(U)}+n-1,
	\end{equation*}
	where, in the last inequality, we used our assumption that $\partial U$ is dynamically convex, and hence $\indCZ_R(\gamma, u_0)-n+1> 0$.
	In light of this observation, the action can be bounded as follows:
	\begin{eqnarray}
	-\int_{ \gamma}\lambda -\omega(A) &=& 
	-\int_{ \gamma}\lambda -c_1(A)\cdot\kappa \geq  -\int_{ \gamma}\lambda +{\kappa}\cdot\left(\frac{\int_{ \gamma}\lambda}{C( U)}+n-1\right)\nonumber\\
	&=& -\int_{ \gamma}\lambda +\frac{\kappa}{C( U)}\cdot\int_{ \gamma}\lambda +(n-1)\kappa	
	\geq (n-1)\kappa, \nonumber   
	\end{eqnarray}
	where the last inequality follows from our assumption that $C( U)\leq \kappa$. Since the dimension of $M$ is $2n$ and is assumed to greater than $2$,  we conclude that the action $-\int_{ \gamma}\lambda -\omega(A)$ is bounded from below by $\kappa$. 
	As a result, the relative spectrum, $\spec_n(\partial U;M)$, does not intersect the interval $(0,\kappa)$, which, by our assumption, contains $(0,c(H)]$. Having established condition (\ref{eq:the_condition}),  Proposition~\ref{pro:killer1} together with Claim~\ref{clm:max_ineq} guarantee that the max inequality holds in this setting.  
\end{proof}

We end this section with a proof of Claim~\ref{clm:sphere}, which states that the max inequality holds for Hamiltonians supported in certain disjoint disks on the sphere.
\begin{proof}[Proof of Claim~\ref{clm:sphere}]
We assume that the area form $\omega$ is normalized so that the total area of the sphere is 1. In this case, the monotonicity constant is $\kappa=\frac{1}{2}$. 
Let $U\subset S^2$ be a disk of area $a$ and let $H$ be a Hamiltonian supported in $U$. In what follows we show that if $a\notin(1/3,1/2)$, one can construct a spectral killer for $H$. Claim~\ref{clm:max_ineq} will then guarantee that the max inequality (\ref{eq:max_ineq}) holds for such Hamiltonians.
Consider the family of radial Hamiltonians $K_\tau:=\tau\cdot K_1$ that was constructed in the proof of Proposition~\ref{pro:killer1}. Recall that $K_1$ is supported in $U$, constant and equal to $-1$ for $s<1-2\epsilon$, and is approximately linearly increasing for $s\in(1-2\epsilon, 1-\epsilon)$. Here and in what follows $s$ is the Liouville coordinate on the disk $U$. In order to conclude that $K_{c(H)}$ is a spectral killer for $H$, we need to show that the actions of index $-n$ capped periodic orbits of $K_\tau$ for $s\geq 1-3\epsilon/2$ do not intersect the interval $(0,c(H)]$. After perturbing $K_\tau$ into a non-degenerate Hamiltonian, its 1-periodic orbits for $s\geq 1-3\epsilon/2$ are $\{\gamma^k_1,\gamma^k_2\}_k$ and $p$, where $\gamma^k_i$ rotates $k$ times around $\partial U$ and $p\in U^c$ is a maximum point of action approximately zero. When paired with capping disks $u_{i,0}^k\subset U$, the actions of $\gamma^k_i$ are approximately $-k\cdot area(U) = -k\cdot a$, and their CZ indices are
\begin{equation*}
\indCZ_H(\gamma^k_1,u_{1,0}^k) =2k,\quad \indCZ_H(\gamma^k_2,u_{2,0}^k) =2k-1.
\end{equation*}
See, for example, \cite[p.11]{seyfaddini2014spectral}\footnote{Note that the sign choice for the CZ index in \cite{seyfaddini2014spectral} is opposite to ours.}. Therefore, there exists a capping disk $u_i^k$ such that $\indCZ_H(\gamma_i^k,u_i^k)=-n=-1$ if and only if $i=2$ and $u_2^k=u_{2,0}^k\#A^k$, for $A^k\in\pi_2(S^2)$ such that $c_1(A^k)=-k$. In this case, the action of the index $-1$ capped orbit is approximately
$$
-k\cdot a-\omega(A^k)=-k\cdot a-\kappa\cdot c_1(A^k)= -k\cdot a-{1}/{2}\cdot c_1(A^k)=k \cdot (1/2-a).
$$
Overall, the actions of non-degenerate orbits of index $-1$  are $\{0, k\cdot (1/2-a)\}$, and we can construct a spectral killer for $H$ if these actions do not intersect the interval $(0,c(H)]$. Let us show that this holds whenever $a\notin(1/3,1/2)$. Clearly, if the area $a$ of $U$ is greater than or equal to $1/2$, then $k(1/2-a)$ is non-positive and does not lie in the interval $(0,c(H)]$. Any open disk in $S^2$ of area less than $1/2$ is known to be displaceable, and its displacement energy coincides with the area. The Hamiltonian $H$ is compactly supported in $U$, and therefore is also supported in a slightly smaller disk. Applying the energy-capacity inequality to the smaller disk we conclude that $c(H)<a$. Therefore, $(0,c(H)]\subset(0,1/3)$ when $a\leq 1/3$. On the other hand, recalling that the minimal Chern number on $S^2$ is 2 and that $k=-c_1(A^k)$, we conclude that $k(1/2-a)\geq 2(1/2-a)\geq 2\cdot (1/2-1/3)=1/3$ and in particular $k(1/2-a)$ does not lie in $(0,c(H)]$.
\end{proof}

\section{Uniform bounds on spectral invariants.}\label{sec:unif_bounds}
In this section we prove Theorems~\ref{thm:spec_bound_pos} and \ref{thm:spec_bound_neg}, which state uniform bounds for the spectral invariants of Hamiltonians supported in portable Liouville domains with dynamically convex incompressible boundaries, in monotone manifolds. When the manifold is positively monotone, one has to add a condition regarding the ``size" of the domain containing the support, as seen in the following example.
\begin{exam}\label{exa:equator_sh}
	Let $(M,\omega)$ be the two-dimensional sphere, endowed with an area form. It is known that the equator $E\subset S^2$ is superheavy, see, e.g., \cite{polterovich2014function}. In \cite{entov2004quasi}, Entov and Polterovich proved that 
	the spectral invariant of any Hamiltonian is not smaller than the minimal value the Hamiltonian attains on a superheavy set. Therefore, when $U$ is a large disk containing the equator, there is no uniform upper bound for the spectral invariant of Hamiltonians supported in $U$.   
\end{exam}

In order to bound the spectral invariant of a general Hamiltonian supported in a Liouville domain $U$, it is enough to consider simple, arbitrarily large radial Hamiltonians and use the monotonicity property of spectral invariants. 
Note that, on Liouville domains, we may consider radial Hamiltonian (in the sense of Definition~\ref{def:radial_hamiltonian}) that are not necessarily locally constant outside of a neighborhood of the boundary, but are locally constant on a small neighborhood of the core.

\subsection{Positively monotone manifolds.} \label{subsec:bound_pos_mon}
Let $(M,\omega)$ be a positively monotone symplectic manifold, namely, $\omega=\kappa\cdot c_1$ on ${\pi_2(M)}$ for $\kappa> 0$. Let us prove Theorem~\ref{thm:spec_bound_pos}, which states that for every Hamiltonian supported in a disjoint union $U$ of portable Liouville domains with incompressible dynamically convex boundaries such that $C(U)\leq \kappa$, the spectral invariant is smaller than $C(U)$. In what follows we concern the Liouville coordinate of $U$, by which we mean the coordinate on each connected component.

\begin{proof}[Proof of Theorem~\ref{thm:spec_bound_pos}]
	As mentioned above, in order to prove upper bounds for spectral invariants of Hamiltonians supported in $U$, it is enough to consider (non-degenerate perturbations of) non-increasing radial Hamiltonians. The claim for general Hamiltonians will then follow from the monotonicity property of spectral invariants. Let $H:=\chi(s)$ where $s$ is the Liouville coordinate on $U$ and $\chi:\R\rightarrow\R$ is a smooth approximation of a piecewise linear function that is constant for $s\leq \varepsilon$, vanishes for $s\geq 1-\delta$ (here $\delta>0$ is arbitrarily small) and is linearly decreasing in between. We choose $\chi$ such that, outside of a neighborhood of the ``corners" $s=\varepsilon$ and $s=1-\delta$,  its derivative is constant and does not lie in the Reeb spectrum of $\partial U$. We also assume that $\chi''$ is strictly positive outside of the intervals in which $\chi'$ is constant (and, in particular, near the ``corners"). Finally, we choose $\varepsilon$ to arbitrarily smaller than $\frac{\max\chi}{1-\delta}$, so that $\varepsilon\cdot \chi'$ is arbitrarily small. 
	Let us prove that $c(H)<C(U)$ in two steps. First we  use a continuous deformation of $H$ to show that $c(H)$ is bounded by the maximal action that can be possibly attained by index $-n$ capped orbits near $s=1-\delta$ for Hamiltonians of this shape, and then we show that these actions are all smaller than $C(U)$.\\

	\noindent\underline{Step 1:} 
	In this step we consider a continuous deformation of the Hamiltonian $H$, and describe the bifurcation diagram of its spectrum.
	\begin{figure}
		\centering
		\includegraphics[scale=0.7]{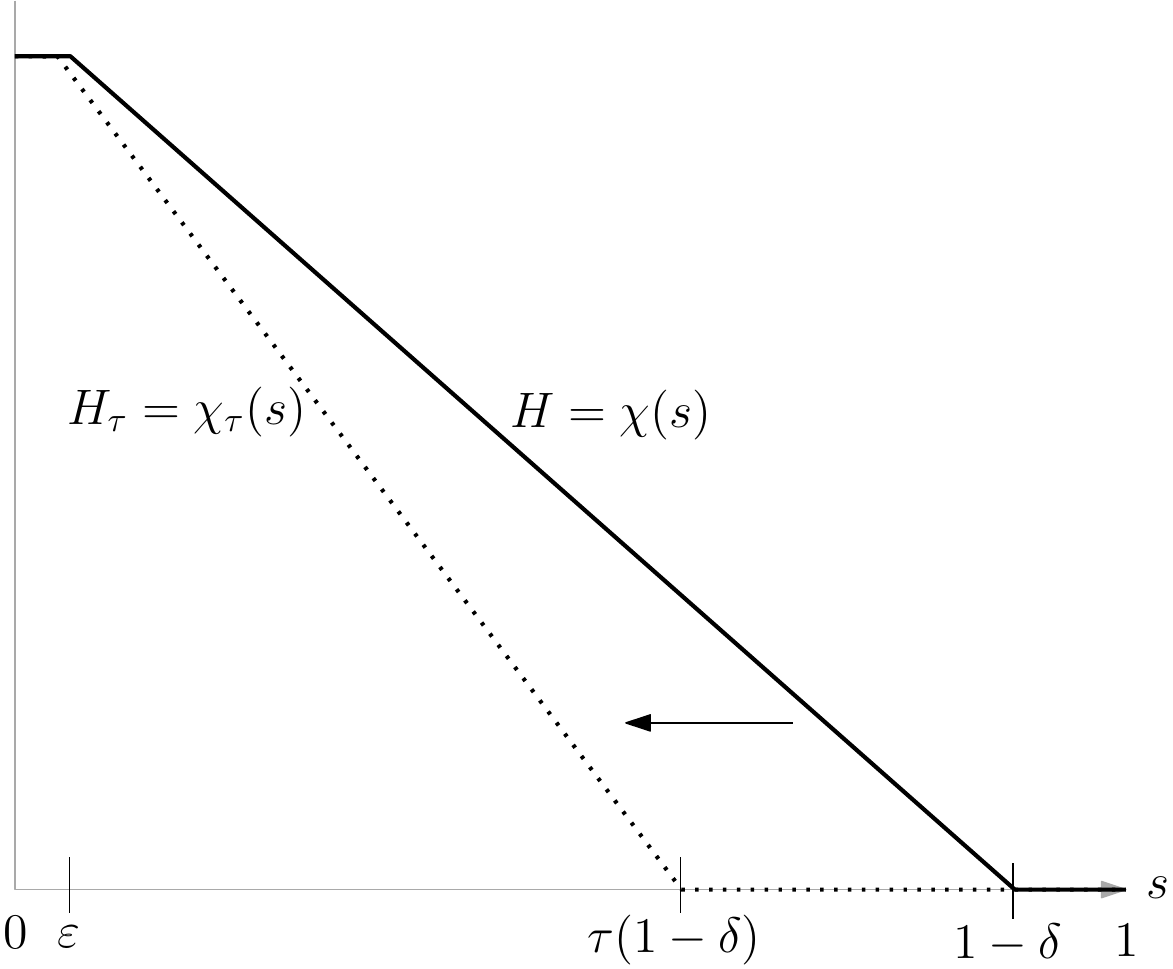}
		\caption{\small{The deformation of the radial Hamiltonian $H$ is given by shrinking its support, using the Liouville flow. Generically, non-constant 1-periodic orbits appear near the ``corners", namely near $s=\tau\varepsilon$ and $s=\tau(1-\delta)$.}}
		\label{fig:deformation_pos_bound_c}
	\end{figure}
	The deformation is given by composing $H$ on the inverse  Liouville flow:
	\begin{equation}\label{eq:deformation_bound_pos}
	H_\tau:= \begin{cases}
	H\circ \psi^{-\log \tau} & \text{ on } \psi^{\log \tau}U,\\
	0 & \text{ on } (\psi^{\log \tau}U)^c
	\end{cases}
	\end{equation}
	for $\tau\in(0,1]$, see Figure~\ref{fig:deformation_pos_bound_c} for an illustration. Notice that $H_\tau =\chi_\tau(s):= \chi(s/\tau)$ is a radial Hamiltonian for each $\tau$, 	
	its non-constant 1-periodic orbits are in correspondence with the Reeb orbits on $\partial U$ and we can use Lemmas \ref{lem:radial_Hamiltonians1}, \ref{lem:radial_Hamiltonians2} to relate their actions and indices. 
	Abbreviating $s=s(\gamma)$, Lemma~\ref{lem:radial_Hamiltonians1} states that  $\hat{\gamma}(t):=\psi^{-\log s}\gamma(t/\chi'(s))\subset\partial U$ is a Reeb orbit of action $-\chi_\tau'(s)\in \spec(\partial U)$.
	Recalling that the Reeb spectrum is a discrete set, we conclude that for generic $\tau\in(0,1]$, non-constant 1-periodic orbits of $H_\tau$ appear only near the ``corners" $s=\tau\varepsilon$ and $s=\tau(1-\delta)$.
	As stated in Corollary~\ref{cor:action_radial_Hamiltonian}, the action of a capped 1-periodic orbit $(\gamma,u)$ of $H_\tau$ is given by 
	\begin{equation*}
	\cA_{H_\tau}(\gamma,u)=\chi_\tau(s)-s\cdot\chi_\tau'(s)-\omega(A),
	\end{equation*}
	where $A\in \pi_2(M)$ is such that $u=u_0\#A$ and $u_0\subset\{s=s(\gamma)\}$.
	Let us describe the bifurcation diagram of the spectrum of $H_\tau$ as $\tau$ varies. When $\tau$ decreases, more 1-periodic orbits may appear near the corners $s=\tau\varepsilon$ and $s=\tau(1-\delta)$. The action of a capped orbit with $s=s(\gamma)$ near $\tau\varepsilon$, once it appeared, is approximately 
	$$ 
	\chi_\tau(\tau\varepsilon) -\varepsilon\tau\chi_\tau'(s)  -\omega(A) =	\chi(\varepsilon) -\varepsilon\tau\frac{1}{\tau}\chi'(s/\tau)  -\omega(A)  \approx \chi(0)-\omega(A),
	$$
	where the last approximation is due to our assumption that $\varepsilon$ is arbitrary small compared to the derivative of $\chi$. In particular, the action of orbits near  $s=\tau\varepsilon$ remains approximately constant as $\tau$ varies.  This is also the case for capped constant 1-periodic orbits (namely, pairs of a critical point of $H_\tau$ and a sphere $A\in\pi_2(M)$) in the region $s\geq \tau(1-\delta)$, in which case the action is approximately $-\omega(A)$. On the other hand, the action of a non-constant 1-periodic orbit near $s=\tau(1-\delta)$, once it appeared, is approximately $0-\tau(1-\delta)\cdot\chi_\tau'(s)-\omega(A)$ and, in particular, changes linearly in $\tau$ with slope $-(1-\delta)\chi_\tau'(s)\in(1-\delta)\cdot\spec(\partial U)$. We conclude that the bifurcation diagram is (approximately) composed of horizontal lines, corresponding to orbits near $s=\tau\varepsilon$, as well as constant orbits, and of lines with positive slopes, corresponding to orbits near $s=\tau(1-\delta)$, see Figure~\ref{fig:bifurcation_diag_pos_bound_c}.
	\begin{figure}
		\centering
		\includegraphics[scale=0.7]{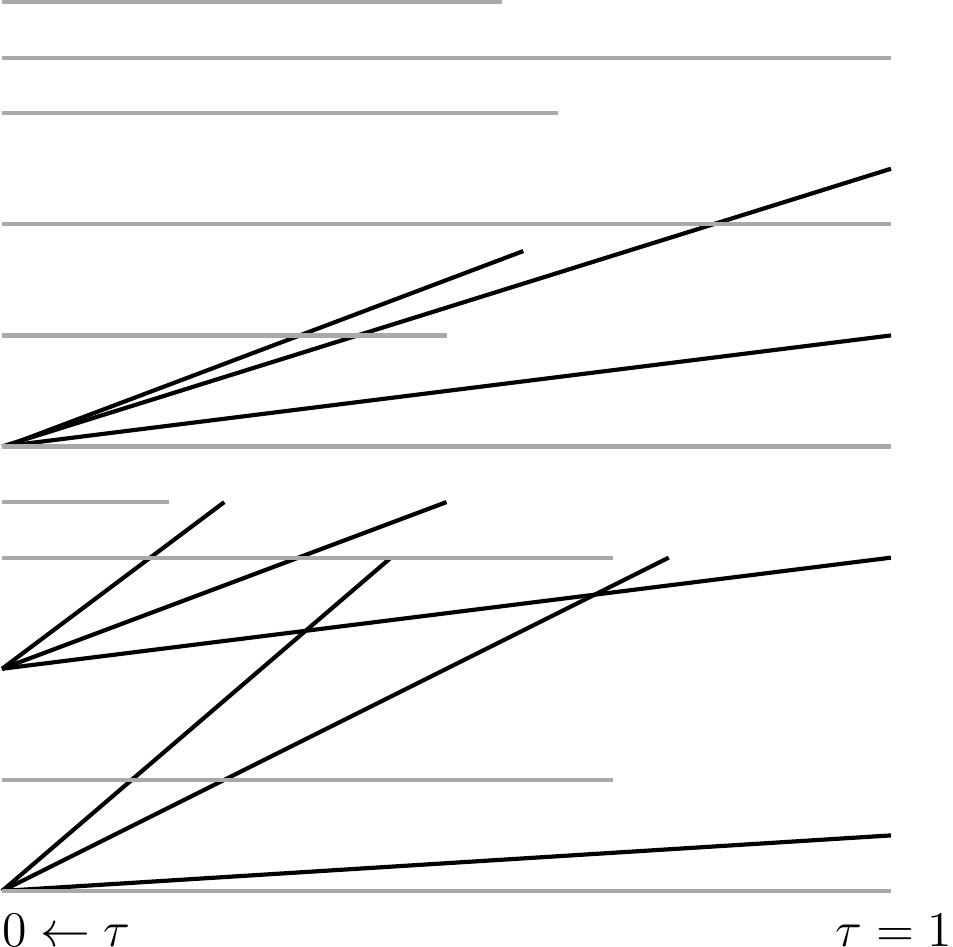}
		\caption{\small{The bifurcation diagram corresponding to the $n$-spectrum of $H_\tau$. Since the spectral invariant starts close to zero and moves continuously on the diagram, its value is bounded by the maximal height attained by a non-horizontal line.}}
		\label{fig:bifurcation_diag_pos_bound_c}
	\end{figure}
	When $\tau$ is very small, $H_\tau$ is supported in $\psi^{-\log\tau} U$ which is displaceable with small displacement energy, as explained in Section~\ref{subsec:portable_preliminaries} above. By the energy-capacity inequality, $c(H_\tau)$ is bounded by the displacement energy of the support and hence is very small. Following the bifurcation diagram of the spectrum of $H_\tau$, it is clear that $c(H)$ cannot be larger than the maximal point on a non-horizontal line. After perturbing $\{H_\tau\}$ into non-degenerate Hamiltonians, their spectral invariants lie in the $n$-spectrum. Repeating the arguments above for the $n$-spectrum we see that $c(H)$ is not greater than the maximal point on a non-horizontal line in the diagram corresponding to the $n$-spectrum. \\
	
	\noindent\underline{Step 2:}
	In this step we show that, for every $\tau\in(0,1]$, the action of every index $-n$ capped orbit of a non-degenerate perturbation of $H_\tau$ that appears near $s=\tau(1-\delta)$ is not greater than $C(U)-\delta\cdot \min\spec(\partial U)$. Together with the previous step, this will imply that $c(H)<C(U)$ as required.  
	Let $(\gamma,u)$ be a 1-periodic orbit of $H_\tau$ such that one of the non-degenerate capped orbits, $(\gamma',u')$, appearing after perturbing $H_\tau$ into a non-degenerate Hamiltonian, is of CZ index $-n$. In what follows we compute the Chern class of $A\in\pi_2(M)$ for which $u=u_0\#A$, where $u_0$ is a capping disk of $\gamma$ that is contained in the level set $\{s=s(\gamma)\}$. Since the manifold $M$ is monotone, we will use this computation when estimating the action of $(\gamma,u)$. 
	As mentioned above, $\hat{\gamma}(t):=\psi^{-\log s}\gamma(t/\chi'(s))\subset\partial U$ is a Reeb orbit of action $-\chi_\tau'(s)\in \spec(\partial U)$. Let $\hat u_0\subset \partial U$ be a capping disk of $\hat \gamma$ whose image coincides with $\psi^{-\log s}u_0$. Due to our assumption that $\chi''(s)>0$ for every level $s$ that contains 1-periodic orbits, 
	Lemma~\ref{lem:radial_Hamiltonians2} guarantees that 
	\begin{equation}\label{eq:unif_bound-RS_vs_CZR}
	\indRS(\gamma,u_0)= -\indCZ_R(\hat\gamma,\hat u_0) +\frac{1}{2}.
	\end{equation}
	Having a non-degenerate orbit  $(\gamma',u')$ of index $-n$ appearing after the perturbation, we can estimate $\indRS(\gamma,u_0)$ using inequality (\ref{eq:RS_after_pert}), which states that the index difference between the perturbed and original orbits is bounded by $\frac{1}{2}\dim\ker (d\varphi_H^1(\gamma(0))-\id)$. The second assertion of Lemma~\ref{lem:radial_Hamiltonians2} states that the kernel of $d\varphi_H^1(\gamma(0))-\id$ is 1-dimensional if $\chi''(s)\neq0$. Therefore, $|\indCZ_H(\gamma',u_0')-\indRS(\gamma,u_0)|\leq \frac{1}{2}$, where $u_0'$ is a capping disk of $\gamma'$ that is contained in a small neighborhood of $u_0$.  We conclude that 
	$$
	-n=\indCZ_H(\gamma',u') = \indCZ_H(\gamma',u_0')+2c_1(A)\leq \indRS(\gamma,u_0)+\frac{1}{2}+2c_1(A),
	$$
	and, together with (\ref{eq:unif_bound-RS_vs_CZR}), this yields that
	$$
	c_1(A)\geq \frac{\indCZ_R(\hat\gamma,\hat u_0)-n-1}{2}.
	$$
	Having a lower bound for the Chern class of $A$, it remains to use the monotonicity of $M$ to bound the action of the index $-n$ capped orbit $(\gamma', u')$. Since the action $(\gamma', u')$ is close to $\cA_{H_\tau}(\gamma,u)$, we will estimate the latter. Recalling the formula for the action of orbits of radial Hamiltonians, which was established in Corollary~\ref{cor:action_radial_Hamiltonian}, we have
	\begin{eqnarray}
	\cA_{H_\tau}(\gamma,u)&=& \chi_\tau(s)-s\cdot\chi_\tau'(s)-\omega(A) 
	\approx 0-\tau(1-\delta)\cdot \chi_\tau'(s) -\omega(A) \nonumber\\
	&=& \tau(1-\delta)\cdot\int_{\hat \gamma}\lambda -\omega(A) \leq (1-\delta)\cdot\int_{\hat \gamma}\lambda -\omega(A) \nonumber\\
	&=& (1-\delta)\cdot\int_{\hat \gamma}\lambda -\kappa\cdot c_1(A)\nonumber\\
	&{\leq}& (1-\delta)\cdot\int_{\hat \gamma }\lambda -\kappa\cdot \left(\frac{\indCZ_R(\hat\gamma,\hat u_0)-n-1}{2}\right).\nonumber
	\end{eqnarray}
	Our assumption that $\partial U$ is dynamically convex implies that  $\indCZ_R(\hat\gamma ,\hat u_0)\geq n+1$. Since $C(U)$ is assumed to be not-greater than $\kappa$, this yields   
	\begin{equation*}
	\cA_{H_\tau}(\gamma,u)
	\leq (1-\delta)\int_{\hat \gamma }\lambda -C( U)\cdot \left(\frac{\indCZ_R(\hat\gamma,\hat u_0)-n-1}{2}\right).
	\end{equation*}
	The next step is to bound the contact CZ index by the Reeb action divided by $C(U)$. Recall that $C(U)$ was defined as the supremum over ratios ${2\int_{\hat \gamma}\lambda}/\left({\indCZ_R(\hat \gamma,\hat u_0)-n+1}\right)$. Therefore, $\indCZ_R(\hat\gamma,\hat u_0)-n-1\geq {2\int_{\hat \gamma}\lambda}/{C(U)}-2$, and we can bound the action by 
	\begin{eqnarray}
	\cA_{H_\tau}(\gamma,u)
	&\leq& (1-\delta)\int_{\hat \gamma }\lambda -C( U)\cdot \left(\frac{\indCZ_R(\hat\gamma,\hat u_0)-n-1}{2}\right)\nonumber\\  
	&\leq& (1-\delta)\int_{\hat \gamma }\lambda - C( U)\cdot\frac{1}{2}\cdot \left(\frac{2\int_{\hat \gamma}\lambda}{C( U)}-2\right) \nonumber\\
	&=& (1-\delta)\int_{\hat \gamma }\lambda - \int_{\hat \gamma}\lambda + C( U) \leq C( U)-\delta \min\spec(\partial U). \nonumber
	\end{eqnarray}
	This finishes the proof, as the first step guaranteed that the spectral invariant $c(H)$ is not-greater than the maximal action of such a capped orbit. In particular, we conclude that $c(H)<C(U)$ as required.
\end{proof}

\subsection{Negatively monotone manifolds.}
In \cite[Proposition 5.4]{polterovich2014symplectic} Polterovich proved that on an aspherical manifold $M$, the spectral invariant of every Hamiltonian  supported in a disjoint union $U$ of portable Liouville domains is bounded by the portability number $p(U)$. Theorem~\ref{thm:spec_bound_neg} asserts that this holds true for negatively monotone manifolds, if one demands in addition that $\partial U$ is dynamically convex. As before, the Liouville coordinate of the disjoint union is given be the Liouville coordinate on each connected component.

\begin{proof}[Proof of Theorem~\ref{thm:spec_bound_neg}]
	Following \cite{polterovich2014symplectic}, the idea is to use {\it symplectic contraction} and follow the spectral invariant in the corresponding bifurcation diagram. The symplectic contraction of a Hamiltonian $H$ supported in the Liouville domain $U$ is defined to be 
	\begin{equation*}
	H_\tau:=\begin{cases}
	\tau\cdot H\circ \psi^{-\log \tau} & \text{ on } \psi^{\log \tau}U,\\
	0 & \text{ on } M\setminus\psi^{\log \tau}U
	\end{cases}
	\end{equation*}
	for $\tau\in[0,1]$, see Figure~\ref{fig:unif_bnd_neg_deformation} for an illustration.
	\begin{figure}
		\centering
		\includegraphics[scale=0.7]{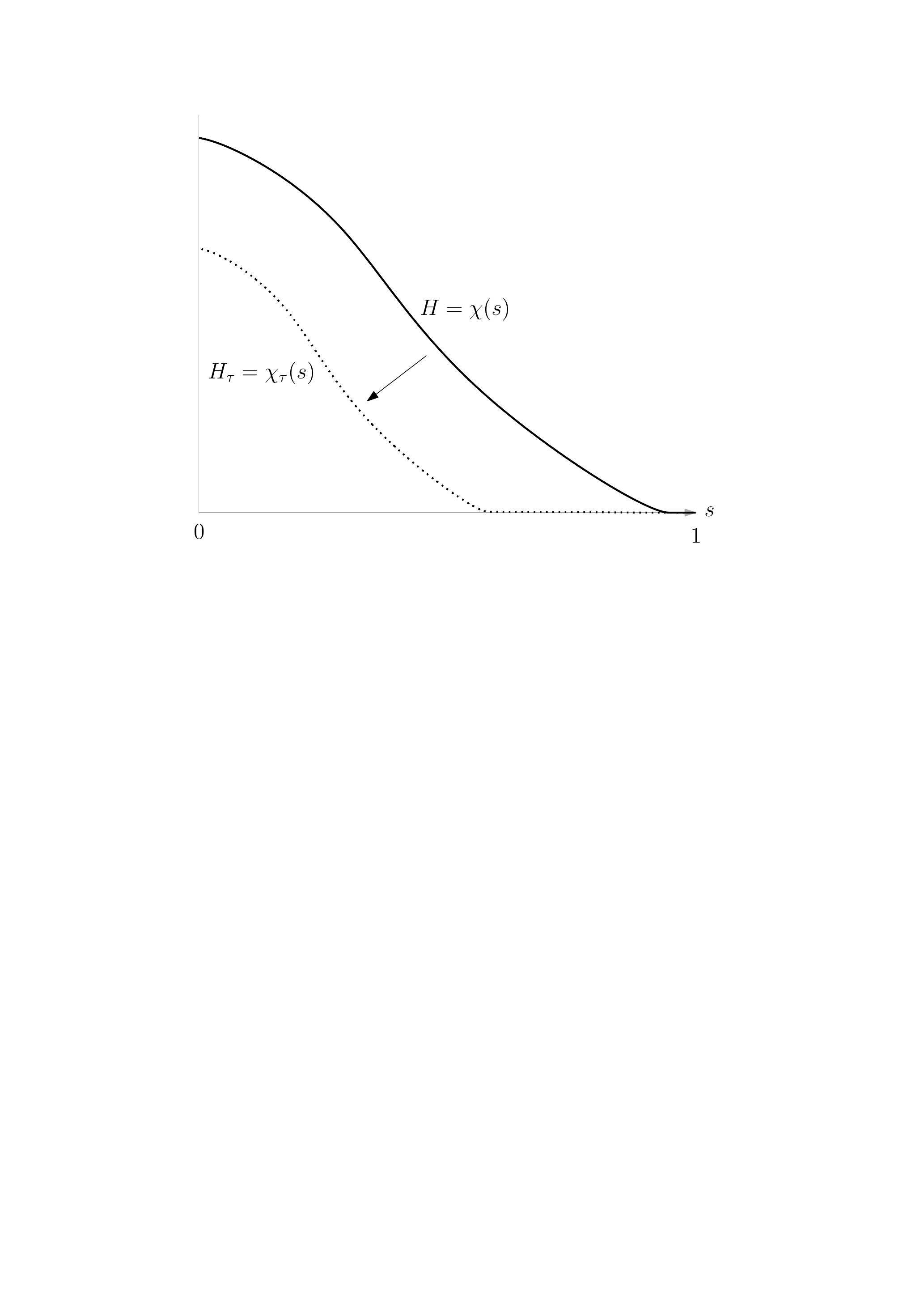}
		\caption{\small{Symplectic contraction of a radial non-increasing Hamiltonian $H$.}}
		\label{fig:unif_bnd_neg_deformation}
	\end{figure}
	The flow of the contracted Hamiltonian $H_\tau$ is the composition of the Liouville flow on flow of $H$, namely, $\varphi_{H_\tau}^t=\psi^{\log \tau}\varphi_H^t$. 
	In the aspherical case there are no intersections in the bifurcation diagram corresponding to this deformation, and the spectral invariant changes linearly, $c(H_\tau) = \tau\cdot c(H)$. However, on monotone manifolds, there could be a lot of intersections and the change of $c(H_\tau)$ is in general more complicated. To simplify the situation, we consider only non-increasing radial Hamiltonians. The claim for general Hamiltonians will follow from the monotonicity property of spectral invariants. Let $H:=\chi(s)$ where $s$ is the Liouville coordinate on $U$ and $\chi:\R\rightarrow\R$ is a non-increasing function that vanishes on $s\geq 1$ and is constant near $s=0$. We assume that the second derivative of $\chi$ does not vanish on level sets containing 1-periodic orbits, i.e., whenever $|\chi'(s)|\in\spec(\partial U)$. Let us compute the change in the action spectrum when symplectically contracting $H$. The non-constant 1-periodic orbits of $H$ and $H_\tau$ are in bijection: for $\gamma\in\cP(H)$, its image $\psi^{\log \tau}\gamma$ under the Liouville flow is a 1-periodic orbit of $H_\tau$. Given a capping disk $u_0\subset \{s=s(\gamma)\}$ of $\gamma$, its image $\psi^{\log \tau}u_0$ is a capping disk of $\psi^{\log \tau}\gamma$. The action of $(\psi^{\log \tau}\gamma,\psi^{\log \tau}u_0)$ with respect to $H_\tau$ is
	\begin{eqnarray*}
	\cA_{H_\tau}(\psi^{\log \tau}\gamma,\psi^{\log \tau}u_0) &=& H_\tau(\psi^{\log \tau}\gamma) - \int\left(\psi^{\log \tau}u_0\right)^*\omega\\ 
	&=&\tau\cdot H(\gamma) - \tau\cdot\int u_0^*\omega 
	=\tau\cdot\cA_H(\gamma,u_0).
	\end{eqnarray*}
	In addition, the RS indices of these capped orbits are equal. Indeed, given a trivialization $T:u_0^*TM\rightarrow D^2\times\R^{2n}$, we can define a trivialization along $\psi^{\log\tau} u_0$ by $T\circ d\psi^{-\log\tau}$. Under these trivializations, the differentials of the flows coincide and  thus they have the same RS index.   
	More generally, if $u$ is any capping of $\gamma$, we may write $u=u_0\#A$ for $A\in\pi_2(M)$ and then $(\psi^{\log\tau}u_0)\#A$ is a capping of $\psi^{\log \tau}\gamma$. In that case,
	\begin{eqnarray*}
	\cA_{H_\tau}(\psi^{\log \tau}\gamma,(\psi^{\log \tau}u_0)\#A) &=& \cA_{H_\tau}(\psi^{\log \tau}\gamma,\psi^{\log \tau}u_0) -\omega(A)\\
	& = &\tau\cdot\cA_H(\gamma,u_0)-\omega(A).
	\end{eqnarray*}
	If follows that the corresponding bifurcation diagram consists of lines whose slopes equal to $\cA_H(\gamma,u_0)$ for $\gamma\in\cP(H)$ and  $u_0\subset\{s=s(\gamma)\}$ and whose starting points (that is, the values at $\tau=0$) are $-\omega(A)$.	
	After perturbing $H$ into a non-degenerate Hamiltonian, it is possible to choose non-degenerate perturbations of $H_\tau$ such that 1-periodic orbits will still be in bijection with the same relations for the actions and indices. After this perturbation the spectral invariant will lie in the spectrum of index $-n$ capped orbits.
	Our next goal is to show that the slopes and the starting points are all non-negative, as illustrated in  Figure~\ref{fig:bifurcation_diag_neg_bound_c}. 
	\begin{figure}
		\centering
		\includegraphics[scale=0.5]{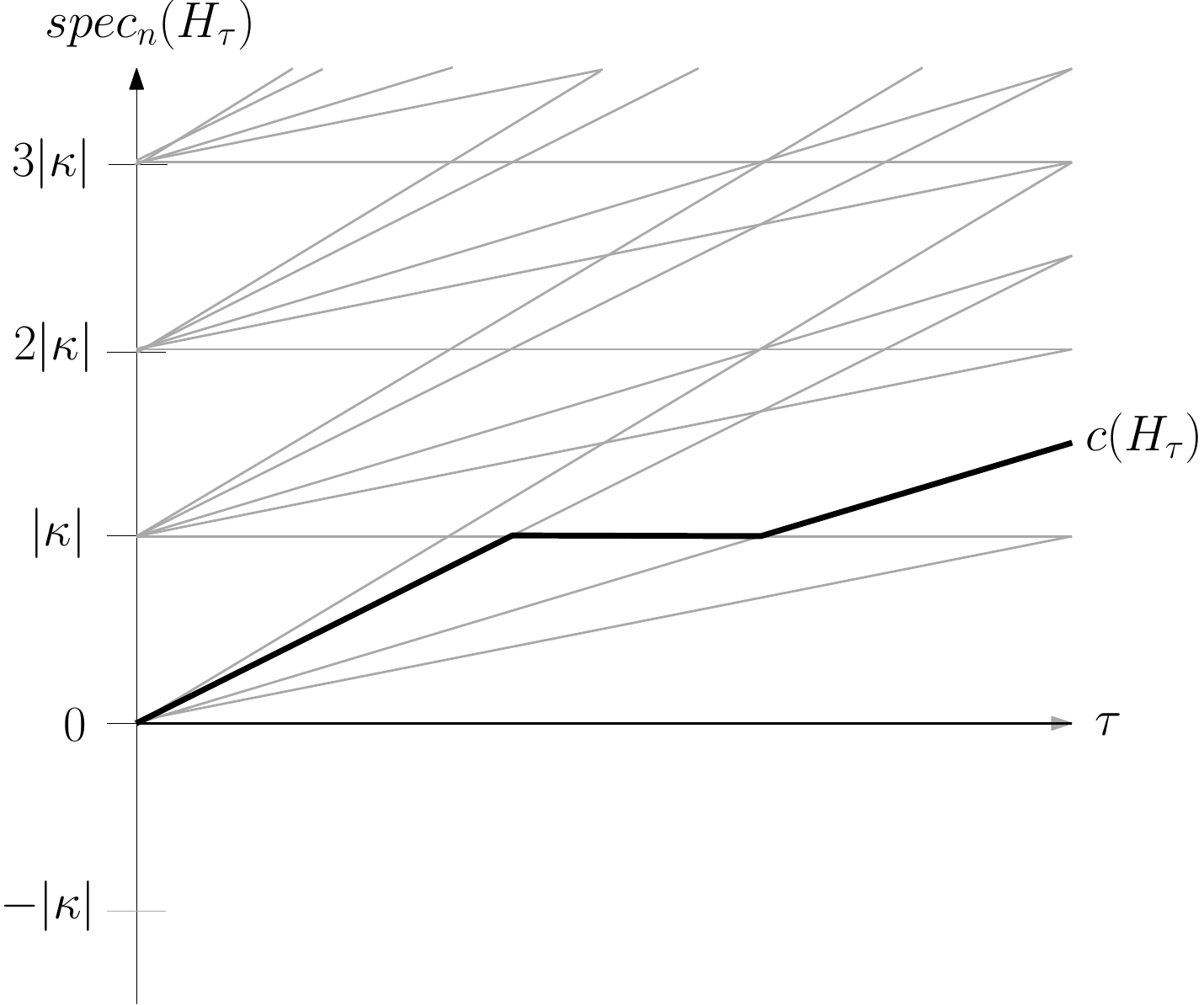}
		\caption{\small{An illustration of the bifurcation diagram for symplectic contraction of a non-increasing radial Hamiltonian on a dynamically convex domain. When all slopes and starting points are non-negative, the spectral invariant $c(H_\tau)$ can only move along lines of slope not greater than the initial slope.}}
		\label{fig:bifurcation_diag_neg_bound_c}
	\end{figure}
	This will allow us to estimate the path of the spectral invariant in the bifurcation diagram. We start with the slopes, which corresponds to actions of orbits $\gamma\in \cP(H)$ with capping disks $u_0$ that are contained in the Liouville level set:
	\begin{itemize}
		\item \underline{$\cA_H(\gamma,u_0)\geq 0$:} 
		Since we assumed $\chi$ to be non-increasing, the non-negativity of the slopes in the bifurcation diagram follows immediately from the formula for actions of orbits of radial Hamiltonians, that was stated in Corollary~\ref{cor:action_radial_Hamiltonian}, when applied for $u=u_0$, $A=0$:
		\begin{equation*}
			\cA_H(\gamma,u_0)=\chi(s)-s\cdot\chi'(s)\geq 0.
		\end{equation*}
		
		\item \underline{$-\omega(A)\geq 0$:} The non-negativity of the stating points requires index computations. 
		We start by recalling that the index difference between the perturbed and original orbits is bounded by half the dimension of $\ker(d\varphi_H^1(\gamma(0))-\id)$, as stated in  (\ref{eq:RS_after_pert}).
		Due to our choice of $\chi$, its second derivative $\chi''(s)$ does not vanish whenever $-\chi'(s)\in\spec(\partial U)$ and Lemma~\ref{lem:radial_Hamiltonians2} guarantees that the kernel of $d\varphi_H^1(\gamma(0))-\id$ is 1-dimensional. Thus
		\begin{equation*}
		|\indCZ_H(\gamma',u_0') - \indRS(\gamma,u_0)|\leq \frac{1}{2},
		\end{equation*}
		where $u_0'$ is a capping disk of $\gamma'$, obtained as a perturbation  of $u_0\subset \{s=s(\gamma)\}$. We therefore conclude that capped orbits $(\gamma',u')$ of index $-n$ can appear only for $(\gamma,u)$ such that 
		\begin{eqnarray}\label{eq:neg_bnd_RS}
		\frac{1}{2}&\geq& |\indRS(\gamma,u_0)-\indCZ_H(\gamma',u_0')| = |\indRS(\gamma,u_0)-\indCZ_H(\gamma',u')+2c_1(A)|\nonumber\\
		&=& |\indRS(\gamma,u_0)+n+2c_1(A)|,
		\end{eqnarray}
		where $A\in\pi_2(M)$ is such that $u'=u_0'\#A$.
		Next, we use Lemma~\ref{lem:radial_Hamiltonians2} in order to replace the RS index of $(\gamma,u_0)$ by the contact CZ index of the corresponding capped Reeb orbit $(\hat \gamma,\hat u_0)$. Here, $\hat\gamma(t):=\psi^{-\log s}\gamma(t/\chi'(s))\in \cP(\partial U)$ and $\hat u_0$ is a capping disk whose image coincides with that of $\psi^{-\log s} u_0$.  
		Recalling that $\chi'(s)\leq 0$, Lemma~\ref{lem:radial_Hamiltonians2} states that the RS index of the capped 1-periodic orbit $(\gamma,u_0)$ of the radial Hamiltonian $H$ is given by 
		\begin{equation*}
		\indRS(\gamma,u_0) = -\indCZ_R(\hat \gamma,\hat u_0) +\frac{1}{2}\sign(\chi''(s))\leq  -\indCZ_R(\hat \gamma,\hat u_0) +\frac{1}{2}. 
		\end{equation*}
		Therefore, capped 1-periodic orbits of CZ index $-n$ can appear only for $(\gamma,u)$ with $u=u_0\#A$ such that
		\begin{equation*}
		c_1(A) \geq \frac{-\indRS(\gamma,u_0)-n-\frac{1}{2}}{2} \geq \frac{\indCZ_R(\hat \gamma,\hat u_0) -n-1}{2},
		\end{equation*}
		for some Reeb orbit $\hat \gamma\in\cP(\partial U)$ with a capping	 $\hat u_0\subset \partial U$.
		Since $\partial U$ is dynamically convex, $\indCZ_R(\hat \gamma,\hat u_0)\geq n+1$ for every capped Reeb orbit. As a consequence, $c_1(A)\geq 0$ and, recalling that $M$ is negatively monotone, $-\omega(A)=-\kappa c_1(A)\geq 0$ as required.
%
\end{itemize}
	Having proved that the spectral invariant lies in a diagram consisting of  lines with non-negative slopes and non-negative starting points, let us explain how the upper bound $c(H)\leq p(U)$ follows.
	We follow $c(H_\tau)$ along the diagram as $\tau$ grows from $0$ to $1$. When $\tau=0$, $H_0=0$ and by the stability property of spectral invariants, $c(H_0)=0$. When $\tau>0$ is very small, there are no intersections in the diagram yet. As a result, $c(H_\tau)$ moves along a single line, $\ell$, that starts at zero and whose slope we denote by $a\geq 0$: $c(H_\tau)=a\cdot \tau$ for $\tau<\epsilon$. We claim that $c(H_\tau)\leq a\cdot \tau$ for all $\tau\leq 1$. Indeed, since the line $\ell$ starts at the lowest possible point (zero), every line $\ell'$ of slope bigger than $a$ is completely contained in the upper region bounded by $\ell$, namely $\ell'\subset \{y>a\cdot\tau\}$. Thus, when starting on the line $\ell$, the spectral invariant $c(H_\tau)$ can only move along lines of slope $\leq a$, see Figure~\ref{fig:bifurcation_diag_neg_bound_c} for an illustration. Therefore, $c(H_\tau)\leq a\cdot\tau$ for all $\tau\leq 1$. To conclude a bound for the spectral invariant of $H=H_1$, it remains to bound the slope $a$ of $\ell$. This argument is identical to that in the aspherical case, presented by Polterovich in \cite{polterovich2014symplectic}: The contracted Hamiltonian $H_\tau$ is supported in $\psi^{\log \tau} U$. When $\tau$ is very small, this set is displaceable in $U$ with energy $e(\psi^{\log \tau} U;U)$. By the energy-capacity inequality, $c(H_\tau)\leq e(\psi^{\log \tau} U;M)\leq e(\psi^{\log \tau} U;U)$ for $\tau$ close enough to zero. Since $c(H_\tau)$ lies on $\ell$ there, we conclude that the slope is bounded by the ratio $a\leq  e(\psi^{\log \tau} U;U)/\tau$. Taking the limit $\tau\rightarrow0$ we obtain $c(H)\leq a\cdot 1 \leq p(U)$.
\end{proof}

\section{Estimating the invariant $C(U)$ in special cases.}\label{sec:the_invariant_C}
In this section we provide upper bounds for the invariant $C$ from Definition~\ref{def:ratio_invariant} on several classes of domains. The domains considered here are all topological balls, which means that their boundaries are aspherical, $\pi_2(S^{2n-1})=0$. As a consequence, the contact CZ index is independent of the choice of a capping disk, and we will use the notation $\indCZ_R(\gamma)$.
Let us start with the simplest example, a generic ellipsoid. 
\begin{exam}\label{exa:C_for_ellipsoid}
	Let $U=E(a_1,\dots,a_n)=\{z\in\C^{n}:\sum_{j=1}^n \frac{\pi|z_j|^2}{a_j} \leq 1\}$ be an ellipsoid such that $a_i/a_j$ is irrational when $i\neq j$. The periodic Reeb orbits on $\partial E$ are $\gamma_{k,\ell}:=\{e^{2\pi i t/a_k} z_k\}_{t\in[0,a_k\ell]}$ for $k=1,\dots,n$ and $\ell\in\N_{>0}$. The action (or period) of $\gamma_{k,\ell}$ is $a_k\ell$ and its CZ index (with respect to any capping disk $u_0\subset \partial E$) is 
	$$
	CZ_R(\gamma_{k,\ell})=n-1+2\sum_{j=1}^n\Big\lfloor\ell\cdot\frac{a_k}{a_j} \Big\rfloor,
	$$ 
	see e.g. \cite[p.16]{gutt2018symplectic}. A simple computation shows that in this case $C(E) = \min_j a_j=c_G(E)$. 
\end{exam}
 
Wider classes of examples that are considered below are convex and concave toric domains.  Since the Reeb flow on toric domains is degenerate (unless the domain is an ellipsoid), we need to extend Definition~\ref{def:ratio_invariant} of the invariant $C(U)$ to degenerate domains.
\begin{defin}\label{def:ratio_invariant_degenerate}
	Let $U\subset M$ be a domain with an incompressible contact type boundary.
	\begin{itemize}
		\item Assume that the Reeb flow on $\partial U$ is non-degenerate. For every $T>0$, define
		\begin{equation*}
		C_T(U) := \sup\left\{\frac{2\int_\gamma\lambda}{\indCZ_R(\gamma,u_0)-n+1}\ :\ \gamma\in\cP(U),\ \int_\gamma\lambda\leq T,\ u_0\subset \partial U\right\},
		\end{equation*}
		and notice that $C(U)=\sup_{T>0}C_T(U)$.
		\item When the Reeb flow on $\partial U$ is degenerate, we define
		\begin{eqnarray}
		\nonumber 
		C_T(U):=\liminf_{U'\rightarrow U} C_T(U'),\quad C(U):=\sup_{T>0} C_T(U),
		\end{eqnarray}
		where the limit is over domains $U'$ with incompressible contact type boundaries and non-degenerate Reeb flows that $C^1$-converge to $U$. 
	\end{itemize}   
\end{defin}
 
\subsection{Convex and concave toric domains.}
Denote by $\R^n_{\geq0}$ the set of points $x\in\R^n$ with non-negative entries, $x_i\geq 0$. Consider the moment map $\mu:\C^n\rightarrow\R^n_{\geq 0}$ defined by $\mu(z_1,\dots,z_n)=\pi(|z_1|^2,\dots,|z_n|^2)$. We say that  a domain $U\subset M$ with a contact type boundary is a  {\it toric domain} if it is symplectomorphic to $X_\Omega:=\mu^{-1}(\Omega)\subset \C^n$, for some domain $\Omega\subset \R^n_{\geq0}$ (that is, a connected set that is open in the topology of $\R^n_{\geq0}$). Moreover,
\begin{itemize}
 	 \item If $\hat \Omega:=\left\{(x_1,\dots, x_n)\in \R^n: (|x_1|,\dots,|x_n|)\in \Omega \right\}$ is convex, we say that $X_\Omega$ is a {\it convex toric domain}.
	\item If $\bar \Omega$ is compact and $\R^n_{\geq 0}\setminus\Omega$ is convex, we say that $X_\Omega$ is a {\it concave toric domain}.
\end{itemize}
In \cite{gutt2018symplectic}, Gutt and Hutchings estimate actions and Conley-Zehnder indices of periodic Reeb orbits for convex and concave toric domains, in order to compute the capacities $c_k$ coming from positive $S^1$-equivariant symplectic homology. Using their calculations, we give upper bounds for the invariant $C(U)$ from Definition~\ref{def:ratio_invariant} when $U$ is a concave or convex toric domain.
\begin{claim}\label{clm:concave_toric_C}
	Suppose that $U\subset M$ is a domain with an incompressible contact type boundary that is symplectomorphic to a concave toric domain. Then, $C(U)=c_G(U)$.
\end{claim}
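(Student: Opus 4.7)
The plan is to reduce the claim to Gutt and Hutchings' combinatorial calculation of Reeb actions and Conley-Zehnder indices on boundaries of concave toric domains, together with the well-known identification of the Gromov width of such a domain.

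First, by Definition~\ref{def:ratio_invariant_degenerate} it suffices to work with $C^1$-perturbations $X_{\Omega'}$ of $X_\Omega$ that are themselves concave toric domains with smooth boundary and non-degenerate Reeb flow, and then to take a limit. On such a $\partial X_{\Omega'}$, the analysis in \cite{gutt2018symplectic} parametrises the periodic Reeb orbits (up to iteration) by integer vectors $v\in\Z_{\geq 0}^n\setminus\{0\}$ recording the rotation numbers around each $\C$-factor: the action is $\int_{\gamma_v}\lambda=\langle v,w(v)\rangle$ for a point $w(v)\in\partial\Omega'$ determined by $v$, and the contact Conley-Zehnder index (taken with respect to a capping disk lying on the boundary) is $\indCZ_R(\gamma_v)=n-1+2(v_1+\cdots+v_n)+O(\delta)$, where $\delta$ measures the size of the perturbation.

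Substituting these formulas into Definition~\ref{def:ratio_invariant}, the ratio defining $C$ becomes $\langle v,w(v)\rangle/(v_1+\cdots+v_n)+O(\delta)$, a weighted average of the coordinates of $w(v)\in\partial\Omega'$. Using the convexity of $\R^n_{\geq 0}\setminus\Omega'$ together with the fact that each axis intercept $a_i'e_i$ lies in $\overline{\Omega'}$, a short geometric argument shows that this weighted average is bounded above by $\min_i a_i'$, with equality achieved by the simple orbit $\gamma_{e_{i^*}}$ lying in the $i^*$-th coordinate plane, where $i^*$ attains the minimum. Letting $\delta\to 0$ and $X_{\Omega'}\to X_\Omega$ yields $C(X_\Omega)=\min_i a_i$, where $a_ie_i=\partial\Omega\cap(\R_{\geq 0}\cdot e_i)$.

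It remains to identify $\min_i a_i$ with $c_G(X_\Omega)$, which is standard for concave toric domains. The lower bound comes from embedding $B(\min_i a_i)=E(\min_i a_i,\ldots,\min_i a_i)$, whose moment image is the simplex with vertices $(\min_i a_i)e_j$, which is contained in $\Omega$ by the convexity of $\R^n_{\geq 0}\setminus\Omega$. The matching upper bound follows from the monotonicity of the first Gutt-Hutchings capacity $c_1^{GH}$, whose value on $X_\Omega$ equals $\min_i a_i$ and dominates $c_G$. The main obstacle in the argument is to make sure the index-action formulas, and the convexity manipulation they feed into, pass cleanly to the degenerate limit of the Reeb flow on $\partial X_\Omega$; this is controlled by the $C^1$-continuity of the Gutt-Hutchings parametrisation in the defining domain, which is exactly why Definition~\ref{def:ratio_invariant_degenerate} is phrased via a liminf along smooth non-degenerate approximations.
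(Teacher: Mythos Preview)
Your overall strategy---reduce to the Gutt--Hutchings action/index formulas and bound the resulting ratio by the Gromov width---is the same as the paper's, but the formulas you invoke are the ones for \emph{convex} toric domains, not concave ones, and this breaks the argument. For concave toric domains, the relevant Reeb orbits (after perturbation) are parametrised by vectors $v\in\N^n_{>0}$ with \emph{strictly positive} entries, and the index satisfies
\[
1-n+2\textstyle\sum_i v_i \ \leq\ \indCZ_R(\gamma_v)\ \leq\ 2\textstyle\sum_i v_i,
\]
not $\indCZ_R(\gamma_v)=n-1+2\sum_i v_i$ as you wrote. To bound $C(U)$ from above one needs the \emph{lower} bound on the index, so the ratio to control is
\[
\frac{2\int_{\gamma_v}\lambda}{\indCZ_R(\gamma_v)-n+1}\ \leq\ \frac{[v]_\Omega}{\sum_i v_i -n+1},
\]
not the simpler $\langle v,w(v)\rangle/\sum_i v_i$ you obtained. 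The extra $-n+1$ in the denominator is exactly what makes the inequality delicate; it is handled in the paper by inscribing, for each $\hat v$, the ellipsoid $E=\mu^{-1}\{\langle x,\hat v\rangle<[\hat v]_\Omega\}\subset U$ and using the elementary fact $\sum_i\hat v_i-n+1\geq\max_i\hat v_i$ (valid precisely because all $\hat v_i\geq 1$) to get $\frac{[\hat v]_\Omega}{\sum_i\hat v_i-n+1}\leq c_G(E)\leq c_G(U)$.

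A related casualty of the wrong parametrisation is your claim that equality is achieved at $v=e_{i^*}$: for concave toric domains such standard-basis vectors are not in the Gutt--Hutchings index set $\N^n_{>0}$, so there is no orbit $\gamma_{e_{i^*}}$ to point to. The lower bound $C(U)\geq c_G(U)$ has to come from a separate argument (in the paper, from Lemma~\ref{lem:C_vs_ck} via the Gutt--Hutchings capacity $c_1$), which you do sketch correctly at the end.
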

\begin{claim}\label{clm:convex_toric_C}
	Suppose that $U\subset M$ is a domain with an incompressible contact type boundary that is symplectomorphic to a convex toric domain, $U\cong X_\Omega$. Then, $c_G(U)\leq C(U)\leq c_G(B)$, for every ball $B$ that contains $X_\Omega$.
\end{claim}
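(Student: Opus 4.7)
The plan is to use the explicit description, due to \cite{gutt2018symplectic}, of the periodic Reeb orbits on the boundary of a convex toric domain, and then estimate the ratio defining $C$ from both sides by elementary convex geometry. Since the Reeb flow on $\partial X_\Omega$ is degenerate whenever $\Omega$ is not an ellipsoid, I would invoke Definition~\ref{def:ratio_invariant_degenerate}: first smooth $\partial\Omega$ to a strictly convex curve and then perturb the contact form to obtain a non-degenerate convex toric approximation $X_{\Omega_\varepsilon}$, compute $C(X_{\Omega_\varepsilon})$, and take the liminf as $\varepsilon\to 0$.

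\textbf{Step 1 (Reeb data).} By the Gutt--Hutchings analysis of $\partial X_{\Omega_\varepsilon}$, the periodic Reeb orbits are organized into families indexed by a primitive vector $v\in\Z^n_{\geq 0}$ (recording the outward normal to the face of $\partial\Omega_\varepsilon$ supporting the orbit) and a multiplicity $m\geq 1$. For each such orbit $\gamma_{v,m}$, its action is approximately $m\,h_\Omega(v)$, where $h_\Omega(v):=\max_{w\in\Omega}\langle v,w\rangle$ is the support function of $\Omega$, and its contact Conley--Zehnder index, taken with respect to a capping disk in $\partial X_{\Omega_\varepsilon}$, satisfies
\[
\indCZ_R(\gamma_{v,m})\;\geq\; n-1+2m|v|,\qquad |v|:=v_1+\cdots+v_n.
\]
Consequently the ratio defining $C$ obeys
\[
\frac{2m\,h_\Omega(v)}{\indCZ_R(\gamma_{v,m})-n+1}\;\leq\;\frac{h_\Omega(v)}{|v|}.
\]
(As a sanity check, specializing to the ellipsoid $E(a_1,\dots,a_n)$ and $v=e_k$, $m=\ell$, this recovers the estimate underlying Example~\ref{exa:C_for_ellipsoid}.)

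\textbf{Step 2 (upper bound).} Let $B$ be a ball with $X_\Omega\subset B$ and set $c:=c_G(B)$, so that $B=X_{\Omega_B}$ with $\Omega_B=\{x\in\R^n_{\geq 0}:\sum_i x_i\leq c\}$. The containment $\Omega\subset\Omega_B$ forces $h_\Omega(v)\leq h_{\Omega_B}(v)$, and a direct linear-programming computation over the simplex gives $h_{\Omega_B}(v)=c\cdot\max_i v_i\leq c\,|v|$. Together with Step~1 this yields
\[
\frac{2m\,h_\Omega(v)}{\indCZ_R(\gamma_{v,m})-n+1}\;\leq\;\frac{c|v|}{|v|}\;=\;c
\]
for every $(v,m)$. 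Taking the supremum and then the liminf in Definition~\ref{def:ratio_invariant_degenerate} produces $C(X_\Omega)\leq c=c_G(B)$.

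\textbf{Step 3 (lower bound) and main obstacle.} For any $c<c_G(X_\Omega)$ the ball $B(c)$ embeds symplectically in $X_\Omega$; by the toric equivariant ball-embedding result from \cite{gutt2018symplectic}, this is equivalent to the simplex inclusion $\{x\in\R^n_{\geq 0}:\sum_i x_i\leq c\}\subset\Omega$. Plugging in each vertex $c\,e_i$ gives $h_\Omega(e_i)\geq c$ for every $i$. Applied to the simple orbit $\gamma_{e_1,1}$ from Step~1 (whose action is close to $h_\Omega(e_1)\geq c$ and whose CZ index is close to $n+1$), the corresponding ratio is at least $c$, so $C(X_\Omega)\geq c$. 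Letting $c\to c_G(X_\Omega)$ finishes the lower bound. The main technical difficulty is Step~1: pinning down the index inequality $\indCZ_R(\gamma_{v,m})\geq n-1+2m|v|$ after the two successive perturbations (smoothing $\partial\Omega$ and perturbing the Liouville form), reconciling the CZ conventions of \cite{gutt2018symplectic} with those of Section~\ref{sec:preliminaries}, and verifying that no orbits outside the Gutt--Hutchings list spoil the supremum. Once Step~1 is secured, Steps~2 and 3 are elementary consequences of convexity.
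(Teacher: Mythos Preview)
Your approach is close to the paper's but has a genuine error in Step~1. The Gutt--Hutchings index estimate for a Reeb orbit corresponding to $v\in\N^n$ on a perturbed convex toric domain is
\[
Z(v)+2\sum_i v_i \;\leq\; \indCZ_R(\gamma)\;\leq\; n-1+2\sum_i v_i,
\]
where $Z(v)$ is the number of zero entries of $v$. You have written the \emph{upper} bound $n-1+2m|v|$ as a lower bound. With the correct lower bound, Step~2 no longer gives the clean ratio $h_\Omega(v)/|v|$; instead one obtains
\[
\frac{2\|v\|_\Omega^*}{\indCZ_R(\gamma)-n+1}\;\leq\;\frac{\|v\|_\Omega^*}{\sum_i v_i-\tfrac{n-1-Z(v)}{2}},
\]
and the denominator can be strictly smaller than $|v|$ (take e.g.\ $v=(1,1,0,\dots,0)$). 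The paper's proof uses exactly this corrected ratio and then bounds $\|v\|_\Omega^*\leq(\sup_{w\in\Omega}\|w\|_\infty)\cdot\sum_i v_i$ to reach $c_G(B)$; your argument would need to be adjusted accordingly.

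Your Step~3 also differs from the paper and has a gap. Since $C(U)$ for a degenerate domain is defined as a $\liminf$ over \emph{all} non-degenerate $C^1$-approximations (Definition~\ref{def:ratio_invariant_degenerate}), exhibiting a single orbit with large ratio in the particular Gutt--Hutchings perturbation does not bound the liminf from below. The paper avoids this by proving the lower bound for arbitrary nice star-shaped domains via the capacities $c_k$ (Lemma~\ref{lem:C_vs_ck}): one has $C(U')\geq c_1(U')\geq c_G(U')$ for every non-degenerate $U'$, and then Hausdorff-continuity of $c_G$ passes the bound through the liminf. Your direct argument could be salvaged, but you would need to show the analogous orbit (with index $n+1$ and action close to $h_\Omega(e_1)$) persists in every sufficiently close non-degenerate approximation, not just the toric one.
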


The lower bounds in the above claims actually hold for general nice star-shaped domains, and follow from a comparison between the invariant $C(U)$ and certain invariants defined by Gutt and Hutchings in \cite{gutt2018symplectic}.
\begin{lemma}\label{lem:C_vs_ck}
	Let $U\subset \R^{2n}$ be a nice star shaped domain, then
	$
	C(U) \geq c_G(U).
	$
\end{lemma}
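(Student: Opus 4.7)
The strategy is to compare $C(U)$ with the first Gutt--Hutchings capacity $c_1^{GH}(U)$, defined via positive $S^1$-equivariant symplectic homology in \cite{gutt2018symplectic}. Because $c_1^{GH}$ is a normalized symplectic capacity (monotone under symplectic embeddings and satisfying $c_1^{GH}(B^{2n}(r)) = \pi r^2$), simple monotonicity applied to embedded balls yields
\begin{equation*}
c_G(U) \;=\; \sup\{\pi r^2 : B^{2n}(r)\hookrightarrow U\} \;\leq\; c_1^{GH}(U),
\end{equation*}
so the lemma reduces to the inequality $C(U) \geq c_1^{GH}(U)$.

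First I would handle the case in which the Reeb flow on $\partial U$ is non-degenerate. A key identification from \cite{gutt2018symplectic} expresses $c_1^{GH}(U)$ as the action of a contractible Reeb orbit $\gamma_0 \subset \partial U$ whose contact Conley--Zehnder index equals exactly $n+1$. Since $\partial U \subset \R^{2n}$ is simply connected, the index is capping-independent, and feeding $\gamma_0$ into the supremum in Definition~\ref{def:ratio_invariant} gives
\begin{equation*}
C(U) \;\geq\; \frac{2\int_{\gamma_0}\lambda}{\indCZ_R(\gamma_0) - n + 1} \;=\; \frac{2\, c_1^{GH}(U)}{2} \;=\; c_1^{GH}(U),
\end{equation*}
which closes the non-degenerate case when combined with the previous paragraph.

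For a possibly degenerate $U$, I would approximate by nice star-shaped domains $U_m \to U$ with non-degenerate Reeb flows (via a small perturbation of the Liouville vector field as in Section~\ref{subsec:preliminaries_Liouville_coord_and_Reeb_flow}), apply the previous step to each $U_m$, and pass to the liminf using Definition~\ref{def:ratio_invariant_degenerate}. This relies on continuity of both $c_1^{GH}$ and $c_G$ under $C^1$-perturbations of the boundary, and on choosing the action window $T$ uniformly larger than the (uniformly bounded) quantities $c_1^{GH}(U_m)$, so that the Reeb orbit realizing $c_1^{GH}(U_m)$ lies within the relevant truncation $C_T$.

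The main obstacle is the reference step: verifying that the description of $c_1^{GH}(U)$ by a Reeb orbit of contact CZ index exactly $n+1$ is established in \cite{gutt2018symplectic} for every nice star-shaped domain, rather than only for convex or dynamically convex ones where it is classical. Once this identification is in hand, the remainder of the argument is a direct manipulation of the definitions together with the monotonicity and normalization of the Gutt--Hutchings capacity.
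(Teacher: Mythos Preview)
Your proposal is correct and follows essentially the same route as the paper: both arguments reduce to $C(U)\geq c_1^{GH}(U)\geq c_G(U)$, using that for a non-degenerate nice star-shaped domain $c_k^{GH}(U)$ is the action of a Reeb orbit of contact CZ index $2k+n-1$ (so the ratio in Definition~\ref{def:ratio_invariant} equals $c_k^{GH}(U)/k$), and then handle the degenerate case by approximation and continuity. Your stated ``main obstacle'' is not actually an obstacle: the Reeb-orbit/index description of $c_k^{GH}$ is part of \cite[Theorem~1.1]{gutt2018symplectic} for all non-degenerate nice star-shaped domains, not only for convex ones, so the reference step goes through as you wrote it.
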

\begin{proof}
	The proof relies on the capacities $c_k$ coming from positive $S^1$-equivariant symplectic homology\footnote{Alternatively, one can use the capacity coming from symplectic homology \cite{floer1994applications}. Similar arguments imply that $C(U)$ is greater or equal to this capacity as well.}, which were introduced by Gutt and Hutchings in \cite{gutt2018symplectic}. These are numerical invariants of nice star-shaped domains which admit several useful properties. Let us state the properties that will be of use for us (see \cite[Theorem 1.1]{gutt2018symplectic}):
	\begin{enumerate}
		\item \label{itm:prop_1_GH_cap} $c_1(U)\geq c_G(U)$ for every nice star-shaped domain $U$.
		\item $c_k$ are continuous with respect to the Hausdorff metric.
		\item For a non-degenerate nice star-shaped domain $U$ and $k\in \N$, $c_k(U)$ belongs to the action spectrum of Reeb orbits with contact Conley-Zehnder index equal to $2k+n-1$.
	\end{enumerate} 
	We remark that property \ref{itm:prop_1_GH_cap} follows directly from the monotonicity and normalization properties stated in  \cite[Theorem 1.1]{gutt2018symplectic}. 
	We conclude that for every non-degenerate nice star-shaped domain,
	$C(U)\geq \sup_k c_k(U)/k \geq c_1(U)\geq c_G(U)$. For degenerate domains, the inequality $C(U)\geq  c_G(U)$ now follows from the fact that $c_G$ is continuous in the Hausdorff metric.
\end{proof}

Having the established the lower bound, it remains to bound the invariant $C$ from above. Let us start with convex toric domains.
\begin{proof}[Proof of Claim~\ref{clm:convex_toric_C}]
	By Definition~\ref{def:ratio_invariant_degenerate}, it is enough to prove the upper bound for $C_T(U)$, for arbitrary $T>0$.
	In \cite[p.18-20]{gutt2018symplectic}, Gutt and Hutchings show that one can perturb any convex toric domain $U$ into a domain with a contact type boundary and non-degenerate Reeb flow, such that, after the perturbation, every Reeb orbit $\gamma\in\cP(\partial U)$
	whose action is not greater than $T$ corresponds to a vector with non-negative integer entries $v\in\N^n$ (here $\N$ denotes the set of natural numbers including zero) satisfying 
	\begin{equation}\label{eq:GH_action_index-convex}
	\int_\gamma\lambda  \approx \|v\|_\Omega^* \quad \text{and}\quad 
	Z(v)+2\sum_{i=1}^n v_i \leq \indCZ_R(\gamma) \leq n-1+ 2\sum_{i=1}^n v_i,
	\end{equation}
	where $\|v\|_\Omega^*:= \sup\{\left<v,w\right>:w\in\Omega\}$ is the support function associated to $\Omega$ and $Z(v)$ is the number of elements of $v$ that are equal to zero.
	As a consequence,
	\begin{equation}\label{eq:C_T_using_v}
	C_T(U) \leq \sup \left\{\frac{\|v\|_\Omega^*+\varepsilon}{\sum_{i=1}^{n}v_i -\frac{n-1-Z(v)}{2}} \ \Big|\  v\in\N^n\right\},
	\end{equation}
	where $\varepsilon>0$ is arbitrarily small.
	Let us show that the right hand side of (\ref{eq:C_T_using_v}) is bounded, up to $\varepsilon$, by $\sup_{w\in\Omega} \|w\|_{\infty}$. Indeed,
	\begin{eqnarray}
	\frac{\|v\|_\Omega^*}{\sum_{i=1}^{n}v_i -\frac{n-1-Z(v)}{2}}&=& \sup_{w\in\Omega} \frac{\left<v,w\right>}{\sum_{i=1}^{n}v_i -\frac{n-1-Z(v)}{2}} = \sup_{w\in\Omega} \frac{\sum_{i=1}^n v_iw_i}{\sum_{i=1}^{n}v_i -\frac{n-1-Z(v)}{2}}
	\nonumber\\
	&\leq& \sup_{w\in\Omega}\  (\max_{i} w_i)\cdot  \frac{\sum_{i=1}^n v_i}{\sum_{i=1}^{n}v_i -\frac{n-1-Z(v)}{2}} \leq \sup_{w\in\Omega} \|w\|_\infty.
	\label{eq:C_T_bdd_by_infty_norm}
	\end{eqnarray}
	Let $B=B(a)\subset \R^{2n}$ be a ball of capacity $a$ that contains $X_\Omega$, then $\mu(B)\supset \Omega$. As a consequence, $\sup_{w\in\Omega} \|w\|_\infty \leq \sup_{w\in\mu(B)} \|w\|_\infty = a$.
\end{proof}
\begin{rem}
	In fact we proved a stronger upper bound for the invariant $C(U)$ than the one stated in Claim~\ref{clm:convex_toric_C}. Inequality (\ref{eq:C_T_bdd_by_infty_norm}) implies that $C(X_\Omega)\leq \sup_{w\in\Omega} \|w\|_\infty$. In particular, the invariant $C(X_\Omega)$ can be bounded by the minimal $a$ such that the polydisk $D(a)^n$ contains $X_\Omega$.
\end{rem}

\begin{figure}
	\centering
	\includegraphics[scale=0.5]{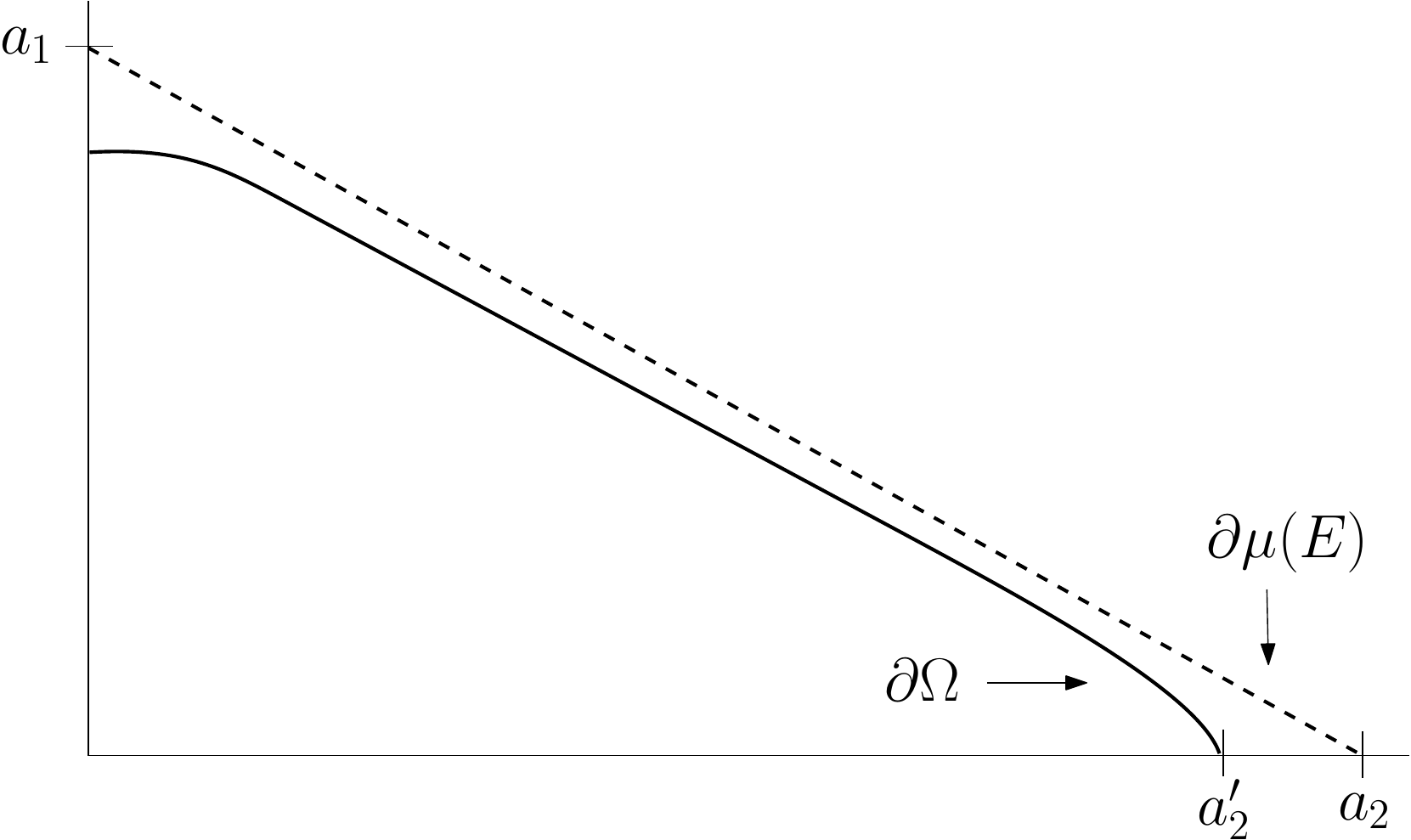}
	\caption{\small{The dashed line is the image of the boundary of $E$ under the moment map and the solid line is $\partial\Omega$, which intersects the axes at almost right angles.}}
	\label{fig:C_not_cont}
\end{figure}

The following example shows that invariant $C$ is not continuous with respect to the Hausdorff metric and is not monotone with respect to inclusion. 
\begin{example}\label{exa:C_not_cont}
	Given an ellipsoid $E=E(a_2,a_1)\subset \C^2$ with $a_1<a_2$, then it is a convex toric domain. As mentioned in the proof of Claim~\ref{clm:convex_toric_C}, Gutt and Hutchings showed in \cite[p.18-20]{gutt2018symplectic} that there exists a perturbation of $E(a_2,a_1)$ for which the periodic Reeb orbit correspond to vectors $v\in \N^2$ whose actions and indices are given by (\ref{eq:GH_action_index-convex}). Let us describe this perturbation. Let $X_\Omega$ be a convex toric domain such that $\Omega$ is Hausdorff-close to $\mu(E)$, and such that the curve $\partial \Omega$ is almost perpendicular to the axes, see Figure~\ref{fig:C_not_cont}.
	Denote by $a_2'$ the intersection point of $\partial \Omega$ with the $x$-axis and note that $a_2'$ is close to $a_2$ and, in particular, is greater than $a_1$. The next step is to perturb $X_\Omega$ into a non-degenerate domain. After this perturbation there exists a periodic Reeb orbit $\gamma$ corresponding to $v=(1,0)\in\N^2$. Recalling (\ref{eq:GH_action_index-convex}), the action of $\gamma$ is approximately $\|v\|_{\Omega}^*=a_2'$ and its index is $\indCZ_R(\gamma)=1+2=3$.
	As a result, we obtain a domain $U$ with a contact type boundary and non-degenerate flow, that is contained in $E$ and is Hausdorff-close to it, such that  
	$$
	C(U)\geq \frac{2\int_\gamma\lambda}{\indCZ_R(\gamma)-n+1} \approx\frac{2a_2'}{3-2+1} = a_2'\approx a_2.
	$$ 
	By Example~\ref{exa:C_for_ellipsoid}, $C(E)=a_1$ and we conclude that $C$ is neither monotone nor  Hausdorff-continuous.
\end{example}

We turn to prove Claim~\ref{clm:concave_toric_C}.
\begin{proof}[Proof of Claim~\ref{clm:concave_toric_C}]
	By Definition~\ref{def:ratio_invariant_degenerate}, it is enough to prove the upper bound for $C_T(U)$, for arbitrary $T>0$. 
	In \cite[p.22-23]{gutt2018symplectic}, Gutt and Hutchings show that one can perturb $U$ into a domain with a contact type boundary and non-degenerate flow, such that, after the perturbation, every Reeb orbit  $\gamma\in\cP(\partial U)$ either has a large contact CZ index (in which case, either the action is larger than $T$, or the ratio in the definition of $C(U)$ is small) or corresponds to a vector $v\in\N^n_{>0}$, such that 
	\begin{equation*}
	\int_\gamma\lambda  \leq [v]_\Omega+\varepsilon \quad \text{and}\quad 
	1-n+2\sum_{i=1}^n v_i \leq \indCZ_R(\gamma) \leq 2\sum_{i=1}^n v_i,
	\end{equation*}
	where $[v]_\Omega:=\inf\{\left<v,w\right>:w\in\R_{\geq0}^n\setminus\Omega\}$ and $\varepsilon>0$ is arbitrarily small. 
	As a consequence, 
	\begin{equation}
	C_T(U) \leq \sup \left\{\frac{[v]_\Omega +\varepsilon}{\sum_{i=1}^{n}v_i -n+1} \ \Big|\  v\in\N^n_{>0}\right\}.
	\end{equation}
	Fix $\hat v\in \N^n_{>0}$ such that $C_T(U)\leq \frac{[\hat v]_\Omega}{\sum_{i=1}^{n}\hat v_i -n+1}+2\varepsilon$. Consider the ellipsoid defined by 
	$$
	E:=\mu^{-1}\left(\{x\in\R^n_{\geq0}:\left<x,\hat v\right>< [\hat v]_\Omega\}\right), 
	$$
	then $E\subset U$ and $[\hat v]_{\mu(E)} = [\hat v]_\Omega$. Let $0<a_1\leq \cdots\leq a_n$ such that $E=E(a_1,\dots, a_n)$, then
	\begin{eqnarray*}
		C_T(U)&\leq&  \frac{[\hat v]_\Omega}{\sum_{i=1}^{n}\hat v_i -n+1} +2\varepsilon=
		\frac{[\hat v]_{\mu(E)}}{\sum_{i=1}^{n}\hat v_i -n+1} +2\varepsilon\\
		&=& \frac{\min_i a_i\hat v_i}{\sum_{i=1}^{n}\hat v_i -n+1} +2\varepsilon \leq \frac{\min_i a_i\hat v_i}{\max_i \hat v_i} +2\varepsilon \leq a_1+2\varepsilon \\
		& =& c_G(E) +2\varepsilon \leq c_G(U)+2\varepsilon .
	\end{eqnarray*}
\end{proof}

\subsection{Strictly convex domains: comparison to Ishikawa's constant.}
	In this section, we show that when $U$ is strictly convex, $C(U)<\infty$. This follows from computations carried by Ishikawa in \cite{ishikawa2015spectral}. On strictly convex domains, Ishikawa bounded the ratio between the action and CZ index of 1-periodic orbits of radial Hamiltonians in terms of the sectional curvatures of the boundary.
	\begin{defin}[{\cite[Definition 4.1]{ishikawa2015spectral}}] \label{def:ishikawas_invariant}
		Let $U\subset \R^{2n}$ be a strictly convex open set with a smooth boundary, such that $0\in U$. Denote by $f:\R^{2n}\rightarrow\R$ the squared semi-norm associated to $U$, namely $f(ty)=t^2$ for $y\in\partial U$. 
		Define 
		$$
		\hat C_0(U):=\inf_{V\subset \R^{2n}, a>0}\frac{2\pi}{a}
		$$
		where the infimum is taken over all one-dimensional complex subspaces $V\subset \C^n\cong \R^{2n}$ and $a>0$ such that $D^2f(x)>a|_{V\times V}\oplus 0|_{V^\perp\times V^\perp}$, for every $x\in \R^{2n}\setminus\{0\}$.
	\end{defin}  
	Roughly speaking, 
	$\hat C_0(U)$ is $2\pi$ over the maximum over all complex planes of the minimal sectional curvature in $\partial U$, restricted to the plane. In particular, it is finite for every strictly convex domain. 
	\begin{claim}
		Let $U\subset \R^{2n}$ be a strictly convex open set with a smooth boundary, such that $0\in U$. Then, $C(U)\leq \hat C_0(U)$.
	\end{claim}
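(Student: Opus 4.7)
The strategy is to translate the defining supremum of $C(U)$ into a lower bound for the Conley--Zehnder indices of periodic Reeb orbits on $\partial U$, and then invoke the index computation carried out by Ishikawa in \cite{ishikawa2015spectral}. Fix a contractible periodic Reeb orbit $\gamma\in\cP(\partial U)$ of action $T=\int_\gamma\lambda$ and a capping disk $u_0\subset\partial U$. The inequality $C(U)\leq \hat C_0(U)$ is, after taking suprema, equivalent to the pointwise estimate $\indCZ_R(\gamma,u_0)\geq n-1+2T/\hat C_0(U)$, which is what I would aim to establish.

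To produce this estimate I would first realize $\gamma$ as a non-constant 1-periodic orbit of a radial Hamiltonian. With $s=\sqrt{f}$ serving as a Liouville coordinate (so that $\partial U=\{s=1\}$), choose $\chi_\delta\fcolon\R\to\R$ with $\chi_\delta'(1)=T$ and $\chi_\delta''(1)=\delta>0$ small, and set $H_\delta:=\chi_\delta(s)$. By Lemma~\ref{lem:radial_Hamiltonians1} the loop $\gamma$ is (after reparametrization) a 1-periodic orbit of $H_\delta$ at the level $s=1$, and by Lemma~\ref{lem:radial_Hamiltonians2} the induced Robbin--Salamon index satisfies
\begin{equation*}
\indRS(\gamma_{H_\delta},u_0)=\indCZ_R(\gamma,u_0)+\tfrac{1}{2}.
\end{equation*}

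Next I would invoke Ishikawa's index computation from \cite{ishikawa2015spectral}. By Definition~\ref{def:ishikawas_invariant}, for every $a<2\pi/\hat C_0(U)$ there is a complex line $V\subset\C^n$ along which $D^2 f > a\cdot P_V\oplus 0\cdot P_{V^\perp}$ at every nonzero point. Ishikawa shows that such a Hessian bound forces the linearized Hamiltonian flow of $H_\delta$ along $\gamma_{H_\delta}$ to rotate at rate at least $aT$ in the direction of $V$, while the restriction to $V^\perp$ is a degenerate flow that contributes to the RS index at least $-(n-1)/2$. Combining these two contributions via the product property of the RS index (exactly as in the proof of Lemma~\ref{lem:radial_Hamiltonians2}) yields
\begin{equation*}
\indRS(\gamma_{H_\delta},u_0)\;\geq\;\frac{aT}{\pi}-\frac{n-1}{2}+O(\delta).
\end{equation*}
Letting $a\nearrow 2\pi/\hat C_0(U)$ and $\delta\to 0$, and substituting the identity from the previous paragraph, gives the required pointwise estimate $\indCZ_R(\gamma,u_0)\geq n-1+2T/\hat C_0(U)$. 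Taking the supremum over all capped Reeb orbits completes the proof.

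The main obstacle is the clean extraction of the purely linear-algebraic core of Ishikawa's argument: his estimate in \cite{ishikawa2015spectral} is packaged as an upper bound on spectral invariants of radial Hamiltonians, so the work lies in re-reading his proof to isolate the Hessian-to-rotation-rate step and recasting it as a direct lower bound on the contact Conley--Zehnder index of a Reeb orbit. Once this extraction is done, the $\tfrac12\sign(\chi'')$ correction from Lemma~\ref{lem:radial_Hamiltonians2} is harmless in the limit $\delta\to 0$, and no further analytic input is required.
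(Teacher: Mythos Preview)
Your overall strategy coincides with the paper's: realize the Reeb orbit as a periodic orbit of a radial Hamiltonian, convert the contact CZ index into a Hamiltonian RS index via Lemma~\ref{lem:radial_Hamiltonians2}, and then appeal to Ishikawa's Hessian--to--index estimate. The paper makes the cosmetic choice $\chi'=-\alpha<0$, $\chi''=0$ (so that $\indRS(\gamma,u_0)=-\indCZ_R(\hat\gamma)$ with no $\tfrac12$ correction) and cites Ishikawa's Lemma~3.1(ii) as a black box:
\[
\dot\Phi=J_0S\Phi,\quad S\leq -c\,\big|_{V\times V}\oplus 0\big|_{V^\perp\times V^\perp}\ \Longrightarrow\ \indRS(\Phi)\leq -n-2[c/2\pi]^<-1,
\]
which yields $\indCZ_R(\hat\gamma)\geq n+1+2[\alpha/\hat C_0(U)]^<$ and hence the claim after the substitution $[x]^<\geq x-1$.

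There is, however, a genuine gap in your execution of the Ishikawa step. First, the product property of the RS index you invoke requires the linearized flow to \emph{respect} the splitting $V\oplus V^\perp$, but $S$ is not block-diagonal for this decomposition; Ishikawa's actual argument is not a direct-sum computation and this is exactly the ``clean extraction'' you flag as the obstacle. Second, and more seriously, even granting your displayed inequality $\indRS\geq aT/\pi-(n-1)/2$, the arithmetic does not close: combined with $\indRS=\indCZ_R+\tfrac12$ and $a\nearrow 2\pi/\hat C_0(U)$ it only gives $\indCZ_R\geq 2T/\hat C_0(U)-n/2$, which falls short of your stated target $\indCZ_R\geq n-1+2T/\hat C_0(U)$ by $3n/2-1$. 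The full strength of Ishikawa's lemma---specifically the additive constant $-n-1$ in the displayed bound above, not merely the rotation term on $V$---is needed to obtain the correct inequality.
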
 
	\begin{proof}
	Let $\hat \gamma\in\cP(\partial U)$ be a periodic Reeb orbit and denote its action by $\alpha:=\int_{\hat \gamma}\lambda>0$. In what follows we show that the ratio $2\int_{\hat \gamma}\lambda/\left(\indCZ_R(\hat \gamma)-n+1\right)$ in the definition of $C(U)$ is bounded  by $\hat C_0(U)$.
	The argument goes through radial Hamiltonians in order to use a result from \cite{ishikawa2015spectral}.
	Consider a radial Hamiltonian $H$ that is linear with slope $-\alpha$ with respect to the Liouville coordinate near the boundary of $U$, i.e., $H|_{\cN(\partial U)}=-\alpha\cdot s = -\alpha\cdot f$, where $f$ is the squared semi-norm associated to $U$, as in Definition~\ref{def:ishikawas_invariant}. Lemma~\ref{lem:radial_Hamiltonians1} states that $\gamma(t)=\hat\gamma(\chi'(s)\cdot t)=\hat\gamma(-\alpha\cdot t)$ is a 1-periodic orbit of the Hamiltonian flow of $H$.
	The linearized flow $\Phi(t):=d\varphi_H^t(\gamma(0))$ of $H$ along $\gamma$ satisfies the equation 
	$$
	\frac{d}{dt}\Phi(t) = J_0 D^2H(\gamma(t))\cdot \Phi(t)= -\alpha J_0 D^2f(\gamma(t))\cdot\Phi(t).
	$$  
	Lemma 3.1, item (ii) from \cite{ishikawa2015spectral} states that, given a complex plane $V\subset \C^n$ and a path $\{\Phi(t)\}$ of symplectic matrices satisfying $\frac{d}{dt}\Phi(t) = J_0S(t)\Phi(t)$ for $S(t)\leq -c|_{V\times V}\oplus 0|_{V^\perp\times V^\perp}$,  its RS index is bounded by\footnote{Lemma 3.1 from \cite{ishikawa2015spectral} is formulated for the $\max\indCZ$, which was defined by Ishikawa on p.8. It follows from (\ref{eq:RS_after_pert}) that in our case $\max\indCZ(\gamma,u_0)=\indRS(\gamma,u_0)+1$.}
	$$
	RS(\Phi)\leq -n-2[c/2\pi]^<-1,
	$$
	where $[x]^<$ stands for the largest integer that is smaller than $x$. 
	Applying this statement to $S(t) = -\alpha D^2f (\gamma(t))$ and $c=\alpha\cdot a$, it follows that the RS index of the orbit $\gamma$, with respect a capping disk $u_0\subset \{s=s(\gamma)\}$ is bounded by
	\begin{equation}
	\indRS(\gamma,u_0)\leq-n -2\Big[\frac{\alpha\cdot a}{2\pi}\Big]^<-1\leq   -n -2\Big[\frac{\alpha}{\hat C_0(U)}\Big]^<-1.\label{eq:Ishikawa_CZ_hatC0} 
	\end{equation} 
	By Lemma~\ref{lem:radial_Hamiltonians2}, the RS index of $(\gamma,u_0)$ is equal to minus the contact CZ index of $\hat\gamma$. Therefore, inequality (\ref{eq:Ishikawa_CZ_hatC0}) yields
	\begin{equation*}
	\indCZ_R(\hat \gamma) \geq n+2\Big[\frac{\alpha}{\hat C_0(U)}\Big]^<+1= n+2\Big[\frac{\int_{\hat{\gamma}}\lambda}{\hat C_0(U)}\Big]^<+1.
	\end{equation*}
	As the above inequality holds for every Reeb orbit, we can use it to bound the invariant $C(U)$ as follows:
	\begin{eqnarray*}
		C( U) &=& \sup_{\hat\gamma} \frac{2\int_{\hat \gamma}\lambda}{ \indCZ_R(\hat \gamma,\hat u_0)-n+1}
		\leq \sup_{\hat\gamma} \frac{2\int_{\hat \gamma}\lambda}{n+2\Big[\frac{\int_{\hat{\gamma}}\lambda}{\hat C_0(U)}\Big]^<+1-n+1}\\
		&\leq& \sup_{\hat\gamma} \frac{2\int_{\hat \gamma}\lambda}{2\cdot \frac{\int_{\hat{\gamma}}\lambda}{\hat C_0(U)}-2 +2} = \hat C_0(U),
\end{eqnarray*}
	where the last inequality follows from the fact that $[x]^<\geq x-1$.
\end{proof}

\bibliographystyle{plain}
\bibliography{refs}

\paragraph{Shira Tanny,}$ $\\
School of Mathematical Sciences\\ 
Tel Aviv University \\
Ramat Aviv, Tel Aviv 69978\\
Israel\\
E-mail: tanny.shira@gmail.com

\end{document}